\numberwithin{equation}{section}
\theoremstyle{plain}
\newtheorem{Th}{Theorem}[section]
\newtheorem{Lemma}[Th]{Lemma}
\newtheorem{Cor}[Th]{Corollary}
\newtheorem{Prop}[Th]{Proposition}
\theoremstyle{definition}
\newtheorem{Rem}[Th]{Remark}
\newtheorem{?}[Th]{Problem}
\newtheorem{Ex}[Th]{Example}
\newcommand{\Aut}{\textrm{Aut}}
\newcommand{\Gal}{\textrm{Gal}}
\newcommand{\Q}{\mathbb{Q}}
\newcommand{\Z}{\mathbb{Z}}
\newcommand{\R}{\mathbb{R}}
\newcommand{\C}{\mathbb{C}}
\renewcommand{\P}{\mathbb{P}}
\newcommand{\ov}{\overline}
\newcommand{\mf}{\mathfrak}
\renewcommand{\phi}{\varphi}
\newcommand{\lan}{\langle}
\newcommand{\ran}{\rangle}
\newcommand{\mc}{\mathcal}
\newcommand{\wt}{\widetilde}
\newcommand{\spec}{\text{Spec}}
\newcommand{\im}{\operatorname{im}}
\begin{document}

\title{Distribution of the successive minima of the Petersson norm on cusp forms}
\author{Souparna Purohit}
\date{\today}

\maketitle 

\begin{abstract}
Let $\Gamma \subseteq \text{PSL}_2(\Z)$ be a finite index subgroup. Let $\mathscr{X}(\Gamma)$ be a regular proper model of the modular curve associated with $\Gamma$, and let $\ov{\mathscr{L}}^{\otimes k}$ be the logarithmically singular metrized line bundle on $\mathscr{X}(\Gamma)$ associated to modular forms of level $\Gamma$ and weight $12k$, endowed with the Petersson metric. For each $k \geq 1$, the sub-lattice $\mathscr{S}_k \subseteq H^0(\mathscr{X}(\Gamma), \mathscr{L}^{\otimes k})$ of integral cusp forms of level $\Gamma$ and weight $12k$ is a euclidean lattice with respect to the Petersson norm. In this paper, we describe the distribution of the successive minima of the $\mathscr{S}_k$ as $k \to \infty$, generalizing the work of Chinburg, Guignard, and Soul\'{e} in \cite{TGS}, which addressed the case $\Gamma = \text{PSL}_2(\Z)$. 
\end{abstract}

\setcounter{tocdepth}{1}
\tableofcontents

\section{Introduction}

Let $\mathscr{X}$ be an arithmetic variety (integral, flat, projective scheme) over $\spec(\Z)$ with smooth generic fiber. Let $\ov{\mathscr{L}}$ be a possibly singular hermitian line bundle on $\mathscr{X}$, and for every integer $k \geq 1$, let $\ov{\mathscr{L}}^{\otimes k}$ denote $\mathscr{L}^{\otimes k}$ endowed with the product metric. Let $\| \cdot \|_{k, \infty}$ denote the $L^2$ norm on the global sections $H^0(\mathscr{X}_\C, \mathscr{L}_\C^{\otimes k}) := H^0(\mathscr{X}, \mathscr{L}^{\otimes k}) \otimes_\Z \C$ with respect to a fixed smooth volume form on $\mathscr{X}(\C)$ invariant under complex conjugation. The finite free $\Z$-modules $H^0(\mathscr{X}, \mathscr{L}^{\otimes k})$ endowed with $\| \cdot \|_{k, \infty}$ are euclidean lattices.  

A basic set of invariants of euclidean lattices are their successive minima. Recall that for $i = 1, \dots, d_k$, where $d_k := \text{rk}_\Z \ H^0(\mathscr{X}, \mathscr{L}^{\otimes k})$, the \textit{$i$th successive minima}, $\mu_{k,i}$, is the smallest real number $\mu$ such that there exist vectors $s_1, \dots, s_i \in H^0(\mathscr{X}, \mathscr{L}^{\otimes k})$ that are linearly independent in $H^0(\mathscr{X}, \mathscr{L}^{\otimes k}) \otimes_\Z \Q$ and such that $\| s_j \|_{k, \infty} \leq \mu$ for all $j$. The study of successive minima has a long history, beginning with Minkowski's work on the geometry of numbers, to more recent works in Arakelov theory. From the perspective of Arakelov theory, it turns out to be more natural to work with $\lambda_{k,i} := -\log(\mu_{k,i})$, called the \textit{successive maxima}. 

We are interested in the asymptotic distribution of the successive maxima of $H^0(\mathscr{X}, \mathscr{L}^{\otimes k})$ as $k \to \infty$. When the metric on $\ov{\mathscr{L}}$ is smooth, or even continuous, the work of Chen in \cite{Chen-1} shows the existence of a limiting distribution associated to the successive maxima of the sequence of lattices $H^0(\mathscr{X}, \mathscr{L}^{\otimes k})$. More precisely, if 
\[
\nu_k := \frac{1}{d_k} \sum_{i=1}^{d_k} \delta_{\frac{1}{k} \lambda_{k,i}}, 
\]
where $\delta_x$ is the Dirac delta function at a real number $x$, then Chen's results (in particular, theorem 4.1.8 of \cite{Chen-1}) show that the sequence of probability measures $(\nu_k)_k$ on $\R$ converges weakly to a \textit{compactly supported} probability measure on $\R$. 

In \cite{TGS}, Chinburg, Guignard, and Soul\'{e} prove an analogous result when the metric on $\ov{\mathscr{L}}$ is \textit{logarithmically singular} (see \cite{Kuhn} definition 3.1, and \cite{BKK} chapter 7). In particular, they consider $\mathscr{X} = \mathscr{X}(1)$ an integral model of the modular curve $X(1)_\C$ for $\text{PSL}_2(\Z)$, and $\ov{\mathscr{L}}$ the line bundle of integral modular forms for the group $\text{PSL}_2(\Z)$ of weight $12$ with the Petersson metric (see \S 3 of \cite{TGS}). As shown by K\"{u}hn in proposition 4.9 of \cite{Kuhn}, the Petersson metric on the line bundle of modular forms is logarithmically singular along the cusps and elliptic points. Due to these singularities, the $L^2$ norms, i.e. the Petersson norms, are no longer defined on the spaces $H^0(\mathscr{X}(1)_\C, \mathscr{L}_\C^{\otimes k})$ of all modular forms of weight $12k$. But they are defined on the subspaces $H^0(\mathscr{X}(1)_\C, \mathscr{L}_\C^{\otimes k}(-\infty))$ of cusp forms (here $\infty \in \mathscr{X}(1)_\C$ denotes the unique cusp). In this setting, theorem 3.2.2 of \cite{TGS} describes the asymptotic behavior of the successive maxima of the lattices of integral cusp forms for the group $\text{PSL}_2(\Z)$ of weight $12k$ with the Petersson norm as $k \to \infty$, in close analogy to Chen's result mentioned above. 

In this paper, we generalize theorem 3.2.2 of \cite{TGS} to cusp forms of any finite index subgroup $\Gamma \subseteq \text{PSL}_2(\Z)$. We state our results precisely in the following sub-section. 

\subsection{Statement of main results}
\label{statement-of-main-results-intro}
We follow the setup in \S 4.11 of \cite{Kuhn}. Let $\Gamma(1) := \text{PSL}_2(\Z)$, and identify the complex modular curve $X(1)_\C$ associated to $\Gamma(1)$ with $\P_\C^1$ via the modular $j$-function (see example \ref{j-function-example}). Let $\Gamma \subseteq \Gamma(1)$ be a finite index subgroup. The modular curve $X(\Gamma)_\C$ and the natural map $\pi_{\Gamma, \C} : X(\Gamma)_\C \to \P_\C^1$ are defined over a number field $E$ (see \S \ref{integral-models-ss}). Let $X(\Gamma)$ be a (smooth projective geometrically connected) model of $X(\Gamma)_\C$ over $E$, let $\pi_{\Gamma, E} : X(\Gamma) \to \P_E^1$ be the model of $\pi_{\Gamma, \C}$ over $E$, and let $\pi_\Gamma : X(\Gamma) \to \P_\Q^1$ denote the composition of $\pi_{\Gamma, E}$ with the natural map from $\P_E^1$ to $\P_\Q^1$. Let $\infty \in \P_\Q^1$ correspond to the unique pole of the $j$-function, and let $D \in \text{Div}(X(\Gamma))$ denote the sum of the points in $\pi_{\Gamma}^{-1}(\infty)$. We call $D$ the \textit{divisor of cusps of $X(\Gamma)$}.  

Let $\mathscr{X}(\Gamma)$ be \textit{an arithmetic surface associated to $\Gamma$} (see \S \ref{integral-models-ss} and \S 4.11 of \cite{Kuhn}). Then $\mathscr{X}(\Gamma)$ is a regular projective arithmetic surface with generic fiber $X(\Gamma)$, and comes with a morphism $\pi_{\Gamma, \Z} : \mathscr{X}(\Gamma) \to \P_\Z^1$ extending $\pi_\Gamma$. Let $\ov{\mathscr{L}}$ denote the metrized line bundle on $\mathscr{X}(\Gamma)$ associated to modular forms of level $\Gamma$ and weight $12$ endowed with the Petersson metric (see \S \ref{integral-models-ss} and \S \ref{rational-Pet-met-ss}, and \S 4.12 of \cite{Kuhn}). As mentioned above, this metric has logarithmic singularities at elliptic points and cusps (proposition 4.9 in \cite{Kuhn}). 

\begin{Th}
\label{main-theorem-statement-intro}
Let $\Gamma \subseteq \Gamma(1)$ be a finite index subgroup. Let $\mathscr{X}(\Gamma)$ be an arithmetic surface associated to $\Gamma$, $X(\Gamma)$ its generic fiber, and let $D$ denote the divisor of cusps of $X(\Gamma)$. Let $\ov{\mathscr{L}}$ be the line bundle on $\mathscr{X}(\Gamma)$ associated to modular forms of level $\Gamma$ and weight $12$ endowed with the Petersson metric. For every $k \geq 1$, let
\[
\mathscr{S}_k := H^0(\mathscr{X}(\Gamma), \mathscr{L}^{\otimes k}) \cap H^0(X(\Gamma), \mathscr{L}_\Q^{\otimes k}(-D)) 
\]
denote the euclidean lattice of integral cusp forms of level $\Gamma$ and weight $12k$ with respect to the Petersson inner product. Let $\mu_{k,i}$ denote the successive minima of $\mathscr{S}_k$, and let $\lambda_{k,i} := -\log(\mu_{k,i})$ denote the successive maxima. Let $d_k := \emph{rk}_\Z \mathscr{S}_k$, and let 
\[
\nu_k := \frac{1}{d_k} \sum_{i=1}^{d_k} \delta_{\frac{1}{k} \lambda_{k,i}}
\]
denote the probability measure on $\R$ associated to the normalized successive maxima of $\mathscr{S}_k$. Then the $\nu_k$ converge weakly to a Borel probability measure $\nu$ on $\R$. $\nu$ has support bounded above and unbounded below. 
\end{Th}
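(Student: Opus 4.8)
The plan is to reduce the singular case to the continuous (smooth) case treated by Chen, at the cost of twisting by the divisor of cusps. The key observation is that although the Petersson metric on $\ov{\mathscr{L}}$ is logarithmically singular along $D$ (and the elliptic points), the subsheaf $\mathscr{L}^{\otimes k}(-D)$ carries an induced metric that is much better behaved: near a cusp, the logarithmic singularity of the Petersson metric on $\mathscr{L}^{\otimes k}$ is exactly compensated (up to $\log\log$-type factors) by vanishing of order $k$... more carefully, by vanishing of order $1$ in each local uniformizer when we pass to $\mathscr{L}^{\otimes k}(-D)$. So the first step is to make this precise: show that the natural metric on $\mathscr{L}_\C^{\otimes k}(-D)$ obtained from the Petersson metric extends continuously (indeed, is a continuous metric with at worst mild, integrable singularities that do not affect the $L^2$-theory) across the cusps, and similarly handle the elliptic points by passing to the coarse space or working orbifold-locally. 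This is essentially the content of K\"uhn's analysis (proposition 4.9 of \cite{Kuhn}) combined with the classical $q$-expansion bound $|f(q)| \ll |q|$ for a cusp form.

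Once the metric is controlled, I would set up an auxiliary line bundle. The issue is that $\mathscr{S}_k$ is the space of sections of $\mathscr{L}^{\otimes k}(-D)$, but the twisting divisor $-D$ is the \emph{same} for all $k$, so $\bigoplus_k H^0(\mathscr{X}(\Gamma), \mathscr{L}^{\otimes k}(-D))$ is not a graded algebra generated in degree $1$ over a single line bundle; it is a module over $\bigoplus_k H^0(\mathscr{X}(\Gamma), \mathscr{L}^{\otimes k})$. To apply Chen's theorem 4.1.8 of \cite{Chen-1} (which concerns the filtered/graded algebra of a continuously metrized ample line bundle) one must either (a) invoke the general theory of arithmetic volumes/Harder--Narasimhan filtrations for graded \emph{modules} rather than algebras, or (b) reduce to the algebra case by noting that for the distribution of \emph{normalized} successive maxima, twisting by a fixed divisor $-D$ is an asymptotically negligible perturbation. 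I would pursue (b): compare the lattice $\mathscr{S}_k = H^0(\mathscr{X}(\Gamma),\mathscr{L}^{\otimes k}(-D))$ with $H^0(\mathscr{X}(\Gamma),\mathscr{L}^{\otimes k})$ and its continuously-metrized cousin, show the ranks differ by $O(1)$ and the successive maxima differ by $O(\log k)$ uniformly (using that $D$ is an effective divisor of bounded degree and that the metric distortion from the cusps grows like $\log\log$, hence $o(k)$), and conclude that the two normalized empirical measures $\nu_k$ have the same weak limit. This is exactly the strategy of \S 3 of \cite{TGS} for $\Gamma(1)$; here it must be carried out with the more complicated cusp and elliptic-point structure of a general $\Gamma$.

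With the reduction in place, the existence of the weak limit $\nu$ follows from Chen's theorem applied to a continuous metric on (a power of) $\mathscr{L}$: the sequence of empirical measures of normalized successive maxima converges weakly to the ``concave-transform'' measure of the associated arithmetic $\R$-divisor, which is a Borel probability measure. For the qualitative claims about the support: the support is bounded \emph{above} because the successive maxima $\lambda_{k,i}$ are bounded above by (essentially) the arithmetic volume / top self-intersection data, which grows linearly in $k$ — concretely, by arithmetic Hilbert--Samuel and the fact that $\widehat{\deg}$ of $\overline{\mathscr{L}}^{\otimes k}(-D)$ is $O(k)$ with a controlled leading term, so $\tfrac{1}{k}\lambda_{k,1} = O(1)$; this is also immediate from Chen's theorem since the limit measure there is compactly supported for a continuous metric, and our perturbation moves mass by $o(1)$. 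The support is \emph{unbounded below} precisely because of the logarithmic singularities: there exist cusp forms whose Petersson norm is abnormally large (the $q$-expansion coefficients being bounded while the integral $\int |f|^2 \,y^{k}\,\tfrac{dx\,dy}{y^2}$ picks up a factor that \emph{grows} super-linearly — more precisely the sup-norm-to-$L^2$-norm comparison degrades near the cusp), forcing a positive proportion... no: forcing the \emph{smallest} normalized maxima $\tfrac1k\lambda_{k,d_k}$ to tend to $-\infty$. To make ``unbounded below'' rigorous I would exhibit, for each $N$, an explicit sub-lattice of $\mathscr{S}_k$ (spanned by cusp forms vanishing to high order at a chosen cusp, e.g.\ forms of the shape $(\text{cusp form})\cdot(\text{modular form})^{j}$ concentrated near the cusp) of rank $\gg_N d_k$... actually rank growing with $k$, all of whose Petersson norms are $\geq e^{Nk}$, which pushes at least a vanishing-but-nonempty — and in fact, by a counting argument, a \emph{fixed positive} — fraction, hence at least one, of the $\tfrac1k\lambda_{k,i}$ below $-N$ for all large $k$; letting $N\to\infty$ gives unboundedness of the support below.

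The main obstacle I anticipate is the second step: carefully quantifying the metric distortion near the cusps \emph{and the elliptic points} of a general $\Gamma$ and showing it is $o(k)$ in a way uniform enough to transfer Chen's limit theorem across the twist by $-D$. For $\Gamma(1)$ there is a single cusp and the elliptic points are handled by K\"uhn's explicit computation; for general $\Gamma$ one has several cusps of varying widths and several elliptic points of varying orders, and the arithmetic surface $\mathscr{X}(\Gamma)$ may have bad reduction there, so the local analysis of $\overline{\mathscr{L}}$ and of the integral structure $\mathscr{S}_k$ at those places — in particular verifying that $\mathscr{S}_k$ really is the full lattice of \emph{integral} cusp forms and computing its covolume to the precision needed — is where the real work lies. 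I expect the cleanest route is to pull back to a sufficiently fine level structure (e.g.\ $\Gamma(n)$) where everything is a genuine fine moduli problem with good integral models, prove the statement there, and descend, using that successive minima of a sublattice and of its saturation differ by bounded factors and that the distribution is insensitive to passing to a finite cover after renormalizing by the degree.
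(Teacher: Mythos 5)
Your central reduction does not work, and the reason is exactly the phenomenon the theorem is trying to capture. You propose to pass from the Petersson metric on $\mathscr{L}^{\otimes k}$ to ``its continuously-metrized cousin'' on $\mathscr{L}^{\otimes k}(-D)$ and argue that the two families of successive maxima differ by $O(\log k)$, so that after dividing by $k$ the empirical measures have the same weak limit, and then invoke Chen's theorem for continuous metrics. But Chen's limit measure for a continuous metric is \emph{compactly supported}, and the theorem you are proving asserts that $\nu$ has support unbounded below — so any argument establishing that the Petersson case is an $o(1)$ perturbation (after normalization by $k$) of a continuous-metric case would in fact disprove the theorem. The error is in the claim that the twist by the \emph{fixed} divisor $-D$ tames the singularity: near a cusp the Petersson metric on $\mathscr{L}^{\otimes k}$ carries a factor $(4\pi\operatorname{Im} z)^{12k}\approx(-\log|q|)^{12k}$ whose singular order grows with $k$, while a single twist by $-D$ only supplies vanishing of order one in $q$, independent of $k$. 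The result is that cusp forms of weight $12k$ can have Petersson norm as large as roughly $(k/\text{const})^{6k}$ (see the explicit computation in the proof of Proposition \ref{lower-bounds-successive-max}), so some normalized maxima $\frac{1}{k}\lambda_{k,i}$ really do drift to $-\infty$ like $-6\log k$ — a $\Theta(k\log k)$ discrepancy, not $O(\log k)$.

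The paper's route is different in a way that matters. Rather than comparing against a fixed continuous-metric line bundle, it introduces for each integer $L$ the graded algebra $B_L=\bigoplus_q B_{L,qL}$ with $B_{L,k}=H^0\bigl(X(\Gamma),\mathscr{L}_\Q^{\otimes k}(-\lceil k/L\rceil D)\bigr)$, i.e.\ cusp forms whose vanishing order at the cusps \emph{grows linearly with $k$}. That growing twist is what actually neutralizes the $(\log)^{12k}$ blow-up and lets Chen's theorem 3.4.3 apply (after verifying quasi-filteredness via a Gromov-lemma-type estimate and an intersection-theoretic upper bound on heights). Each $\nu_{L,\infty}$ is then compactly supported, but their lower bounds escape to $-\infty$ as $L\to\infty$; the limit $\nu=\lim_L\nu_{L,\infty}$ is obtained by a Riesz-representation/uniform-tightness argument, and the unboundedness below is deduced \emph{indirectly} from Theorem \ref{measure-comparison-thm-intro} together with the $\Gamma(1)$ case from \cite{TGS}, precisely because the intersection-theoretic bounds available for general $\Gamma$ are not sharp enough to prove it directly (as the paper explicitly remarks). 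Your sub-lattice idea in the last part of the proposal (cusp forms vanishing to high order at a chosen cusp) is structurally close to the $B_{L,k}$, but you deploy it only to attempt unboundedness; in the paper that construction is the backbone of the convergence proof itself, not an afterthought. The approach of passing to $\Gamma(n)$, proving the statement there, and descending would also require exactly the comparison machinery of \S\ref{measure-comp-proof-section}, so it does not avoid the work.
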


A natural question is: given finite index subgroups $\Gamma' \subseteq \Gamma$ of $\Gamma(1)$, how do the successive maxima of their integral cusp forms, and the associated limit measures compare? This is addressed by the following theorem. 
\begin{Th}
\label{measure-comparison-thm-intro}
Let $\Gamma' \subseteq \Gamma$ be finite index subgroups of $\Gamma(1)$. Let $\mathscr{X}(\Gamma')$ (resp. $\mathscr{X}(\Gamma)$) be an arithmetic surface associated to $\Gamma'$ (resp. $\Gamma$) with generic fiber $X(\Gamma')$ (resp. $X(\Gamma)$). Suppose there is a $\Q$-morphism $\pi_{\Gamma', \Gamma} : X(\Gamma') \to X(\Gamma)$ that is a model over $\Q$ of the natural map $X(\Gamma')_\C \to X(\Gamma)_\C$ of complex modular curves. 

Let $\nu_k'$ (resp. $\nu_k$) denote the probability measure on $\R$ associated to the normalized successive maxima of the euclidean lattice of integral cusp forms of level $\Gamma'$ (resp. $\Gamma$) and weight $12k$ with respect to $\mathscr{X}(\Gamma')$ (resp. $\mathscr{X}(\Gamma)$) as in theorem \ref{main-theorem-statement-intro}. Suppose that $\nu_k' \to \nu'$ (resp. $\nu_k \to \nu$) weakly for a Borel probability measure $\nu'$ (resp. $\nu$) on $\R$ (by theorem \ref{main-theorem-statement-intro}). Then there is a Borel probability measure $\omega$ on $\R$ such that 
\[
\nu' = \frac{1}{\emph{deg}(\pi_{\Gamma', \Gamma})} \cdot \nu + \left(1 - \frac{1}{\emph{deg}(\pi_{\Gamma', \Gamma})} \right) \cdot \omega. 
\] 
\end{Th}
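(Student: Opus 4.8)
The plan is to compare the lattices of integral cusp forms of level $\Gamma'$ and level $\Gamma$ directly, using the pullback map $\pi_{\Gamma', \Gamma}^*$ on modular forms. First I would observe that pullback along $\pi_{\Gamma', \Gamma}$ gives, for each $k \geq 1$, an injection $\iota_k : \mathscr{S}_k^{\Gamma} \hookrightarrow \mathscr{S}_k^{\Gamma'}$ of the lattice of level-$\Gamma$ cusp forms of weight $12k$ into the corresponding level-$\Gamma'$ lattice (a cusp form pulls back to a cusp form since $\pi_{\Gamma',\Gamma}$ sends cusps to cusps). The key point is to understand how $\iota_k$ interacts with the Petersson norms: by the standard change-of-variables for the Petersson inner product under a covering of degree $n := \deg(\pi_{\Gamma', \Gamma})$, one has $\|\iota_k(s)\|_{\Gamma'} = \sqrt{n}\,\|s\|_{\Gamma}$ for all $s \in \mathscr{S}_k^\Gamma \otimes \R$ (the hyperbolic volume multiplies by $n$). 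Thus $\iota_k$ is, up to the fixed scalar $\sqrt n$, an isometric embedding of euclidean lattices — though one must be careful that it need not be primitive, i.e. the image need not be saturated in $\mathscr{S}_k^{\Gamma'}$; I would either show the index is bounded independently of $k$ (so it contributes $O(\log k)$, negligible after dividing by $k$) or work with the saturation, whose successive maxima differ from those of the image by $O(\log k / k)$ as well.

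Next I would invoke the general principle — essentially a filtration/interlacing argument for successive minima of a sublattice, used already in \cite{Chen-1} and \cite{TGS} — that if $M \subseteq L$ is a sublattice of a euclidean lattice with $\mathrm{rk}\, M = d$, $\mathrm{rk}\, L = D$, then the empirical measure of the normalized successive maxima of $L$ decomposes: roughly, the $D$ successive maxima of $L$ consist of the $d$ successive maxima of $M$ together with $D - d$ other values, each of which is squeezed between consecutive successive maxima of $L$ itself. Concretely, since $\dim H^0(X(\Gamma'), \mathscr{L}_\Q^{\otimes k}(-D')) / \dim H^0(X(\Gamma), \mathscr{L}_\Q^{\otimes k}(-D)) \to n$ as $k \to \infty$ (by Riemann–Roch, the degrees of the relevant line bundles scale by $n$ up to bounded genus/cusp corrections), the image $\iota_k(\mathscr{S}_k^\Gamma)$ accounts for a fraction $\to 1/n$ of the rank of $\mathscr{S}_k^{\Gamma'}$. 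Writing $\nu_k'$ as a sum over all $d_k' $ indices, I split it into the contribution of the sublattice and the complementary contribution: $\nu_k' = \frac{d_k}{d_k'}\,\tilde\nu_k + \bigl(1 - \frac{d_k}{d_k'}\bigr)\,\omega_k$, where $\tilde\nu_k$ is the empirical measure of the normalized successive maxima of $\iota_k(\mathscr{S}_k^\Gamma)$ (a shift of $\nu_k$ by $\tfrac{1}{k}\cdot\tfrac12\log n \to 0$, hence $\tilde\nu_k \to \nu$ weakly) and $\omega_k$ is a probability measure supported on the complementary successive maxima.

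It then remains to extract a weak limit $\omega$ of the $\omega_k$ and pass to the limit in the identity. Since $\frac{d_k}{d_k'} \to \frac1n$ and both $\nu_k' \to \nu'$ and $\tilde\nu_k \to \nu$ weakly, the measures $\omega_k = \frac{1}{1 - d_k/d_k'}\bigl(\nu_k' - \frac{d_k}{d_k'}\tilde\nu_k\bigr)$ converge weakly to $\omega := \frac{1}{1 - 1/n}\bigl(\nu' - \frac1n \nu\bigr)$; one must check $\omega \geq 0$, which follows because each $\omega_k$ is an honest probability measure (a sum of Dirac masses at the complementary maxima, which are real numbers), and the set of Borel probability measures on $\R$ is closed under weak limits provided no mass escapes to $\pm\infty$ — tightness here comes from the fact that all the relevant normalized successive maxima of $\mathscr{S}_k^{\Gamma'}$ are uniformly bounded above (by theorem \ref{main-theorem-statement-intro}, $\nu'$ has support bounded above, and the bound is uniform in $k$ by the arguments establishing that theorem) and the lower tail of $\omega_k$ is dominated by that of $\nu_k'$. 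Rearranging the limiting identity gives exactly $\nu' = \frac1n \nu + (1 - \frac1n)\omega$.

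The main obstacle I anticipate is \emph{not} the soft measure-theoretic bookkeeping but the two quantitative inputs: (i) controlling the index $[\mathscr{S}_k^{\Gamma'} \cap (\iota_k(\mathscr{S}_k^\Gamma)\otimes\Q) : \iota_k(\mathscr{S}_k^\Gamma)]$ — i.e. how far pullback is from being primitive at the integral level — which involves the chosen integral models $\mathscr{X}(\Gamma')$, $\mathscr{X}(\Gamma)$ and the map between them (this is where the hypothesis that $\pi_{\Gamma',\Gamma}$ is defined over $\Q$ is used, and where one needs that the $\log$ of the index is $o(k)$), and (ii) verifying the tightness/no-escape-of-mass for $(\omega_k)$ at $-\infty$ uniformly in $k$, which ultimately rests on the same estimates (e.g. arithmetic Hilbert–Samuel, or explicit basis constructions via Eisenstein/Poincaré series) that underlie theorem \ref{main-theorem-statement-intro}. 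Everything else is the interlacing inequality for successive minima of a sublattice plus elementary manipulation of weakly convergent sequences of measures.
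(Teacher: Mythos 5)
Your high-level plan — decompose $\nu_k'$ into the contribution of the pullback sublattice and a complementary part, then pass to the limit — is indeed the shape of the paper's argument, which uses the short-exact-sequence decomposition of proposition 1.2.5 of \cite{Chen-1}. But there is a genuine gap in the step where you assert "the $D$ successive maxima of $L$ consist of the $d$ successive maxima of $M$ together with $D-d$ other values." This is not a correct general statement, and it hides the technical heart of the proof.

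What \emph{is} true, and what Chen's proposition gives you, is that the successive maxima of the sublattice $S_k$ \emph{with respect to the subspace filtration} $(S_k'^a \cap S_k)_{a\in\R}$ induced from $S_k'$ form a sub-multi-set of the successive maxima of $S_k'$, and one gets an exact decomposition $\nu_k' = \frac{d_k}{d_k'}\,\tilde\nu_k + \bigl(1-\frac{d_k}{d_k'}\bigr)\,\omega_k$ with $\tilde\nu_k$ built from \emph{that} filtration. But these subspace-filtration maxima need not agree with the intrinsic successive maxima of $S_k$ computed from its own adelic height — an element $t \in S_k$ can lie in the span (inside $S_k'$) of elements of $S_k'$ with large height even though $t$ is not in the span of elements of $S_k$ with large height, so in general $S_k^a \subsetneq S_k'^a \cap S_k$. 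You simultaneously use the decomposition identity (which requires the subspace filtration) and the convergence $\tilde\nu_k \to \nu$ (which requires the intrinsic filtration), without bridging the two. The paper closes this gap in lemma \ref{maxima-comparison-lemma} and equation \ref{maxima-comparison-subspace}: after passing to a further normal subgroup $\Gamma'' \trianglelefteq \Gamma$ and a Galois extension $F/E_0$, it averages an element of $S_k'^a$ over the deck group (resp.\ the Galois group) to land back in $S_{E_0,k}$, losing only a fixed constant of height. This yields the two-sided sandwich $S_{E_0,k}^a \subseteq S_k'^a \cap S_{E_0,k} \subseteq S_{E_0,k}^{a - c}$ with $c$ independent of $k$, which is precisely what is needed to conclude $\tilde\nu_k \to \nu$. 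Nothing in your outline produces such a bound; the "interlacing inequality" you invoke goes only one way and leaves the discrepancy uncontrolled.

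Two smaller points. First, you treat the degree $n = \deg(\pi_{\Gamma',\Gamma})$ as though it were $[\Gamma:\Gamma']$, whereas in fact $\deg(\pi_{\Gamma',\Gamma}) = [E:E_0][\Gamma:\Gamma']$, where $E$ and $E_0$ are the fields of constants of $X(\Gamma')$ and $X(\Gamma)$. The paper has to juggle both the group-index direction and the field-extension direction, and lemma \ref{maxima-comparison-lemma} has separate parts (a) and (b) for exactly this reason; your proposal does not engage with the field-of-constants contribution at all. Second, your worry about whether the pullback lattice is primitive in $\mathscr{S}_k^{\Gamma'}$ is legitimate but is resolved cleanly by lemma \ref{global-sections-intersection}, which shows $\mathscr{M}_k = M_k \cap \mathscr{M}_k'$, so the index is exactly $1$; you would need to prove something like this rather than leave it as a fork in the road. (Your factor $\sqrt{n}$ in the norm comparison is also off — with the paper's normalization of the Petersson inner product by $1/d_\Gamma$, the pullback is norm-preserving, as in lemma \ref{norm-compatibility-lemma} — but as you note this washes out after dividing by $k$, so it is harmless.)
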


\subsection{Remarks}
\begin{Rem}
As mentioned before, theorem \ref{main-theorem-statement-intro} generalizes theorem 3.2.2 (i), (ii) of \cite{TGS}, which addresses the case $\Gamma = \Gamma(1)$ for the associated arithmetic surface $\mathscr{X}(1) = \P_\Z^1$ (identification coming from the $j$-function). We remark that our overall approach is similar to that of \cite{TGS}, with a couple of key differences. First, the approach in \cite{TGS} uses various properties of $q$-expansions of modular forms for $\Gamma(1)$, including integrality of the coefficients of $q$-expansions of modular forms over the integral modular curve $\mathscr{X}(1)$ to obtain (lower) bounds on the Petersson norms of integral cusp forms (see for instance, proposition 3.3.1, lemma 3.3.1, theorem 3.4.1, and lemma 3.4.2 in \cite{TGS}). For a general (in particular, non-congruence) finite index subgroup $\Gamma \subseteq \Gamma(1)$, we do not have access to such integrality properties for the $q$-expansion coefficients of integral modular forms. 

Instead, following \cite{Chen-1}, we use results from the intersection theory of logarithmically singular line bundles on arithmetic surfaces as developed by K\"{u}hn in \cite{Kuhn}, along with height formulas developed by Bost, Gillet, and Soul\'{e} in \cite{Bost-Gillet-Soule} to obtain corresponding (lower) bounds for the sup norms of integral cusp forms (see proposition \ref{height-upper-bound-prop}, which is the counterpart to lemma 3.4.2 in \cite{TGS}). We then appeal to a ``Gromov's lemma'' type result in our log-singular setting (see proposition \ref{Gromov-lemma-prop}, page 1 of \cite{FJK}, and theorem 1.7 in \cite{AMM-Bergman-Kernels}), comparing sup norms to $L^2$ norms for sections in increasing powers of the line bundle of modular forms, to get analogous (lower) bounds on the $L^2$ (i.e. Petersson) norms of integral cusp forms for $\Gamma$. The estimates obtained from the intersection theory approach, however, are not as sharp as those obtained from the $q$-expansions, and this ultimately prevents us from showing directly that the support of the limit measure $\nu$ in theorem \ref{main-theorem-statement-intro} is unbounded below. Instead, we deduce unboundedness (in corollary \ref{unboundedness-general-corollary}) from theorem \ref{measure-comparison-thm-intro} along with the unboundedness in the case $\Gamma = \Gamma(1)$ proved in part (ii) of theorem 3.2.2 of \cite{TGS}. 

Another key difference with \cite{TGS} is that, unlike the $\Gamma = \Gamma(1)$ case considered there, it is difficult to write down explicit bases for the spaces of cusp forms for general $\Gamma$, which are then used for getting bounds on the successive maxima. In proposition \ref{lower-bounds-successive-max}, we construct less explicit bases that nevertheless have the same general shape as those in \cite{TGS} theorem 3.4.1 and lemma 3.4.4, and that turns out to be enough to yield the desired bounds on the successive maxima in our case. 
\end{Rem}

\begin{Rem}
An important feature of theorem \ref{main-theorem-statement-intro} and theorem 3.2.2 of \cite{TGS} is the fact that the limit measures have support unbounded below, in stark contrast to Chen's theorem for continuous metrics mentioned before, where the limit measure has compact support. This is explained, very roughly, as follows. When $\mathscr{L}$ has continuous metric, then at least if $\mathscr{L}_\Q$ is ample, 
\[
\bigoplus_{k \geq 0} H^0(\mathscr{X}_\Q, \mathscr{L}_\Q^{\otimes k})
\]
is a finitely generated quasi-filtered algebra over $\Q$ (in the sense of definition 3.2.1 of \cite{Chen-1}). The convergence result for the measures then follows from a more general result of Chen (theorem 3.4.3 \cite{Chen-1}) concerning the distribution of successive maxima for such algebras, which shows, in particular, that the limiting distribution has compact support. 

In our setting, and that of \cite{TGS}, the Petersson metric on the line bundle $\mathscr{L}$ of modular forms is logarithmically singular, and hence to make sense of the norms at the archimedean places, we restricted to the subspace $H^0(\mathscr{X}_\Q, \mathscr{L}_\Q^{\otimes k}(-D))$ of sections vanishing along the cusps. The graded $\Q$-algebra
\[
B := \bigoplus_{k \geq 0} H^0(\mathscr{X}_\Q, \mathscr{L}_\Q^{\otimes k}(-D))
\]
is not finitely generated. However, we can ``approximate'' $B$ by a sequence of finite-type quasi-filtered $\Q$ algebras $B_L$ indexed by integers $L$ (as in \S \ref{cusp-forms-vanishing-higher-orders-ss}, and \S 3.6 of \cite{TGS}). Then applying theorem 3.4.3 of \cite{Chen-1} to each $B_L$, we get a sequence of compactly supported measures $\nu_{L, \infty}$, where the support of each $\nu_L$ is bounded above by a constant independent of $L$, but the lower bound for the support goes to $-\infty$ as $L \to \infty$, at least in the case $\Gamma = \Gamma(1)$ considered in \cite{TGS}. We then show that the $\nu_L$ converge to $\nu$ as $L \to \infty$, which accounts for the support of $\nu$ being unbounded below.
\end{Rem}

\begin{Rem}
A natural question is whether theorem \ref{main-theorem-statement-intro} generalizes to automorphic forms for other groups. More generally, we may ask the following general problem in the setting of \cite{Berman-Montplet}. Let $\mathscr{X}$ be an arithmetic variety with regular generic fiber, $\ov{\mathscr{L}}$ a hermitian line bundle on $\mathscr{X}$ with logarithmic singularities along a normal crossings divisor $D \in \text{Div}(\mathscr{X}_\Q)$. Fix a smooth volume form on $\mathscr{X}(\C)$ invariant under complex conjugation. For each $k \geq 1$, let $\mathscr{S}_k := H^0(\mathscr{X}, \mathscr{L}^{\otimes k}) \cap H^0(\mathscr{X}_\Q, \mathscr{L}_\Q^{\otimes k}(-D))$ denote the euclidean lattice of sections of $\mathscr{L}^{\otimes k}$ that vanish along $D$ equipped with the $L^2$ norm for the given volume form. Then do the probability measures $\nu_k$ associated to the (appropriately) normalized successive maxima of $\mathscr{S}_k$ converge weakly to a probability measure on $\R$? 

The case $\dim \mathscr{X} = 2$ (i.e., an arithmetic surface) is still interesting. One difficulty, even for $\dim \mathscr{X} = 2$, is the lack of a Gromov's lemma type result for general logarithmically singular line bundles, though some results are known with specific hypotheses on $\ov{\mathscr{L}}$ (see, for instance, \cite{AMM-Bergman-Kernels}). We hope to come back to this problem in the future. 
\end{Rem}

\vspace{0.5em}
\textbf{Outline of the paper.}
In \S \ref{background-section}, we start with some background on complex modular forms and modular curves. Then, following \cite{Kuhn}, we define integral models for modular curves and the line bundle of modular forms in \S \ref{integral-models-ss}, and describe the Petersson metric on this bundle in \S \ref{rational-Pet-met-ss}. We then introduce the notion of adelic vector bundles, as in definition 3.1 of \cite{Gaudron}, their successive maxima, and apply these concepts to the space of rational cusp forms of weight $12k$ for finite index subgroups of $\text{PSL}_2(\Z)$. This lets us define a decreasing $\R$-filtration on each $S_k$, which enables us to use results of Chen on quasi-filtered algebras in order to prove our results. Then in \S \ref{proof-of-thm-1-section} and \S \ref{measure-comp-proof-section}, we prove theorems \ref{main-theorem-statement-intro} and \ref{measure-comparison-thm-intro}, respectively. 

\vspace{1em}
\textbf{Acknowledgements.} I would like to thank my advisor, Ted Chinburg, for many helpful discussions regarding this project. 

\section{Background on integral modular forms}
\label{background-section}

In this section, we start with some background on complex modular forms and modular curves. Then, following \cite{Kuhn}, we define integral models for modular curves and the line bundle of modular forms in \S \ref{integral-models-ss}, and describe the Petersson metric on this bundle in \S \ref{rational-Pet-met-ss}. We then introduce the notion of adelic vector bundles, as in definition 3.1 of \cite{Gaudron}, their successive maxima, and apply these concepts to the space of rational cusp forms of weight $12k$ for finite index subgroups of $\text{PSL}_2(\Z)$.  

\subsection{Modular curves and modular forms over $\C$}
\label{mod-curves-over-C-ss}
Let $\Gamma$ be a finite index subgroup of $\Gamma(1) := \text{PSL}_2(\Z)$. Then $\Gamma$ acts on the (complex) extended upper half plane $\mf{h}^* := \mf{h} \cup \P^1(\Q)$ by linear fractional transformations
\[
\Gamma \ni \begin{pmatrix} a & b \\ c & d \end{pmatrix}  \cdot z = \frac{az + b}{cz + d}. 
\]
The quotient 
\[
X(\Gamma)_\C := \Gamma \backslash (\mf{h} \cup \P^1(\Q)). 
\]
is a compact Riemann surface called the \textit{modular curve associated to $\Gamma$}. The inclusion $\Gamma \subseteq \Gamma(1)$ induces a holomorphic map $\pi_{\Gamma, \C} : X(\Gamma)_\C \to X(\Gamma(1))_\C$ of Riemann surfaces of degree $[\Gamma(1) : \Gamma]$. 

For a point $z \in \mf{h}^*$, let 
\[
\Gamma_z := \{ \tau \in \Gamma : \tau \cdot z = z \}
\]
denote the stabilizer of $z$ in $\Gamma$. This group is either trivial, finite, or infinite cyclic, and we call the image of $z$ in $X(\Gamma)_\C$ an ordinary point, elliptic point, or cuspidal point (or just a cusp) in these cases, respectively. For $\Gamma(1)$, the elliptic points are the images of $i$ and $e^{2\pi i /3}$ and there is a unique cusp corresponding to the image of (any) point of $\P^1(\Q)$. For a general $\Gamma$, the elliptic points and cusps of $X(\Gamma)_\C$ map to elliptic points and cusp of $X(\Gamma(1))_\C$, respectively. 

Given $\gamma \in \text{PSL}_2(\R)$, the \textit{weight $k$ slash operator} $(\cdot|_k \gamma)$ on holomorphic functions on $\mf{h}$ is given by
\[
f(z) \mapsto (f|_k \gamma)(z) := (cz + d)^{-k} f \left( \frac{az + b}{cz + d} \right)
\]
for $\gamma = \begin{pmatrix} a & b \\ c & d \end{pmatrix}$. For $\Gamma \subseteq \Gamma(1)$, a \textit{meromorphic modular form of level $\Gamma$ and weight $k$} is a meromorphic function $f : \mf{h} \to \C$ such that 
\begin{enumerate}[(a)]
\item $(f|_k \gamma)(z) = f(z)$ for all $\gamma \in \Gamma$, and 
\item $f$ is meromorphic at the cusps of $\Gamma$. This means that at each cusp, if $t$ denotes a local parameter, then $f$ admits a Fourier expansion of the form $f(t) = \sum_n a_n t^n$, with $a_n = 0$ for sufficiently small $n$. 
\end{enumerate}
Meromorphic modular forms of weight $0$ are called \textit{modular functions} - these are simply the rational functions of $X(\Gamma)_\C$. A \textit{modular form of weight $k$ for $\Gamma$} is a meromorphic modular form that is holomorphic everywhere, including at the cusps. A \textit{cusp form of weight $k$ for $\Gamma$} is a modular form that is zero at every cusp (i.e., the $a_0$ coefficient in the local Fourier expansion at every cusp is zero). The space of modular (resp. cusp) forms of level $\Gamma$ and weight $k$ is denoted by $M_k(\Gamma)_\C$ (resp. $S_k(\Gamma)_\C$) - these are finite dimensional vector spaces over $\C$. 

\begin{Ex}
\label{j-function-example}
\begin{enumerate}[(i)]
\item The \textit{classical $j$-invariant function} is a modular function for $\Gamma(1)$ given by
\[
j(z) = \frac{1}{q} + 744 + 196884q + \dots,
\]
where $q = e^{2\pi i z}$ is the local parameter at the unique cusp $S_\infty$ of $X(1)_\C := X(\Gamma(1))_\C$. Note that $j$ has a unique pole of order $1$ at $S_\infty$, so that the induced morphism $j : X(1)_\C \to \P_\C^1$ is an isomorphism. Henceforth, we identify $X(1)_\C$ with $\P_\C^1$ via $j$. 

\item The \textit{classical discriminant function} $\Delta$ is a cusp form of weight $12$ for $\Gamma(1)$ given by 
\[
\Delta(z) = q \prod_{n=1}^\infty (1 - q^n)^{24},
\]
where as before, $q = e^{2\pi i z}$. 
\end{enumerate}
\end{Ex}

The functional equations satisfied by meromorphic modular forms show that they are sections of a line bundle $\mf{M}_k(\Gamma)_\C$ on $X(\Gamma)_\C$, and the meromorphic modular forms that are zero at every cusp form a sub-bundle $\mf{S}_k(\Gamma)_\C$. The following is proposition 4.7 in \cite{Kuhn}:
\begin{Prop}
\label{expl-desc-mod-forms-prop}
Let $\Gamma \subseteq \Gamma(1)$ be a finite index subgroup, and let $k \geq 1$ be a positive integer. Let $S_1, \dots, S_t \in X(\Gamma)_{\C}$ denote the cusps, and let $\pi_{\Gamma, \C} : X(\Gamma)_{\C} \to \P_{\C}^1$ denote the canonical map. Let $D_\C := S_1 + \dots + S_t \in \emph{Div}(X(\Gamma)_\C)$. Then we have an isomorphism of line bundles: 
\[
\mf{M}_{12k}(\Gamma)_{\C} \to \pi_{\Gamma, \C}^* \mc{O}_{\P_\C^1}(\infty)^{\otimes k} : f \mapsto f/\Delta^k,
\]
which restricts to an isomorphism
\[
\mf{S}_{12k}(\Gamma)_{\C} \to \left(\pi_{\Gamma, \C}^* \mc{O}_{\P_\C^1}(\infty)^{\otimes k}\right)(-D_\C). 
\] 
\end{Prop}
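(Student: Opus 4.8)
The plan is to show that division by $\Delta^k$ gives the claimed isomorphism, using the fact that $\Delta$ is a nowhere-vanishing holomorphic modular form of weight $12$ on $\mf{h}$ with a simple zero at the cusp of $\Gamma(1)$, pulled back to $X(\Gamma)_\C$. First I would recall the functional-equation/transformation interpretation: a meromorphic modular form $f$ of weight $12k$ for $\Gamma$ is by definition a meromorphic section of the line bundle $\mf{M}_{12k}(\Gamma)_\C$, whose transition functions are built from the automorphy factors $(cz+d)^{12k}$. Since $\Delta$ transforms with weight $12$ and is holomorphic and non-vanishing on $\mf{h}$, the ratio $f/\Delta^k$ is $\Gamma$-invariant as a meromorphic function on $\mf{h}$, i.e. it descends to a meromorphic function on $X(\Gamma)_\C$, equivalently a meromorphic section of $\mc{O}_{X(\Gamma)_\C}$. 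The content of the proposition is to identify which line bundle this section lives in once we track behavior at the cusps, so the natural route is to compare divisors.

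Next I would make the divisor computation precise. The key input is that on $X(1)_\C \cong \P_\C^1$ (via $j$), $\Delta$ has divisor exactly $S_\infty$, the unique cusp, i.e. $\mathrm{div}(\Delta) = S_\infty$ as a section of $\mf{M}_{12}(\Gamma(1))_\C$, and correspondingly $\mf{M}_{12}(\Gamma(1))_\C \cong \mc{O}_{\P^1_\C}(\infty)$ with $\Delta \leftrightarrow$ the canonical section of $\mc{O}(\infty)$ vanishing to order $1$ at $\infty$. Pulling back along $\pi_{\Gamma,\C}$: the bundle $\mf{M}_{12k}(\Gamma)_\C$ is the pullback of $\mf{M}_{12k}(\Gamma(1))_\C \cong \mc{O}_{\P^1_\C}(\infty)^{\otimes k}$, because the automorphy factors defining $\mf{M}_{12k}(\Gamma)_\C$ are the restrictions to $\Gamma$ of those for $\Gamma(1)$ — this is precisely why the slash operator and the holomorphy conditions match up, and it is essentially the statement that the line bundle of weight-$12k$ forms is functorial under $\pi_{\Gamma,\C}$. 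Hence $\Delta^k$, viewed on $X(\Gamma)_\C$, is $\pi_{\Gamma,\C}^*$ of the $k$-th power of the canonical section of $\mc{O}(\infty)$, and multiplication by $\Delta^k$ is precisely the isomorphism $\pi_{\Gamma,\C}^*\mc{O}_{\P^1_\C}(\infty)^{\otimes k} \xrightarrow{\sim} \mf{M}_{12k}(\Gamma)_\C$; dividing by $\Delta^k$ is its inverse, which is the first displayed map.

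Then I would handle the cusp-form refinement. A meromorphic form $f$ of weight $12k$ is holomorphic (a genuine modular form, a section of $\mf{M}_{12k}(\Gamma)_\C$, not merely meromorphic) iff $f/\Delta^k$ is a holomorphic section of $\pi_{\Gamma,\C}^*\mc{O}(\infty)^{\otimes k}$ — this is immediate from the isomorphism just established. For the vanishing-at-cusps condition, I would work in a local coordinate at a cusp $S_i$ of $X(\Gamma)_\C$: if $S_i$ has width $h_i$ (ramification index of $\pi_{\Gamma,\C}$ over $S_\infty$ equal to $h_i$, since all cusps of $\Gamma$ lie over the cusp of $\Gamma(1)$), a local parameter is $t = q^{1/h_i}$ with $q = e^{2\pi i z}$. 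Since $\Delta = q\prod(1-q^n)^{24}$ has a simple zero in $q$, as a section of the pulled-back bundle $\Delta^k$ vanishes to order $k h_i \cdot(1/h_i) \cdot \ldots$ — more carefully, $\mathrm{div}(\Delta^k) = k\,\pi_{\Gamma,\C}^*(S_\infty) = k\sum_i h_i S_i$ on $X(\Gamma)_\C$, while the $q$-expansion of a form $f$ at $S_i$ has the form $\sum_{n\ge 0} a_n t^n$, with $f$ a cusp form exactly when $a_0 = 0$ at every $S_i$. Unwinding: $f/\Delta^k$, as a section of $\mc{O}(\infty)^{\otimes k}$ pulled back, is the section $\sum a_n t^n / (\text{unit}\cdot t^{?})$; the constant term $a_0$ of $f$ corresponds precisely to the value of $f/\Delta^k$ at $S_i$ relative to the local trivialization of $\pi_{\Gamma,\C}^*\mc{O}(\infty)^{\otimes k}$, so $f$ is a cusp form iff $f/\Delta^k \in \bigl(\pi_{\Gamma,\C}^*\mc{O}(\infty)^{\otimes k}\bigr)(-D_\C)$. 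Summing over cusps gives the restricted isomorphism onto $\bigl(\pi_{\Gamma,\C}^*\mc{O}_{\P^1_\C}(\infty)^{\otimes k}\bigr)(-D_\C)$.

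The main obstacle is the bookkeeping at the cusps: getting the widths $h_i$ and the local parameters $t = q^{1/h_i}$ right, and checking that "vanishing of the constant Fourier coefficient $a_0$" translates exactly to "vanishing of the section $f/\Delta^k$ at $S_i$" and not to vanishing of higher order — i.e. that the order-of-vanishing arithmetic $\mathrm{div}(\Delta^k) = k\sum_i h_i S_i$ meshes correctly with the ramification of $\pi_{\Gamma,\C}$ so that no extra twist by the ramification divisor sneaks in. Everything else (the transformation law, pullback of line bundles, invertibility of multiplication by $\Delta^k$) is formal once $\Delta$ is known to be holomorphic and non-vanishing on $\mf{h}$ with a simple zero at the cusp. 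Since this is proposition 4.7 of \cite{Kuhn}, I would lean on that reference for the cusp-local normalization and present the divisor comparison as the conceptual core.
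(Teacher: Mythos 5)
The paper itself does not prove this proposition; it is quoted verbatim as Proposition 4.7 of K\"uhn's article, so there is no in-house argument to compare against. Evaluating your blind attempt on its own terms: the overall strategy (use $\Delta$, which is nowhere-vanishing on $\mf{h}$ with a simple $q$-zero at the cusp, to set up the isomorphism via $f\mapsto f/\Delta^k$, then track divisors) is the standard and correct approach, and your cusp-local bookkeeping with $t=q^{1/h_i}$ and $\mathrm{div}(\Delta^k)=k\sum_i h_i S_i$ is exactly right, including the observation that vanishing of the constant Fourier coefficient $a_0$ at $S_i$ corresponds precisely to the section $f/\Delta^k$ vanishing at $S_i$ in the local trivialization of $\pi_{\Gamma,\C}^*\mc{O}(\infty)^{\otimes k}$.

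There is, however, a gap at the elliptic points that you wave away as ``formal.'' The claim that $\mf{M}_{12k}(\Gamma)_\C$ equals $\pi_{\Gamma,\C}^*\mf{M}_{12k}(\Gamma(1))_\C$ \emph{because} the automorphy factors for $\Gamma$ are restrictions of those for $\Gamma(1)$ is not adequate as stated, and the same argument, applied to weight $4$ and $\Gamma=\Gamma(2)$, would yield a false conclusion: $\deg\mf{M}_4(\Gamma(2))_\C=2$ while $\deg\pi^*\mf{M}_4(\Gamma(1))_\C=6\cdot 0=0$. The equality $\mf{M}_w(\Gamma)=\pi^*\mf{M}_w(\Gamma(1))$ is specific to weights $w$ divisible by $12$: what makes it work is that the orders of the elliptic stabilizers (which are $1$, $2$, or $3$) all divide $12k$, so the local automorphy factor $\zeta_m^{12k}$ at an elliptic point of order $m$ is $1$, the bundle is an honest (untwisted) line bundle there, the floor/fractional contributions at elliptic points in the usual formula for $\mf{M}_w$ become integral and compatible with pullback, and the nowhere-vanishing $\Delta^k$ trivializes both $\mf{M}_{12k}(\Gamma)$ and $\pi^*\mf{M}_{12k}(\Gamma(1))$ in the same way near each elliptic point. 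Your sketch never invokes the divisibility of $12k$ by $2$ and $3$, so as written the central ``pullback of line bundles'' step rests on a justification that doesn't hold in the generality it appears to claim. Adding one line making the weight-divisibility explicit at the elliptic-point analysis, or bypassing the pullback identity entirely and directly computing $\mathrm{div}_{X(\Gamma)}(\Delta^k)=k\sum_i h_i S_i$ as a section of $\mf{M}_{12k}(\Gamma)$ (using that $\Delta^k$ is a unit at every ordinary and elliptic point, the latter because the orbifold twist vanishes), would close the gap.
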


\subsection{Petersson metric on $\mf{M}_{12k}(\Gamma)$} 
\label{cx-Pet-metric-ss}
The \textit{Petersson metric} on $\mf{M}_{12k}(\Gamma)$ is defined as follows. Given a section $f$ of $\mf{M}_{12k}(\Gamma)$ over the open subset $U \subseteq X(\Gamma)_\C$ and a point $z \in \mf{h}$ corresponding to a point of $U$, we set
\[
|f|_{\text{Pet}}(z)^2 := |f(z)|^2 (4\pi \im(z))^{12k}. 
\]
This differs from the classical Petersson metric by a factor of $(4\pi)^{12k}$. See \cite{Kuhn} p. 227-228 for the reason for this factor. This metric has \textit{logarithmic singularities} in the sense of definition 3.1 of \cite{Kuhn} at the elliptic points and cusps of $\Gamma$ (see proposition 4.9 in \cite{Kuhn}). 

Let $d_\Gamma := [\Gamma(1) : \Gamma]$, and let $\mc{F}_\Gamma$ denote a fundamental domain for the action of $\Gamma$ on $\mf{h}$. Define the \textit{Petersson inner product} on $S_{12k}(\Gamma)_\C$ by 
\[
\lan f, g\ran_{\text{Pet}} := \frac{1}{d_\Gamma} \int_{\mc{F}_\Gamma} f(z) \ov{g(z)} (4 \pi \im(z))^{12k} \ \frac{dx dy}{y^2} \qquad (f,g \in S_{12k}(\Gamma)_\C),
\]
where $z = x + iy \in \mf{h}$. This is a Hermitian inner product on $S_{12k}(\Gamma)_\C$, and the norm of $f \in S_{12k}(\Gamma)_\C$ is given by $\|f\|_{\text{Pet}} := \lan f, f \ran_{\text{Pet}}^{1/2}$, which we note is simply the $L^2$ norm of the Petersson metric (with respect to the hyperbolic volume form on $X(\Gamma)_\C$). 

\subsection{Integral models of modular curves and modular forms}
\label{integral-models-ss}
We follow the setup in \S 4.11 of \cite{Kuhn} (with slightly modified notation). Let $\Gamma \subseteq \Gamma(1)$ be a finite index subgroup, and let $\pi_{\Gamma, \C} : X(\Gamma)_{\C} \to \P^1_\C$ denote the natural map given by the $j$-function. The branch points of $\pi_{\Gamma, \C}$ are contained in $\{0, 1728, \infty\} \subseteq \P^1_\C$, and hence $X(\Gamma)_\C$ and $\pi_{\Gamma, \C}$ are defined over a number field $E$. For such an $E$, let $X(\Gamma)_E$ be a smooth projective geometrically connected curve over $E$ with base change to $\C$ (via the implicit embedding of $E$ into $\C$) isomorphic to $X(\Gamma)_\C$, and let $\pi_{\Gamma, E} : X(\Gamma)_E \to \P_E^1$ be an $E$-morphism such that its base change to $\C$ (again, via the implicit embedding) coincides with $\pi_{\Gamma, \C}$. To simplify notation, we will refer to $X(\Gamma)_E$ by $X(\Gamma)$. Let $\pi_\Gamma : X(\Gamma) \to \P_\Q^1$ denote the composition of $\pi_{\Gamma, E}$ with the natural map $\P_E^1 \to \P_\Q^1$.  

Let $X(\Gamma)_\Z$ denote the normalization of $\P^1_\Z$ in $X(\Gamma)$ under the natural map $X(\Gamma) \to \P_\Q^1 \to \P_\Z^1$. Then $X(\Gamma)_\Z$ is a normal arithmetic surface with $X(\Gamma)$ as generic fiber. By results of Lipman (see \cite{Lipman}), there exists a \textit{desingularization} of $X(\Gamma)_\Z$. Namely, there exists a regular projective arithmetic surface with a proper birational morphism to $X(\Gamma)_\Z$. Let $\mathscr{X}(\Gamma) \to X(\Gamma)_\Z$ denote such a desingularization, and let the composite map $\mathscr{X}(\Gamma) \to X(\Gamma)_\Z \to \P_\Z^1$ be denoted by $\pi_{\Gamma, \Z} : \mathscr{X}(\Gamma) \to \P_\Z^1$. As in \S 4.11 in \cite{Kuhn}, we call any such $\mathscr{X}(\Gamma)$ an \textit{arithmetic surface associated to $\Gamma$}. We remark that this definition differs slightly from that in \S 4.11 of \cite{Kuhn} in that we do not require the field of constants $E$ of the generic fiber $X(\Gamma)$ of $\mathscr{X}(\Gamma)$ to be of minimal degree here.

Let $\infty \in \P_\Q^1$ correspond to the unique pole of the $j$-function, and let $\ov{\infty} \subseteq \P_\Z^1$ be its Zariski closure. Let $\mathscr{L} := \pi_{\Gamma, \Z}^* \mc{O}_{\P^1_\Z}(\ov{\infty})$. 

Now, 
\[
\mathscr{X}(\Gamma)_\C := \mathscr{X}(\Gamma) \otimes_\Z \C = \bigsqcup_{\sigma : E \to \C} X(\Gamma) \otimes_{E, \sigma} \C,
\]
where the disjoint union is over all embeddings $\sigma$ of $E$ into $\C$. For each $\sigma$, the base change $X(\Gamma) \otimes_{E, \sigma} \C$ is also a modular curve, say associated to the group $\Gamma_\sigma \subseteq \Gamma(1)$. Then 
\[
\mathscr{X}(\Gamma)_\C \cong \bigsqcup_{\sigma : E \to \C} X(\Gamma_\sigma)_\C. 
\]
For each $k \geq 1$, 
\[
\mathscr{L}^{\otimes k}_\C = \bigoplus_{\sigma : E \to \C} \pi_{\Gamma_\sigma, \C}^*\mc{O}_{\P_\C^1}(\infty)^{\otimes k}
\]
is identified with the sum of the line bundles of modular forms of level $\Gamma_\sigma$ and weight $12k$ by proposition \ref{expl-desc-mod-forms-prop}. Hence, we call $\mathscr{L}$ the \textit{line bundle on $\mathscr{X}(\Gamma)$ associated to modular forms of level $\Gamma$ and weight $12$}. 

Now let $\{T_1, \dots, T_r\} \subseteq X(\Gamma)$ denote the preimage of $\infty \in \P_\Q^1$ under $\pi_\Gamma$, and let $D := T_1 + \dots + T_r \in \text{Div}(X(\Gamma))$ be the divisor of cusps of $X(\Gamma)$. Then, again by proposition \ref{expl-desc-mod-forms-prop}, 
\[
\mathscr{L}^{\otimes k}_\C(-D_\C)  := \mathscr{L}_\Q^{\otimes k}(-D) \otimes_\Q \C
\]
is the sum of the bundles of cusp forms for the $\Gamma_\sigma$. 

\begin{Rem} 
We will be interested in studying the submodule of $H^0(\mathscr{X}(\Gamma), \mathscr{L}^{\otimes k})$ associated to cusp forms (see \S \ref{ad-Q-vb-rat-cusp-forms-ss}). As such, it does not matter which desingularization $\mathscr{X}(\Gamma) \to X(\Gamma)_\Z$ we choose, since for any such desingularization, and for any vector bundle $E$ on $X(\Gamma)_\Z$, if $\mathscr{E}$ denotes its pullback to $\mathscr{X}(\Gamma)$, then the natural map $H^0(X(\Gamma)_\Z, E) \to H^0(\mathscr{X}(\Gamma), \mathscr{E})$ is an isomorphism.
\end{Rem}

\subsection{Petersson metric on $\mathscr{L}^{\otimes k}$}
\label{rational-Pet-met-ss}
For $k \geq 1$, we endow $\mathscr{L}^{\otimes k}$ with the \textit{Petersson metric} as described in \S \ref{cx-Pet-metric-ss}. Explicitly: for a section $f = (f_\sigma)_\sigma$ of $\mathscr{L}^{\otimes k}_\C$ over the open $\bigsqcup_\sigma U_{\sigma} \subseteq \bigsqcup_\sigma X(\Gamma_\sigma)_{\C}$,
\[
|f|_{\text{Pet}}^2(z) := |(f_\sigma \Delta^{k}) (z)|^2 (4\pi \im(z))^{12k},
\]
where $z \in \mf{h}^*$ corresponds to a point of $U_\sigma \subseteq X(\Gamma_\sigma)_{\C}$. We remark that even though there are choices involved in picking the groups $\Gamma_\sigma$ such that $X(\Gamma) \otimes_{E, \sigma} \C \cong X(\Gamma_\sigma)_\C$, the resulting metric on $\mathscr{L}$ is independent of these choices. As already mentioned in \S \ref{cx-Pet-metric-ss}, this metric is logarithmically singular at the elliptic points and cusps of the $\Gamma_\sigma$. We refer to $\mathscr{L}^{\otimes k}$ endowed with the Petersson metric by $\ov{\mathscr{L}}^{\otimes k}$.


For each $\sigma$, denote by $d\mu_\sigma$ the invariant measure on $X(\Gamma_\sigma)_{\C}$ induced by $\frac{dx dy}{y^2}$ on the upper half plane, where we use the coordinate $z = x + i y$, and let $d\mu$ denote the measure $\bigsqcup_\sigma d\mu_\sigma$ on $\bigsqcup_\sigma X(\Gamma_\sigma)_\C$. 

For $f \in H^0(\mathscr{X}(\Gamma)_\C, \mathscr{L}_\C^{\otimes k}(-D_\C))$ a global section, define 
\[
\|f\|_{k,\infty}^2 := \frac{1}{[E : \Q] d_{\Gamma}} \int_{\mathscr{X}(\Gamma)(\C)} |f|_{\text{Pet}}^2 \ d\mu
\]
to be the normalized $L^2$-norm of the Petersson metric. Note that for $f = (f_\sigma)$, 
\[
\|f\|_{k,\infty}^2 = \frac{1}{[E : \Q]} \sum_{\sigma} \| f_\sigma \Delta^k \|_{\text{Pet}}^2,
\]
where $\| \cdot \|_{\text{Pet}}$ refers to the Petersson norm from \S \ref{cx-Pet-metric-ss}.  

\subsection{Adelic vector bundles, heights, and successive maxima} 
\label{adelic-vb-sm-ss}
Let $K$ be a number field, and let $\Sigma_K$ denote the set of all places of $K$. For $v \in \Sigma_K$, let $\C_v$ denote the completion of an algebraic closure of $K_v$. For $v$ finite, let $|\cdot|_v$ denote the norm on $\C_v$ that extends the $p$-adic norm on $\Q_p$, where $p$ is the rational prime lying under $v$. For $v$ an archimedean place, we let $| \cdot |_v$ denote the usual norm on $\C_v = \C$. 

A finite dimensional $K$-vector space $V$ is called an \textit{adelic vector bundle over $K$} if for each $v \in \Sigma_K$, $V \otimes_K \C_v$ is equipped with a norm $\| \cdot \|_v$ subject to the following conditions:
\begin{enumerate}[(i)]
\item There exists a $K$-basis $(s_1, \dots, s_r)$ of $V$ such that for all but finitely many finite places $v \in \Sigma_K$, 
\[
\| \alpha_1 s_1 + \dots + \alpha_r s_r \|_v = \max(|\alpha_1|_v, \dots, |\alpha_r|_v). 
\]
\item For every $v \in \Sigma_K$, $\| \cdot \|_v$ is invariant under the action of the group $\Gal(\C_v/K_v)$. Namely, if $(s_1, \dots, s_r)$ is a $K_v$-basis of $E \otimes_K K_v$, and if $\alpha_1, \dots, \alpha_r \in \C_v$, then 
\[
\| \tau(\alpha_1) s_1 + \dots + \tau(\alpha_r) s_r \|_v = \| \alpha_1 s_1 + \dots + \alpha_r s_r \|_v
\]
for all $\tau \in \Gal(\C_v/K_v)$. 
\item For $v \in \Sigma_K$ finite, $\| \cdot \|_v$ satisfies $\| s + s' \|_v \leq \max( \| s \|_v, \| s' \|_v )$. 
\end{enumerate}

Let $V$ be an adelic vector bundle over $K$. The \textit{naive adelic height} function on $V$ is given by
\begin{align*}
\lambda : V &\to \R \\
s &\mapsto -\sum_{v \in \Sigma_K} k_v \log \| s \|_v,
\end{align*}
where $k_v := [K_v : \Q_p]$ for $p$ the rational place lying under $v$. Using $\lambda$, we equip $V$ with the following filtration: given $a \in \R$, set 
\[
V^a := \text{span}_K\{ s \in V : \lambda(s) \geq a \}. 
\]
This is a decreasing filtration on $V$ indexed by the real numbers with the property that $V^a = V$ for $a \ll 0$, and $V^a = 0$ for $a \gg 0$. If $\dim_K V \geq 1$, the \textit{naive adelic successive maxima} of $V$ are the real numbers $\lambda_1, \dots, \lambda_{\dim_K V}$, where 
\[
\lambda_{i} := \sup \{ a \in \R : \dim_K V^a \geq i \}. 
\]

\begin{Ex} 
\label{ad-Q-vb-example}
Suppose $\mathscr{V}$ is a finite, free $\Z$-module of rank $d \geq 1$, equipped with a norm $\| \cdot \|_\infty$ on the $\C$-vector space $\mathscr{V} \otimes_\Z \C$. Then $V := \mathscr{V} \otimes_\Z \Q$ has a natural structure of an adelic vector bundle over $\Q$ as follows. For $v = \infty$, we use the given norm $\| \cdot \|_\infty$ on $V \otimes_\Q \C$, and for $v = p$ finite, define $\| \cdot \|_p : V \otimes_\Q \C_p \to \R$ by
\begin{equation}
\label{local-norm-finite-place}
\| s \|_p := \inf_{\alpha \in \C_p} \{ |\alpha|_p : s \in \alpha \left( \mathscr{V} \otimes_\Z R_p \right) \},
\end{equation}
where $R_p$ is the closed unit ball of $\C_p$. If $s_1, \dots, s_d$ denotes any $\Z$-basis of $\mathscr{V}$, then if we express $s \in V \otimes_\Q \C_p$ as $s = \sum_i \alpha_i s_i$ with $\alpha_i \in \C_p$, we have $\| s \|_p = \max_i (|\alpha_i|_p)$. 

If $\lambda_1, \dots, \lambda_d$ denote the successive maxima for $V$ with respect to the filtration induced by the naive adelic height as above, then 
\[
\lambda_i = -\log(\mu_i),
\]
where $\mu_1, \dots, \mu_d$ are the successive minima for the lattice $\mathscr{V} \subseteq \mathscr{V} \otimes_\Z \R$, where $\mathscr{V} \otimes_\Z \R$ is equipped with the norm induced from that on $\mathscr{V} \otimes_\Z \C$. 
\end{Ex}

\subsection{Adelic $\Q$-vector bundle structure on rational cusp forms}
\label{ad-Q-vb-rat-cusp-forms-ss}
Let $\Gamma \subseteq \Gamma(1)$ be a finite index subgroup and let $X(\Gamma), \mathscr{X}(\Gamma), \mathscr{L}$, and $D$ be as in \S \ref{integral-models-ss}. Let 
\begin{align*}
\mathscr{M}_k &:= H^0(\mathscr{X}(\Gamma), \mathscr{L}^{\otimes k}), \\
M_k &:= \mathscr{M}_k \otimes_\Z \Q, \\
S_k &:= H^0(X(\Gamma), \mathscr{L}_\Q^{\otimes k}(-D)),  \\
\mathscr{S}_k &:= \mathscr{M}_k \cap S_k,
\end{align*}
where the last intersection takes place in $M_k$. These correspond to the spaces of integral modular forms, rational modular forms, rational cusp forms, and integral cusp forms of level $\Gamma$ and weight $12k$, respectively. 

Note that $\mathscr{S}_k$ is a finite free $\Z$-module with $S_k = \mathscr{S}_k \otimes_\Z \Q$. By example \ref{ad-Q-vb-example}, $S_k$ has a natural structure of an adelic vector bundle over $\Q$, with respect to the norms $\| \cdot \|_{k,p}$, as in equation \ref{local-norm-finite-place} for a finite place $p$, and $\| \cdot \|_{k, \infty}$, the normalized $L^2$ norm from \S \ref{rational-Pet-met-ss}. We denote the naive adelic height function on $S_k$ by $\lambda_k$, the induced filtration by $(S_k^a)_{a \in \R}$, and the associated successive maxima by $\lambda_{k,i}$ (for $i = 1, \dots, \dim_\Q S_k$) (as in \S \ref{adelic-vb-sm-ss}). These maxima are equal to the ones in theorem \ref{main-theorem-statement-intro} by example \ref{ad-Q-vb-example}.

\section{Proof of theorem \ref{main-theorem-statement-intro}}
\label{proof-of-thm-1-section}
A key ingredient in the proof of theorem \ref{main-theorem-statement-intro} is theorem 3.4.3 in \cite{Chen-1} concerning quasi-filtered graded algebras. We apply this result to the graded algebras $B_L$ in proposition \ref{nu-L-conv-prop}. To do so, we need to get a uniform upper bound on the normalized height $\lambda_k(f)/k$ of any non-zero $f \in S_k$, and show that the algebra $B_L$ is \textit{quasi-filtered} with respect to an appropriate function (see definition 3.2.1 of \cite{Chen-1}). 

The uniform upper bound is proved in \S \ref{upper-bound-heights-ss}. The key ingredients for this are formulas for the intersection numbers of logarithmically singular line bundles as in \cite{Kuhn} and heights of cycles with respect to a smoothly metrized line bundle as in \cite{Bost-Gillet-Soule} (see proposition \ref{height-upper-bound-prop}), along with a version of ``Gromov's lemma'' proved in \cite{FJK} for cusp forms with respect to the Petersson norm and later improved (and generalized) in \cite{AMM-Bergman-Kernels} (see proposition \ref{Gromov-lemma-prop}). Proposition \ref{Gromov-lemma-prop} is also key to lemma \ref{quasi-filtered-lemma}, which in turn is used to show that the $B_L$ are quasi-filtered. Using all of this, we show that the $\nu_k$ converge vaguely to a sub-probability Borel measure $\nu$ on $\R$. 

Finally, in \S \ref{prob-measure-proof-ss}, we derive explicit lower bounds on the successive maxima $\lambda_{k,i}$ for $k$ large by constructing a basis for $S_k$ of a particular shape, and doing explicit calculations with it. These lower bounds then imply that $\nu$ is a probability measure (and hence the vague convergence is also weak convergence). 

We follow the general structure of \S 3.6 and \S 3.7 of \cite{TGS} in \S \ref{cusp-forms-vanishing-higher-orders-ss} and \S \ref{prob-measure-proof-ss}. 

Throughout this section, we keep the notation from \S \ref{ad-Q-vb-rat-cusp-forms-ss}. 

\subsection{Upper bound on heights}
\label{upper-bound-heights-ss}

\begin{Lemma}
\label{basic-scaling-lemma}
Let $f \in S_k$ be a non-zero element. There exists a rational number $\beta$ such that $\beta f \in \mathscr{S}_k$ and $\beta f \not\in B \cdot \mathscr{S}_k$ for all positive integers $B \geq 2$. Furthermore, for any $f \in \mathscr{S}_k$ such that $f \not\in B \cdot \mathscr{S}_k$ for all integers $B > 1$, we have $\| f \|_{k,p} = 1$ for all finite places $p$, and hence, $\lambda_k(f) = -\log \|f\|_{k, \infty}$. 
\end{Lemma}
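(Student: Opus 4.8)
The plan is to prove the two assertions separately, starting with the existence of the scaling factor $\beta$. Since $\mathscr{S}_k$ is a finite free $\Z$-module with $\mathscr{S}_k \otimes_\Z \Q = S_k$, for any non-zero $f \in S_k$ there is certainly a non-zero rational $\gamma$ with $\gamma f \in \mathscr{S}_k$ (clear denominators). Having landed inside the lattice, the element $\gamma f$ lies in $B \cdot \mathscr{S}_k$ for only finitely many integers $B \geq 2$ — indeed the set of such $B$ is closed under divisors and bounded by the largest integer dividing $\gamma f$ in $\mathscr{S}_k$, which exists because $\mathscr{S}_k / \Z \cdot (\gamma f)$ has a well-defined torsion-freeness obstruction (equivalently: write $\gamma f$ in terms of a $\Z$-basis of $\mathscr{S}_k$ and take $m = \gcd$ of its coordinates, which is finite). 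Then replace $\gamma$ by $\gamma / m$: the element $\beta f := (\gamma/m) f$ is the "primitive" multiple of $f$ in $\mathscr{S}_k$, and by construction it is not divisible by any integer $B \geq 2$ in $\mathscr{S}_k$.

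For the second assertion, I would unwind the definition of the local norm $\|\cdot\|_{k,p}$ at a finite place $p$ from equation \eqref{local-norm-finite-place} in example \ref{ad-Q-vb-example}: if $s_1, \dots, s_d$ is a $\Z$-basis of $\mathscr{S}_k$ and $f = \sum_i a_i s_i$ with $a_i \in \Z$, then $\|f\|_{k,p} = \max_i |a_i|_p = p^{-\min_i v_p(a_i)}$, where $v_p$ is the $p$-adic valuation. Now the hypothesis that $f$ is not divisible by any integer $B > 1$ in $\mathscr{S}_k$ is exactly the statement that $\gcd(a_1, \dots, a_d) = 1$, hence for every prime $p$ there is some index $i$ with $v_p(a_i) = 0$, giving $\min_i v_p(a_i) = 0$ and therefore $\|f\|_{k,p} = 1$. (One should note $\|f\|_{k,p} \le 1$ automatically since $f \in \mathscr{S}_k$, so the content of the argument is the lower bound.) Feeding this into the definition of the naive adelic height over $\Q$ (where $k_v = 1$ for all $v$),
\[
\lambda_k(f) = -\sum_v \log \|f\|_{k,v} = -\log \|f\|_{k,\infty} - \sum_{p} \log(1) = -\log \|f\|_{k,\infty},
\]
which is the desired formula.

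I do not expect any serious obstacle here — this is a bookkeeping lemma translating between lattice divisibility and $p$-adic norms. The only point requiring a little care is making precise that the "largest integer dividing $f$ in $\mathscr{S}_k$" is well-defined and finite; I would handle this cleanly by passing to coordinates with respect to a $\Z$-basis and invoking that $\gcd$ of finitely many integers (not all zero) exists, rather than phrasing it module-theoretically. A secondary minor point is to confirm the normalization convention $k_v = [\Q_v : \Q_p] = 1$ used in \S \ref{adelic-vb-sm-ss} so that the finite-place terms genuinely drop out with no weighting; this is immediate from the setup in example \ref{ad-Q-vb-example} where the base field is $\Q$.
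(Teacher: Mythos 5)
Your proof is correct and follows essentially the same route as the paper: express $f$ in coordinates with respect to a $\Z$-basis of $\mathscr{S}_k$, scale so the coordinates become coprime integers, and read off $\|f\|_{k,p} = \max_i|a_i|_p = 1$ directly from the definition of the local norm at finite places. The paper phrases the choice of $\beta$ in one step ($\beta\alpha_i \in \Z$ with $\gcd = 1$) rather than your two-step clear-then-divide-by-$\gcd$, but the content is identical.
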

\begin{proof}
Take any $\Z$-basis $f_1, \dots, f_d$ of $\mathscr{S}_k$, and let $f = \sum_i \alpha_i f_i$ with $\alpha_i \in \Q$. Then it is clear that there is a rational number $\beta$ such that $\beta \alpha_i \in \Z$ for all $i$ and $\gcd(\beta \alpha_1, \dots, \beta \alpha_d) = 1$. It is also clear that $\beta f \in \mathscr{S}_k$, and $\beta f \not\in B \cdot \mathscr{S}_k$ for all integers $B > 1$. 

Finally given any $f \in \mathscr{S}_k$ such that $f \not\in B \cdot \mathscr{S}_k$ for all integers $B > 1$, writing $f = \sum_i \alpha_i f_i$, we have $\alpha_i \in \Z$ and $\gcd(\alpha_1, \dots, \alpha_d) = 1$. By example \ref{ad-Q-vb-example}, we conclude that $\|f\|_{k,p} = \max_i (|\alpha_i|_p) = 1$ for all finite places $p$. 
\end{proof}

Let $\Gamma \subseteq \Gamma(1)$ be a finite index subgroup, and let $S_{2k}(\Gamma)_\C$ denote the space of complex cusp forms for $\Gamma$ of weight $2k$. Let $\lan \cdot, \cdot \ran_\text{Pet}$ denote the Petersson inner product on $S_{2k}(\Gamma)_\C$ as in \S \ref{cx-Pet-metric-ss}. For $f \in S_{2k}(\Gamma)$, denote by
\[
\| f \|_{\sup} := \sup_{z \in \Gamma \backslash \mf{h}} |f(z)| (4\pi \im(z))^{k}
\]
the sup-norm of $f$ with respect to the Petersson metric. 

\begin{Prop}
\label{Gromov-lemma-prop}
Let $\Gamma \subseteq \Gamma(1)$ be a finite index subgroup, and let $S_{2k}(\Gamma)_\C$ denote the space of complex cusp forms for $\Gamma$ of weight $2k$. There exist positive constants $c_1$ and $c_2$, with $c_2$ independent of $\Gamma$, such that for any $0 \neq f \in S_{2k}(\Gamma)_\C$, we have
\[
c_1 \|f\|_{\emph{Pet}} \leq \|f\|_{\sup} \leq c_2 k^{3/4} \|f\|_{\emph{Pet}}. 
\]
\end{Prop}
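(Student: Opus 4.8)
The plan is to prove the two inequalities by quite different means. The lower bound $c_1 \|f\|_{\text{Pet}} \leq \|f\|_{\sup}$ is the easy direction and requires no growth in $k$: since $\|f\|_{\text{Pet}}^2 = \frac{1}{d_\Gamma}\int_{\mathcal{F}_\Gamma} |f(z)|^2 (4\pi \im z)^{2k} \frac{dx\,dy}{y^2}$ is an integral of the pointwise quantity $|f(z)|^2(4\pi\im z)^{2k}$ whose supremum is $\|f\|_{\sup}^2$, we get $\|f\|_{\text{Pet}}^2 \leq \|f\|_{\sup}^2 \cdot \frac{1}{d_\Gamma}\int_{\mathcal{F}_\Gamma} \frac{dx\,dy}{y^2} = \|f\|_{\sup}^2 \cdot \frac{\operatorname{vol}(\Gamma\backslash\mathfrak{h})}{d_\Gamma}$. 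The volume of $\Gamma\backslash\mathfrak{h}$ is $d_\Gamma$ times the volume $\frac{\pi}{3}$ of $\Gamma(1)\backslash\mathfrak{h}$, so the ratio is $\frac{\pi}{3}$, giving $\|f\|_{\text{Pet}} \leq \sqrt{\pi/3}\,\|f\|_{\sup}$, i.e. one may take $c_1 = \sqrt{3/\pi}$; this is even independent of $\Gamma$ and $k$, though we only need it for fixed $\Gamma$ with $c_1$ allowed to depend on $\Gamma$.

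The upper bound $\|f\|_{\sup} \leq c_2 k^{3/4}\|f\|_{\text{Pet}}$ with $c_2$ independent of $\Gamma$ is the substantive part, and I would obtain it by reduction to $\Gamma(1)$ rather than by reproving the sup-norm bound from scratch. The point is that $\Gamma\backslash\mathfrak{h}$ is covered by $d_\Gamma$ translates of a fundamental domain $\mathcal{F}_{\Gamma(1)}$ for $\Gamma(1)$; given $0\neq f\in S_{2k}(\Gamma)_\C$, at any $z\in\mathfrak{h}$ the quantity $|f(z)|(4\pi\im z)^k$ is $\Gamma$-invariant (as $f|_{2k}\gamma = f$), so $\|f\|_{\sup} = \sup_{z\in\mathcal{F}_{\Gamma(1)}} |f(z)|(4\pi\im z)^k$. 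Now I would invoke the known sup-norm versus $L^2$-norm comparison for $\Gamma(1)$ in the form established in \cite{FJK} and sharpened in \cite{AMM-Bergman-Kernels}: for any $z_0$ there is a bound $|f(z_0)|^2(4\pi\im z_0)^{2k} \leq C k^{3/2} \cdot \frac{1}{\operatorname{vol}}\int_{B} |f(z)|^2(4\pi\im z)^{2k}\frac{dx\,dy}{y^2}$ where $B$ is a fixed-radius hyperbolic ball around $z_0$ and $C$ is an absolute constant — this is the Bergman-kernel / sub-mean-value estimate whose $k$-dependence is $k^{d/2}$ in (real) dimension $d=2$, hence $k$, after taking square roots $k^{1/2}$... wait, the stated exponent is $3/4$, which corresponds to $k^{3/2}$ before the square root; the extra half-power over the naive heat-kernel bound $k^{1/2}$ comes from the cusps, where the relevant local model forces the weaker estimate, and this is exactly what is proved in \cite{FJK}/\cite{AMM-Bergman-Kernels}. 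The ball $B$ around any $z_0\in\mathcal{F}_{\Gamma(1)}$ injects into $\Gamma\backslash\mathfrak{h}$ up to bounded overlap, so $\frac{1}{\operatorname{vol}(B)}\int_B |f|^2(4\pi\im z)^{2k}\frac{dx\,dy}{y^2} \leq \frac{\operatorname{const}}{\operatorname{vol}(B)}\cdot d_\Gamma\|f\|_{\text{Pet}}^2$; but one must be careful that the $d_\Gamma$ here cancels against the normalization $\frac{1}{d_\Gamma}$ built into $\|\cdot\|_{\text{Pet}}$, which is precisely why the constant $c_2$ ends up independent of $\Gamma$. Combining gives $\|f\|_{\sup}^2 \leq c_2^2 k^{3/2}\|f\|_{\text{Pet}}^2$.

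The main obstacle is making the cusp contribution to the Bergman-kernel estimate genuinely uniform in $\Gamma$ and pinning down that it produces the exponent $k^{3/2}$ (not $k$) while the interior contribution only gives $k$. For a self-contained treatment one would localize near each cusp $S_j$ of $\Gamma$ using the local coordinate $q = e^{2\pi i z/w_j}$ (with $w_j$ the width), on which a weight-$2k$ cusp form looks like $\sum_{n\geq 1} a_n q^n$, estimate $\sup_z |f(z)|(4\pi\im z)^k$ on a neighborhood of the cusp against the local $L^2$ mass using the explicit cusp Bergman kernel, and check the constants do not depend on $w_j$ or on the number of cusps. Since the proposition is attributed to \cite{FJK} and \cite{AMM-Bergman-Kernels}, the cleanest route in the paper is to cite those results directly for the uniform upper bound and supply only the elementary integral argument for the lower bound as above; I would structure the proof that way, stating precisely which theorem of \cite{AMM-Bergman-Kernels} (its Theorem 1.7, cited in the introduction) gives the $k^{3/4}$ with a $\Gamma$-independent constant, and noting that the normalization of $\|\cdot\|_{\text{Pet}}$ by $1/d_\Gamma$ is exactly what removes the level dependence.
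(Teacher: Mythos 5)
Your lower bound matches the paper's (it uses exactly the comparison $\|f\|_{\text{Pet}}^2 \le \|f\|_{\sup}^2 \cdot \tfrac{1}{d_\Gamma}\int_{X(\Gamma)_\C}d\mu_\Gamma$; you just evaluate the ratio to $\pi/3$, which the paper leaves symbolic). The upper bound, though, is where your proposal diverges from the paper and where it has real problems.

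First, the reduction step is false as stated. You claim $\|f\|_{\sup} = \sup_{z\in\mathcal{F}_{\Gamma(1)}}|f(z)|(4\pi\operatorname{Im} z)^k$, but for $f\in S_{2k}(\Gamma)_\C$ the quantity $|f(z)|(4\pi\operatorname{Im} z)^k$ is only $\Gamma$-invariant, not $\Gamma(1)$-invariant, so the supremum over $\mathfrak{h}$ equals the supremum over a fundamental domain for $\Gamma$ (a union of $d_\Gamma$ translates of $\mathcal{F}_{\Gamma(1)}$), not over $\mathcal{F}_{\Gamma(1)}$ itself. Second, and more seriously, the ball-based sub-mean-value route does not deliver a $\Gamma$-independent $c_2$. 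If $B$ is a fixed-radius hyperbolic ball injecting into $\Gamma\backslash\mathfrak{h}$, then
\[
\int_B |f|^2(4\pi\operatorname{Im} z)^{2k}\,\tfrac{dx\,dy}{y^2} \le \int_{\Gamma\backslash\mathfrak{h}} |f|^2(4\pi\operatorname{Im} z)^{2k}\,\tfrac{dx\,dy}{y^2} = d_\Gamma\|f\|_{\text{Pet}}^2,
\]
since $\|\cdot\|_{\text{Pet}}$ is defined with the factor $1/d_\Gamma$. The $d_\Gamma$ you see is exactly that factor being undone; there is nothing left for it to ``cancel against,'' so your estimate yields $\|f\|_{\sup}^2 \le C d_\Gamma k^{3/2}\|f\|_{\text{Pet}}^2$, a constant that grows with the index. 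The paper avoids this entirely: it takes an orthonormal basis $\{f_j\}$ of $S_{2k}(\Gamma)_\C$ for the (already index-normalized) Petersson inner product, forms the Bergman kernel $B_k^\Gamma(z)=\sum_j|f_j(z)|^2(4\pi\operatorname{Im} z)^{2k}$, observes by Cauchy--Schwarz that $\|f\|_{\sup}^2/\|f\|_{\text{Pet}}^2 \le \sup_z B_k^\Gamma(z)$, and then invokes Theorem 1.7 of \cite{AMM-Bergman-Kernels} directly: that theorem proves $\sup_z B_k^\Gamma(z)=(k/\pi)^{3/2}+O(k)$ with the implicit constant independent of $\Gamma$, precisely because the Bergman kernel it estimates carries the $1/d_\Gamma$ normalization internally. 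The uniformity in $\Gamma$ is an input from that cited theorem, not something to be re-derived from a ball injectivity argument, and the $k^{3/4}$ exponent comes from the supremum of this globally normalized kernel being attained near the cusps, as that paper shows.
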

\begin{proof}
Let $\mu_\Gamma$ be the measure on $X(\Gamma)_\C$ induced from the hyperbolic volume form $\frac{dx dy}{y^2}$ on $\mf{h}$ with coordinate $z = x + iy$. For the first inequality, note that
\[
\|f\|_{\text{Pet}}^2 = \frac{1}{d_\Gamma} \int_{X(\Gamma)_\C} |f(z)|^2 (4\pi \im(z))^{2k} \ d\mu_\Gamma(z) \leq \|f\|_{\sup}^2 \left( \frac{1}{d_\Gamma} \int_{X(\Gamma)_\C} d\mu_\Gamma \right),
\]
so we may let $c_1 = (\frac{1}{d_\Gamma}\int_{X(\Gamma)_\C} d\mu_\Gamma)^{-1/2}$. 

For the other direction, let $\{f_1, \dots, f_d\}$ be an orthonormal basis for $S_{2k}(\Gamma)_\C$ for the Petersson inner product. Note that by our normalization of the Petersson inner product (i.e. the inclusion of the $(4\pi)^{2k}$ factor), $\{ (4\pi)^k f_1, \dots, (4 \pi)^k f_d \}$ is an orthonormal basis for the classical Petersson inner product. Then, for $z \in \mf{h}$, define
\[
B_{k}^{\Gamma}(z) := \sum_{j=1}^d |(4\pi)^k f_j(z)|^2 \im(z)^{2k} = \sum_{j=1}^d |f_j(z)|^2 (4 \pi \im(z))^{2k}
\]
as in \cite{AMM-Bergman-Kernels}. Note that for any $0 \neq f \in S_{2k}(\Gamma)_\C$, if $f = \sum_{j=1}^d \alpha_j f_j$, then $\|f\|_{\text{Pet}}^2 = \sum_{j=1}^d |\alpha_j|^2$, and for any $z \in \mf{h}$, 
\begin{align*}
|f(z)|^2 (4\pi\im(z))^{2k} &= \left| \sum_{j=1}^d \alpha_j f_j(z) \right|^2 (4\pi \im(z))^{2k} \\
&\leq \left( \sum_{j=1}^d |\alpha_j|^2 \right) \left(\sum_{j=1}^d |f_j(z)|^2 \right) (4\pi \im(z))^{2k}. 
\end{align*}
Hence, 
\[
\frac{|f(z)|^2 (4\pi \im(z))^{2k}}{\|f\|_{\text{Pet}}^2} \leq B_{k}^\Gamma(z),
\]
and 
\[
\frac{\|f\|_{\sup}^2}{\|f\|_{\text{Pet}}^2} = \sup_{z \in \Gamma \backslash \mf{h}} \frac{|f(z)|^2 (4\pi \im(z))^{2k}}{\|f\|_{\text{Pet}}^2} \leq \sup_{z \in \Gamma \backslash \mf{h}}  B_{k}^\Gamma(z). 
\]
By theorem 1.7 in \cite{AMM-Bergman-Kernels},
\[
\sup_{z \in \Gamma \backslash \mf{h}} B_{k}^\Gamma (z) = \left(\frac{k}{\pi}\right)^{3/2} + O(k). 
\]
In particular, there exists a constant $c_2$ (independent of $k$ and $\Gamma$) which gives us the desired bound. 
\end{proof}

We follow very closely the proof of lemma 4.1.7 in \cite{Chen-1} for the following result. 
\begin{Prop}
\label{height-upper-bound-prop}
There is a constant $C$ such that for any $0 \neq f \in S_k$, $\lambda_k(f) \leq Ck$. 
\end{Prop}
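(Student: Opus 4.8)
The plan is to bound the naive adelic height $\lambda_k(f)$ by working with a carefully chosen representative. By Lemma \ref{basic-scaling-lemma}, after scaling we may assume $f \in \mathscr{S}_k$ is primitive, i.e. $f \notin B \cdot \mathscr{S}_k$ for all integers $B > 1$; scaling does not change $\lambda_k$ by the product formula, and for such $f$ we have $\lambda_k(f) = -\log \|f\|_{k,\infty}$. So it suffices to produce a lower bound $\|f\|_{k,\infty} \geq e^{-Ck}$ for primitive integral cusp forms $f$, or equivalently an upper bound on $-\log\|f\|_{k,\infty}$ that is linear in $k$.

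First I would pass from the $L^2$ (Petersson) norm $\|f\|_{k,\infty}$ to the sup-norm using Proposition \ref{Gromov-lemma-prop}: since $\|f\|_{k,\infty}^2 = \tfrac{1}{[E:\Q]}\sum_\sigma \|f_\sigma \Delta^k\|_{\mathrm{Pet}}^2$ and each $f_\sigma \Delta^k$ is a weight-$12k$ cusp form for $\Gamma_\sigma$, the first inequality there gives $\|f_\sigma\Delta^k\|_{\mathrm{Pet}} \geq c_1 \|f_\sigma \Delta^k\|_{\sup}^{-1}$... wait, rather $\|f\|_{k,\infty} \gtrsim \max_\sigma \|f_\sigma\Delta^k\|_{\mathrm{Pet}} \gtrsim \max_\sigma \|f_\sigma\Delta^k\|_{\sup} / (c_2 (6k)^{3/4})$, so it is enough to lower-bound the sup-norm of $f_\sigma \Delta^k$ over $\mathscr{X}(\Gamma)(\C)$ by something like $e^{-C'k}$ (the polynomial factor $(6k)^{3/4}$ contributes only $O(\log k)$ to $-\log\|f\|_{k,\infty}$, hence is absorbed into $Ck$ for $k$ large, and the finitely many small $k$ are handled trivially). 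Equivalently, I need an upper bound on the sup-norm of the Petersson metric of the section $f$ of $\ov{\mathscr{L}}^{\otimes k}$, i.e. on $\log\|f\|_{k,\sup,\infty}$, that is linear in $k$.

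This is where the arithmetic intersection theory enters, following Lemma 4.1.7 of \cite{Chen-1}. The idea is: $f$ is a global section of $\mathscr{L}^{\otimes k}$ on $\mathscr{X}(\Gamma)$ (vanishing along the cusps), so $\mathrm{div}(f)$ is an effective cycle. One compares the arithmetic degree of $\ov{\mathscr{L}}^{\otimes k}$ restricted to a suitable arithmetic curve (or uses the height of the cycle $\mathrm{div}(f)$ with respect to a \emph{smooth} auxiliary metrized line bundle, via the height formulas of Bost--Gillet--Soulé \cite{Bost-Gillet-Soule}) against the geometry coming from K\"uhn's intersection theory for the log-singular bundle $\ov{\mathscr{L}}$ \cite{Kuhn}. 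Concretely, choose a smooth hermitian metric $\ov{\mathscr{L}}_\infty$ on $\mathscr{L}$; the section $f$, being integral and primitive, has $-\log\|f\|_{v} = 0$ at finite places and $\|f\|_{\infty,\mathrm{smooth}}$ controlled by the arithmetic degree $\widehat{\deg}(\ov{\mathscr{L}}_\infty^{\otimes k}|_{\mathrm{div}f})$ plus the difference between the smooth and Petersson metrics — and that difference is a fixed (log-singular but integrable) function independent of $k$, contributing $O(k)$ since we take the $k$-th power. The arithmetic self-intersection numbers $\widehat{\deg}$ grow linearly in $k$ because $\ov{\mathscr{L}}$ is a fixed bundle raised to the $k$-th power and the relevant intersection pairing is multilinear, so all terms are $O(k)$.

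The main obstacle I anticipate is bookkeeping the log-singular contributions correctly: the Petersson metric on $\ov{\mathscr{L}}$ is singular at cusps and elliptic points, so one cannot naively apply the Bost--Gillet--Soulé formula, which is stated for smooth (or continuous) metrics. The fix is to interpose a smooth metric, apply the classical height machinery there, and then bound the defect $\int \log(\|\cdot\|_{\mathrm{Pet}}/\|\cdot\|_{\mathrm{smooth}})$ against the (fixed) section $\mathrm{div}(f)$ — using that $f$ vanishes along the cusps so the integrand is controlled near the singularities, exactly as K\"uhn's framework and the generalized arithmetic intersection numbers permit. Verifying that this defect is $O(k)$ uniformly over all $f$, and that the finitely many archimedean components $X(\Gamma_\sigma)_\C$ all obey the same bound with a single constant $C$, is the delicate point; everything else is multilinearity of intersection numbers and the translation between sup-norm and the arithmetic degree of $\ov{\mathscr{L}}^{\otimes k}$ restricted to $\mathrm{div}(f)$.
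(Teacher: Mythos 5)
Your high-level plan is right — scale to a primitive integral section so $\lambda_k(f)=-\log\|f\|_{k,\infty}$, use Proposition~\ref{Gromov-lemma-prop} to reduce the $L^2$ bound to a sup-norm bound at the cost of $O(\log k)$, and invoke K\"uhn's intersection theory together with the Bost--Gillet--Soul\'e height machinery on $\mathrm{div}(f)$. These are exactly the paper's ingredients, and you even flag the model (Chen's Lemma 4.1.7). But the core arithmetic-intersection step is sketched along the wrong lines, and I think it would not go through as stated.

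Two concrete problems with your plan to ``choose a smooth hermitian metric $\ov{\mathscr{L}}_\infty$ on $\mathscr{L}$'' and then account for a defect. First, $\mathscr{L}$ is the pullback of $\mc{O}_{\P^1_\Z}(\ov{\infty})$ through the composite $\mathscr{X}(\Gamma)\to X(\Gamma)_\Z\to\P^1_\Z$, whose first leg is a desingularization that may contract curves; so $\mathscr{L}$ is typically only nef, not ample, on $\mathscr{X}(\Gamma)$, and a smooth metric on it cannot be expected to be arithmetically ample. The positivity you need (nonnegativity of heights of effective cycles) requires an arithmetically \emph{ample} input, and the paper gets it for free by taking an unrelated auxiliary bundle $\ov{L}=\Phi_\Z^*\ov{\mc{O}_{\P^N_\Z}(1)}$ from a projective embedding; you should do the same rather than metrize $\mathscr{L}$ itself. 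Second, the quantity you write, $\widehat{\deg}(\ov{\mathscr{L}}_\infty^{\otimes k}|_{\mathrm{div} f})$, has the wrong growth: $\mathrm{div}_{\mathscr{L}^{\otimes k}}(f)$ has degree growing linearly in $k$, so pairing it against $\ov{\mathscr{L}}_\infty^{\otimes k}$ gives $O(k^2)$, not the $O(k)$ you need. The paper avoids this by pairing the effective cycle $\mathrm{div}_{\mathscr{L}^{\otimes k}}(f)$ against the \emph{fixed} bundle $\ov{L}$ (not a $k$-th power), so $h_{\ov{L}}(\mathrm{div}_{\mathscr{L}^{\otimes k}}(f))\geq 0$ by Bost--Gillet--Soul\'e Proposition~3.2.4, while bilinearity gives $\ov{L}\cdot\ov{\mathscr{L}}^{\otimes k}=k(\ov{L}\cdot\ov{\mathscr{L}})=O(k)$; rearranging K\"uhn's formula for the generalized pairing $\ov{L}\cdot\ov{\mathscr{L}}^{\otimes k}$ then gives $\int\log|f|_{\mathrm{Pet}}\,c_1(\ov{L})\geq -Ck$, and since $c_1(\ov{L})>0$ this yields $\log\sup|f|_{\mathrm{Pet}}\geq C'k$. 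With the auxiliary $\ov{L}$ in hand there is no defect function to control at all: the Petersson metric appears only inside the integral $\int\log|f|_{\mathrm{Pet}}\,c_1(\ov{L})$, whose log-singularities are integrable against the smooth form $c_1(\ov{L})$. (You also have a sign slip when you say you need ``an upper bound on $\log\|f\|_{k,\sup,\infty}$'' --- what you actually need is a lower bound on $\log\|f\|_{\sup}$, equivalently an upper bound on $-\log\|f\|_{\sup}$; this is a minor issue but worth flagging since the direction of the inequality is the whole point.)
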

\begin{proof}
Since $\lambda_k(f) = \lambda_k(\alpha f)$ for any non-zero $\alpha \in \Q^\times$, we may suppose, after appropriately scaling $f$, that $f \in \mathscr{S}_k$ and $f \not\in B \cdot \mathscr{S}_k$ for any positive integer $B > 1$. Then by lemma \ref{basic-scaling-lemma}, $\lambda_k(f) = -\log \|f\|_{k, \infty}$. 

Taking any projective (closed) embedding $\Phi_\Z : \mathscr{X}(\Gamma) \hookrightarrow \P^N_\Z$, we let $\ov{L} := \Phi_\Z^* \ov{\mc{O}_{\P^N_\Z}(1)}$, where $\ov{\mc{O}_{\P^N_\Z}(1)}$ refers to $\mc{O}_{\P^N_\Z}(1)$ endowed with the Fubini-Study metric. Then $\ov{L}$ is arithmetically ample with $c_1(\ov{L}) > 0$. Now take any global section $\ell \in H^0(\mathscr{X}(\Gamma), L)$ such that $\text{div}_{L}(\ell)$ and $\text{div}_{\mathscr{L}^{\otimes k}}(f)$ don't share any common horizontal divisors (i.e. their divisors on the generic fiber $X(\Gamma)$ have disjoint support). Then the generalized intersection number $\ov{L} \cdot \ov{\mathscr{L}}^{\otimes k}$ in the sense of equation (3.10) in \cite{Kuhn} is given by
\[
\ov{L} \cdot \ov{\mathscr{L}}^{\otimes k} = ( \ell \cdot f)_{\text{fin}} + \lan \ell \cdot f \ran_\infty,
\]
where $(\ell \cdot f)_{\text{fin}} = (\text{div}_{L}(\ell) \cdot \text{div}_{\mathscr{L}^{\otimes k}}(f))_{\text{fin}}$ is equal to  
\[
(\text{div}_{L}(\ell) \cdot \text{div}_{\mathscr{L}^{\otimes k}}(f))_{\text{fin}} := \sum_{i,j=0}^1 (-1)^{i+j} \log \# H^i\left( \mathscr{X}(\Gamma), \text{Tor}_j^{\mc{O}_{\mathscr{X}(\Gamma)}}( \mc{O}_{\text{div}_{L}(\ell)}, \mc{O}_{\text{div}_{\mathscr{L}^{\otimes k}}(f)}) \right)
\]
($\mc{O}_{D}$ denotes the structure sheaf for an effective divisor $D \subseteq \mathscr{X}(\Gamma)$), and $\lan \ell \cdot f \ran_\infty$ is given in our case by
\begin{align*}
\lan \ell \cdot f \ran_\infty = - \sum_{P \in \mathscr{X}(\Gamma)(\C)} n_P \log |\ell(P)|   - \int_{\mathscr{X}(\Gamma)(\C)} \log |f|_{\text{Pet}} \cdot c_1(\ov{L}),
\end{align*}
if $\text{div}_{\mathscr{L}^{\otimes k}_\C}(f) = \sum_{P \in \mathscr{X}(\Gamma)(\C)} n_P P$ (lemma 3.9 in \cite{Kuhn}). Note that this expression makes sense since $\text{div}_{\mathscr{L}^{\otimes k}_\C}(f)$ and $\text{div}_{L_\C}(\ell)$ have disjoint support. Then, using bilinearity of the intersection pairing, we have
\[
(\ell \cdot f)_{\text{fin}} - \sum_{P \in \mathscr{X}(\Gamma)(\C)} n_P \log |\ell(P)| = k(\ov{L} \cdot \ov{\mathscr{L}}) + \int_{\mathscr{X}(\Gamma)(\C)} \log |f|_{\text{Pet}} \cdot c_1(\ov{L}),
\]
where the expression on the left is the height $h_{\ov{L}}(\text{div}_{\mathscr{L}^{\otimes k}}(f))$ of the cycle $\text{div}_{\mathscr{L}^{\otimes k}}(f)$ with respect to $\ov{L}$, as defined in 3.1.1 in \cite{Bost-Gillet-Soule} (see also \S 2.3.4 of \cite{Bost-Gillet-Soule}, in particular, equation 2.3.17). Since $\ov{L}$ is arithmetically ample and $\text{div}_{\mathscr{L}^{\otimes k}}(f)$ is effective, $h_{\ov{L}}(\text{div}_{\mathscr{L}^{\otimes k}}(f)) \geq 0$ (proposition 3.2.4 in \cite{Bost-Gillet-Soule}). Hence 
\[
0 \leq k(\ov{L} \cdot \ov{\mathscr{L}}) + \int_{\mathscr{X}(\Gamma)(\C)} \log |f|_{\text{Pet}} \cdot c_1(\ov{L}),
\]
and since $c_1(\ov{L}) > 0$, we get
\[
\log \sup_{z \in \mathscr{X}(\Gamma)(\C)} |f|_{\text{Pet}}(z) \geq C'k
\]
for $C' := -(\ov{L} \cdot \ov{\mathscr{L}})/\int_{\mathscr{X}(\Gamma)(\C)} c_1(\ov{L})$. 

Denote the image of $f$ under the map $H^0(\mathscr{X}(\Gamma), \mathscr{L}^{\otimes k}(-D)) \to H^0(\mathscr{X}(\Gamma)_\C, \mathscr{L}^{\otimes k}_\C(-D_\C))$ by $(f^\sigma)_\sigma$. Suppose $|f|_{\text{Pet}}(z)$ achieves its supremum on the component $X(\Gamma_{\sigma'})_\C$, so that 
\[
\sup_{z \in \mathscr{X}(\Gamma)(\C)} |f|_{\text{Pet}}(z) = \sup_{z \in X(\Gamma_{\sigma'})_\C} |(f^{\sigma'} \Delta^k)(z)| (4\pi \im(z))^{6k} = \|f^{\sigma'} \Delta^k \|_{\sup}. 
\]
Then
\[
\|f\|_{k, \infty}^2 = \frac{1}{[E : \Q]}\sum_{\sigma} \|f^\sigma \Delta^k \|_{\text{Pet}}^2 \geq \frac{1}{[E : \Q]} \|f^{\sigma'} \Delta^k\|_{\text{Pet}}^2 \geq \frac{1}{[E : \Q]} c_2^{-2} k^{-3/2} \|f^{\sigma'} \Delta^k \|_{\sup}^2,
\]
where the last inequality uses proposition \ref{Gromov-lemma-prop}. We conclude that 
\[
\lambda_k(f) = -\log \|f\|_{k, \infty} \leq Ck
\]
for a constant $C$ independent of $k$. 
\end{proof}

\subsection{Cusp forms vanishing to increasing orders at the cusps}
\label{cusp-forms-vanishing-higher-orders-ss} 
For an integer $L \geq 1$, define 
\[
\mc{B}_{L,k} := \mathscr{L}_\Q^{\otimes k} \! \left( - \lceil k/L \rceil D \right)  \subseteq \mathscr{L}_\Q^{\otimes k}(-D). 
\] 
Then for each $\sigma : E \to \C$, 
\[
(\mc{B}_{L,k})_{\C}|_{X(\Gamma_\sigma)_\C} = \mathscr{L}_\C^{\otimes k} \! \left( -\lceil k/L \rceil) D_\C \right)|_{X(\Gamma_\sigma)_\C}
\]
is identified with the line bundle of weight $12k$ cusp forms for $X(\Gamma_\sigma)_\C$ that vanish to order at least $k/L$ at every cusp. 

Let $B_{L,k} := H^0(X(\Gamma), \mc{B}_{L,k})$. We use the inclusion $B_{L,k} \subseteq S_k$ to define a filtration on $B_{L,k}$. Namely, for $a \in \R$, we set the $a$th filtered piece of $B_{L,k}$ to be
\[
B_{L,k}^a := S_{k}^a \cap B_{L,k}. 
\]
This is a decreasing $\R$-filtration on $B_{L,k}$. For $i = 1, \dots, \dim_\Q B_{L,k}$, let 
\[
\lambda_{L,k,i} = \sup \{a \in \R : \dim_\Q B_{L,k}^a \geq i \}
\]
denote the $i$th successive maxima of $B_{L,k}$. Since these are the successive maxima associated to the subspace filtration, the multi-set $\{ \lambda_{L,k,i} \}_{i=1}^{\dim_\Q B_{L,k}}$ is a sub multi-set of $\{ \lambda_{k,i} \}_{i=1}^{\dim_\Q S_k}$. 

Furthermore, let
\[
\wt{\lambda}_k : S_k \to \R \cup \{\infty\}, \quad \wt{\lambda}_{L,k} : B_{L,k} \to \R \cup \{\infty\}
\]
be defined by $\wt{\lambda}_k(f) := \sup \{ a \in \R : f \in S_k^a \}$, and $\wt{\lambda}_{L,k}(f) := \sup \{a \in \R : f \in B_{L,k}^a \}$. These are called the \textit{index functions} of $S_k$ and $B_{L,k}$ for their respective filtrations (as in \S 2 of \cite{Chen-2}). For $f \in B_{L,k}$, we have $\wt{\lambda}_{L,k}(f) = \wt{\lambda}_k(f)$. 

\begin{Rem} 
We could have opted to use the filtration on $B_{L,k}$ obtained from the restriction of the height function $\lambda_k$ on $S_k$, so that for $a \in \R$ the $a$th filtered piece of $B_{L,k}$ would be 
\[
B_{L,k,a} := \text{span}_\Q \{ s \in B_{L,k} : \lambda_k(s) \geq a \}. 
\]
This approach is taken in \cite{TGS}, \S 3.6 (see, in particular, the proof of lemma 3.6.2) and makes proving various estimates (see \cite{TGS}, lemma 3.7.1) rather tricky. By contrast, working with the subspace filtration simplifies these matters significantly - compare lemma \ref{main-estimate-lemma} below to its counterpart, \cite{TGS}, lemma 3.7.1.
\end{Rem}

As mentioned at the start of this section, the following lemma is used to show the algebras $B_L$ in proposition \ref{nu-L-conv-prop} are quasi-filtered. 
\begin{Lemma}
\label{quasi-filtered-lemma}
Let $\psi(k) := \frac{3}{4}\log(k) + \log(c_2) - \log(c_1)/2$ where $c_1$ and $c_2$ are the constants from proposition \ref{Gromov-lemma-prop} for $\Gamma \subseteq \Gamma(1)$. For any collection of elements $f_i \in S_{k_i}$ ($i=1, \dots, n$, with $n \geq 2$), we have 
\[
\wt{\lambda}_{k_1 + \dots + k_n}(f_1 \cdots f_n) \geq \sum_{i=1}^n \left( \wt{\lambda}_{k_i}(f_i) - \psi(k_i) \right). 
\]
\end{Lemma}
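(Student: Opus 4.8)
## Proof proposal

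The plan is to reduce immediately to the case $n=2$ by induction: if the inequality holds for pairs, then writing $g = f_1 \cdots f_{n-1} \in S_{k_1 + \dots + k_{n-1}}$, we get
\[
\wt\lambda_{k_1 + \dots + k_n}(g \cdot f_n) \geq \wt\lambda_{k_1 + \dots + k_{n-1}}(g) - \psi(k_1 + \dots + k_{n-1}) + \wt\lambda_{k_n}(f_n) - \psi(k_n),
\]
and the inductive hypothesis bounds $\wt\lambda_{k_1 + \dots + k_{n-1}}(g)$ below by $\sum_{i=1}^{n-1}(\wt\lambda_{k_i}(f_i) - \psi(k_i))$. This works provided $\psi$ is subadditive in the sense needed, i.e. provided $\psi(k_1 + \dots + k_{n-1}) \leq \sum_{i=1}^{n-1} \psi(k_i)$ — but actually one should be slightly careful: $\psi$ is not literally subadditive because of the constant terms, so it is cleaner to prove the $n$-fold statement directly from the $n=2$ argument by iterating the product one factor at a time and just keeping track of the fact that each multiplication step costs one more $\psi(k_i)$ term. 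I will phrase the induction so that at stage $i$ we multiply by $f_i$ and incur exactly the cost $\psi(k_i)$ for the new factor plus whatever was already accumulated, which gives the stated sum without needing subadditivity of $\psi$.

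So the heart of the matter is the two-factor estimate: for $f \in S_k$ and $g \in S_\ell$ (both nonzero, else the statement is trivial since $\wt\lambda = +\infty$ on the zero section and the filtration pieces are spans), show
\[
\wt\lambda_{k+\ell}(fg) \geq \wt\lambda_k(f) + \wt\lambda_\ell(g) - \psi(k) - \psi(\ell).
\]
Unwinding the definition of the index function and the filtration $S_m^a = \operatorname{span}_\Q\{s : \lambda_m(s) \geq a\}$, and noting $\lambda_m(s) = -\sum_v k_v \log\|s\|_{m,v}$, this will follow from submultiplicativity of the local norms: $\|fg\|_{k+\ell, v} \leq \|f\|_{k,v}\,\|g\|_{\ell,v}$ for $v$ finite, and a bounded-defect version at the archimedean place. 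For finite $v$ submultiplicativity is immediate from the description in Example \ref{ad-Q-vb-example} (the norm is the Gauss/max norm attached to the integral lattice $\mathscr{S}_m$, and $\mathscr{S}_k \cdot \mathscr{S}_\ell \subseteq \mathscr{S}_{k+\ell}$ since a product of integral cusp forms is an integral cusp form). For $v = \infty$ the norm $\|\cdot\|_{k,\infty}$ is an $L^2$ norm, which is not submultiplicative; this is exactly where $\psi$ enters. Here I would use Proposition \ref{Gromov-lemma-prop}: on each archimedean component, $\|h\|_{\mathrm{Pet}} \leq c_1^{-1}\|h\|_{\sup}$ and $\|h\|_{\sup}$ \emph{is} submultiplicative up to the factor coming from $(4\pi \operatorname{im}(z))$-weights being additive in the weight (so $\|h_1 h_2\|_{\sup} \leq \|h_1\|_{\sup}\|h_2\|_{\sup}$ exactly), and then $\|h\|_{\sup} \leq c_2 k^{3/4}\|h\|_{\mathrm{Pet}}$ to convert back. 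Chasing the constants through the normalization $\|f\|_{k,\infty}^2 = \frac{1}{[E:\Q]}\sum_\sigma \|f^\sigma \Delta^k\|_{\mathrm{Pet}}^2$ and taking $-\log$, the loss is exactly $\psi(k) + \psi(\ell)$ with $\psi(k) = \frac34\log k + \log c_2 - \frac12 \log c_1$, which is why $\psi$ is defined that way.

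The main obstacle — and the step requiring the most care — is the archimedean estimate: one must verify that the Gromov-type comparison between $\|\cdot\|_{\sup}$ and $\|\cdot\|_{\mathrm{Pet}}$ interacts correctly with the product of forms living on the (possibly disconnected) complex fiber $\mathscr{X}(\Gamma)(\C) = \bigsqcup_\sigma X(\Gamma_\sigma)(\C)$, i.e. that one can localize to the component achieving the sup and that the weight-$k$ and weight-$\ell$ sup-norms multiply to the weight-$(k+\ell)$ sup-norm pointwise, then integrate. A secondary point is bookkeeping with the normalization constant $\frac{1}{[E:\Q]d_\Gamma}$ and the factors $(4\pi)^{\bullet}$ versus the classical Petersson normalization, but these are absorbed into the constants $c_1, c_2$ (which is precisely why Proposition \ref{Gromov-lemma-prop} is stated with $c_2$ independent of $\Gamma$). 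Finally, I should handle the degenerate cases $\wt\lambda_{k_i}(f_i) = +\infty$ (impossible for $f_i \neq 0$ by Proposition \ref{height-upper-bound-prop}, which gives $\lambda_k \leq Ck < \infty$, hence $\wt\lambda_k < \infty$ on nonzero sections) and the case where some $f_i = 0$ (then both sides are $+\infty$ or the statement is vacuous), so the inequality is meaningful and finite throughout.
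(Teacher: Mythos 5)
Your plan to reduce to $n=2$ and iterate is where the argument breaks down. Applying the two-factor bound $\wt{\lambda}_{k+\ell}(fg) \geq \wt{\lambda}_k(f) + \wt{\lambda}_\ell(g) - \psi(k) - \psi(\ell)$ to partial products $g_i := f_1\cdots f_i$ of degree $K_i := k_1 + \cdots + k_i$ gives, after $n-1$ multiplications, a total loss of $\sum_{i=1}^n \psi(k_i) + \sum_{j=2}^{n-1}\psi(K_j)$. The extra term $\sum_{j=2}^{n-1}\psi(K_j)$ comes from the fact that every two-factor estimate necessarily charges $\psi$ at \emph{both} arguments, including at the degree of the accumulated product; this excess grows with $n$ and cannot be absorbed into $\sum_i \psi(k_i)$. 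So the sentence about ``incurring exactly the cost $\psi(k_i)$ for the new factor'' is not something the two-factor estimate can deliver — you would need an asymmetric estimate that only pays at the new factor, which the Gromov comparison does not provide (you must convert both the accumulated product and the new factor from sup-norm to $L^2$-norm).

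What actually works — and is what the paper does — is to treat all $n$ factors in a single pass: compare $\|f_1\cdots f_n\|_{K,\infty}$ to the sup-norm \emph{once} (costing a single $c_1^{-1}$), use pointwise submultiplicativity of the Petersson sup-norm across all $n$ factors, and then compare each factor's sup-norm back to its $L^2$-norm via Proposition \ref{Gromov-lemma-prop} (costing $c_2 k_i^{3/4}$ per factor). The archimedean defect is then $c_1^{-1} c_2^n \prod_i k_i^{3/4}$, which must be dominated by $\prod_i e^{\psi(k_i)} = c_1^{-n/2} c_2^n \prod_i k_i^{3/4}$; this holds for $n \geq 2$ \emph{only because} $c_1 \leq 1$ (indeed $c_1 = (\tfrac{1}{d_\Gamma}\int_{X(\Gamma)_\C} d\mu_\Gamma)^{-1/2} = (\pi/3)^{-1/2} < 1$). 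Your proposal overlooks this role of $c_1 < 1$, which is precisely why $\psi$ is defined with the $-\tfrac12\log c_1$ term. The remaining ingredients you identify — non-archimedean submultiplicativity via $\mathscr{S}_k \cdot \mathscr{S}_\ell \subseteq \mathscr{S}_{k+\ell}$, the $\varepsilon$-trick to pass from the height $\lambda_k$ of elements to the index function $\wt{\lambda}_k$, and the treatment of degenerate cases — are correct and match the paper's proof, so if you replace the induction by this direct $n$-fold sup-norm argument the proof goes through.
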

\begin{proof}
Note that $c_1$ was set to be $(\int_{X(\Gamma)_\C} d\mu_\Gamma)^{-1}$, which only depends on the index of $\Gamma$ in $\Gamma(1)$. Since $[\Gamma(1) : \Gamma] = [\Gamma(1) : \Gamma_\sigma]$ for all $\sigma$, we may use the same $c_1$ for all the $\Gamma_\sigma$. Hence for any $f \in S_k$, proposition \ref{Gromov-lemma-prop} gives
\[
c_1^2 \|f\|_{k, \infty}^2 \leq \sum_\sigma \|f^\sigma \Delta^k \|_{\sup}^2 \leq c_2^2 k^{3/2} \|f\|_{k, \infty}^2. 
\]

Then for $f_i \in S_{k_i}$ ($i = 1, \dots, n$), letting $K := k_1 + \dots, k_n$, we get
\begin{align*}
c_1^2 \| f_1 \dots, f_n \|_{K, \infty}^2 &\leq \sum_\sigma \| (f_1 \dots f_n)^\sigma \Delta^K \|_{\sup}^2 \\
&\leq \sum_\sigma \|f_1^\sigma \Delta^{k_1} \|_{\sup}^2 \cdots \|f_n^\sigma \Delta^{k_n} \|_{\sup}^2 \\
&\leq \left( \sum_\sigma \|f_1^\sigma \Delta^{k_1} \|_{\sup}^2 \right) \cdots \left( \sum_\sigma \|f_n^\sigma \Delta^{k_n} \|_{\sup}^2 \right) \\
&\leq c_2^{2n} k_1^{3/2} \cdots k_n^{3/2} \|f_1\|_{k_1, \infty}^2 \dots \|f_n\|_{k_n, \infty}^2,
\end{align*}
and consequently,
\[
\| f_1 \dots f_n \|_{K, \infty} \leq e^{\psi(k_1) + \dots + \psi(k_n)} \|f_1\|_{k_1, \infty} \cdots \|f_n\|_{k_n, \infty}. 
\]
(We use the fact that $c_1 < 1$ here.) 

Pick any $\varepsilon > 0$. By definition, $f_i \in S_{k_i}^{\wt{\lambda}_{k_i}(f_i) - \varepsilon/n}$, and hence $f_i = \sum g_{i,j}$ for $g_{i,j} \in S_{k_i}$ with $\lambda_{k_i}(g_{i,j}) \geq \wt{\lambda}_{k_i}(f_i) - \varepsilon/n$. It is easy to see that for any finite place $p$, 
\[
\|g_{1, j_1} \cdots g_{n, j_n}\|_{K,p} \leq \|g_{1,j_1}\|_{k_1, p} \cdots \|g_{n,j_n}\|_{k_n,p}.
\]
This combined with the previous paragraph yields
\begin{align*}
\lambda_K(g_{1, j_1} \cdots g_{n, j_n}) &\geq \sum_{i=1}^n \left( \lambda_{k_i}(g_{i,j_i}) - \psi(k_i) \right) \\
&\geq \sum_{i=1}^n \left( \wt{\lambda}_{k_i}(f_i) - \psi(k_i) \right) - \varepsilon.
\end{align*}
Since $\varepsilon$ is arbitrary, we conclude that 
\[
\wt{\lambda}_K(f_1 \cdots f_n ) \geq \sum_{i=1}^n \left( \wt{\lambda}_{k_i}(f_i) - \psi(k_i) \right),
\]
as required. 
\end{proof}

Given a Borel measure $\nu$, and an integrable function $f$ on $\R$, let
\[
\nu(f) := \int_\R f \ d\mu. 
\]

\begin{Prop}
\label{nu-L-conv-prop}
With the notation above, the sequence of probability measures 
\[
\nu_{L,k} := \frac{1}{\dim_\Q B_{L,k}} \sum_{i=1}^{\dim_\Q B_{L,k}} \delta_{\frac{1}{k} \lambda_{L,k,i}}
\]
converges weakly as $k \to \infty$ to a probability measure $\nu_{L, \infty}$ with compact support. 
\end{Prop}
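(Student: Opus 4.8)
The plan is to apply Chen's theorem (theorem 3.4.3 of \cite{Chen-1}) to a single finitely generated graded $\Q$-algebra that ``sees'' all the $B_{L,k}$ at once. Fix $L \geq 1$. First I would form the graded $\Q$-algebra
\[
B_L := \bigoplus_{k \geq 0} B_{L,k} = \bigoplus_{k \geq 0} H^0\!\left(X(\Gamma), \mathscr{L}_\Q^{\otimes k}(-\lceil k/L\rceil D)\right),
\]
with multiplication induced from that of $\bigoplus_k H^0(X(\Gamma), \mathscr{L}_\Q^{\otimes k})$; one checks this is well-defined since $\lceil k_1/L\rceil + \lceil k_2/L\rceil \geq \lceil (k_1+k_2)/L\rceil$, so the product of a section vanishing to order $\lceil k_1/L\rceil$ and one vanishing to order $\lceil k_2/L\rceil$ along $D$ vanishes to order at least $\lceil (k_1+k_2)/L\rceil$. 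Since $X(\Gamma)$ is a smooth projective curve and $\mathscr{L}_\Q$ has positive degree (being the pullback of $\mathcal{O}(\infty)$ under a finite map), $B_L$ is a finitely generated integral $\Q$-algebra (it is, up to finitely many graded pieces, the section ring of an ample line bundle twisted down by a fixed fractional amount of $D$; more carefully, pass to the Veronese subalgebra $\bigoplus_m B_{L, mL}$, which is the section ring of the ample bundle $\mathscr{L}_\Q^{\otimes L}(-D)$ on $X(\Gamma)$, hence finitely generated, and deduce finite generation of $B_L$ over it), and $B_{L,k} \neq 0$ for all $k$ sufficiently large.

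Next I would equip each $B_{L,k}$ with the decreasing $\R$-filtration $B_{L,k}^a := S_k^a \cap B_{L,k}$ already defined in the excerpt, whose index function on $B_{L,k}$ agrees with $\wt{\lambda}_k$ restricted to $B_{L,k}$. The measure $\nu_{L,k}$ in the statement is exactly $T_{1/k}\nu_{\mathscr{F}^{(k)}}$ in Chen's notation for this filtration. To invoke theorem 3.4.3 of \cite{Chen-1} I must verify its two hypotheses. For the quasi-filtered condition: given $f_i \in B_{L,k_i}$ for $i = 1,\dots,n$ (so in particular $f_i \in S_{k_i}$), lemma \ref{quasi-filtered-lemma} gives
\[
\wt{\lambda}_{k_1+\cdots+k_n}(f_1\cdots f_n) \;\geq\; \sum_{i=1}^n \left(\wt{\lambda}_{k_i}(f_i) - \psi(k_i)\right),
\]
and since $f_1\cdots f_n \in B_{L,k_1+\cdots+k_n}$ its index function there equals $\wt{\lambda}_{k_1+\cdots+k_n}(f_1\cdots f_n)$; thus $B_L$ is $\psi$-quasi-filtered. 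Because $\psi(k) = \frac{3}{4}\log k + O(1)$, we have $\psi(k)/k \to 0$, so the function $f = \psi$ (extended by, say, $\psi(0)=\psi(1)$) meets Chen's hypothesis. For the second hypothesis, proposition \ref{height-upper-bound-prop} gives a constant $C$ with $\lambda_k(f) \leq Ck$ for all nonzero $f \in S_k$, hence $\wt{\lambda}_k(f) \leq Ck$ for all nonzero $f \in S_k$, and a fortiori $\lambda_{\max}(B_{L,k}, \mathscr{F}^{(k)}) \leq Ck$ with $C$ independent of $k$ (and even of $L$). Chen's theorem 3.4.3 then yields that the supports of the $\nu_{L,k}$ are uniformly bounded in $k$ and that $\nu_{L,k}$ converges weakly as $k \to \infty$ to a compactly supported probability measure $\nu_{L,\infty}$.

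\textbf{Main obstacle.} The delicate point is the finite generation and integrality of $B_L$ together with the nonvanishing $B_{L,k} \neq 0$ for $k$ large --- i.e. checking hypothesis (i) of Chen's theorem in the precise form he requires. The quasi-filtered estimate is handed to us by lemma \ref{quasi-filtered-lemma} and the height bound by proposition \ref{height-upper-bound-prop}, so those are essentially bookkeeping; but one must argue carefully that twisting an ample section ring down by the non-integral divisor $\lceil k/L\rceil D$ still produces a finitely generated algebra. The cleanest route is via the Veronese subalgebra $B_L^{(L)} := \bigoplus_{m\geq 0} B_{L,mL}$: here $\lceil mL/L\rceil = m$, so $B_L^{(L)} = \bigoplus_m H^0(X(\Gamma), (\mathscr{L}_\Q^{\otimes L}(-D))^{\otimes m})$ is the section ring of the line bundle $\mathscr{L}_\Q^{\otimes L}(-D)$, which has positive degree for $L$ large (and for all $L \geq 1$ if $\deg \mathscr{L}_\Q > \deg D$, which may require replacing $L$ by a bounded multiple --- harmless since $L \to \infty$ is all that matters in the later application) hence is ample on the curve $X(\Gamma)$, so this section ring is a finitely generated integral domain by classical results. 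Then $B_L$ is a finitely generated module over $B_L^{(L)}$ because for $0 \leq r < L$ the graded piece $B_{L,mL+r}$ sits between $H^0(\mathscr{L}_\Q^{\otimes(mL+r)}(-(m+1)D))$ and $H^0(\mathscr{L}_\Q^{\otimes(mL+r)}(-mD))$ and is generated over $B_L^{(L)}$ by finitely many elements in bounded degree (a standard argument for graded modules over the section ring of an ample bundle on a projective variety). Finite generation of $B_L$ over $\Q$ follows, integrality is inherited from $\bigoplus_k H^0(X(\Gamma), \mathscr{L}_\Q^{\otimes k})$ which is a domain since $X(\Gamma)$ is integral, and $B_{L,k} \neq 0$ for $k$ large by ampleness. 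With (i) and (ii) in hand, theorem 3.4.3 of \cite{Chen-1} finishes the proof.
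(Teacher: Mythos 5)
Your approach is correct but takes a genuinely different route from the paper. The paper applies Chen's theorem 3.4.3 only to the Veronese subalgebra $\bigoplus_{q\geq 0} B_{L,qL}$ (graded by $q$), which is literally the section ring of the ample line bundle $\mathcal{B}_{L,L}=\mathscr{L}_\Q^{\otimes L}(-D)$ and hence obviously finitely generated. This gives weak convergence of $\nu_{L,qL}$ (after rescaling Chen's measures by $1/L$), and the paper then handles a general $k=qL+r$ by a separate interpolation argument: from lemma \ref{quasi-filtered-lemma} it deduces $\lambda_{L,(q+1)L,i}\geq \lambda_{L,k,i}+c_3\log k$, observes that $\dim_\Q B_{L,(q+1)L}-\dim_\Q B_{L,k}$ is independent of $q$, and uses a $\limsup/\liminf$ sandwich with bounded increasing test functions to conclude $\nu_{L,k}\to\nu_{L,\infty}$. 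You instead apply Chen directly to the full algebra $B_L=\bigoplus_{k\geq 0}B_{L,k}$, which eliminates both the $1/L$ rescaling and the interpolation step, at the cost of having to justify that $B_L$ is of finite type. Your justification is sound: multiplication is well-defined because $\lceil k_1/L\rceil+\lceil k_2/L\rceil\geq\lceil(k_1+k_2)/L\rceil$, and for each residue $1\leq r<L$ the graded piece $B_{L,mL+r}=H^0\bigl(N^{\otimes m}\otimes\mathscr{L}_\Q^{\otimes r}(-D)\bigr)$ with $N=\mathscr{L}_\Q^{\otimes L}(-D)$ is a finitely generated module over the section ring $B_L^{(L)}=\bigoplus_m H^0(N^{\otimes m})$, so $B_L$ is a finite $B_L^{(L)}$-module and hence a finitely generated $\Q$-algebra. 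The quasi-filtered and height-bound hypotheses are verified identically in both approaches. Overall your variant is arguably cleaner in that it invokes Chen's theorem once and stops, whereas the paper's is more elementary in its use of section rings but requires extra bookkeeping to pass from the Veronese to all $k$. One small caution you already flagged: both arguments implicitly need $\deg(\mathscr{L}_\Q^{\otimes L}(-D))>0$, which is automatic for $L\geq 2$ since $\deg D\leq\deg\mathscr{L}_\Q$; since only large $L$ is used downstream, this is harmless.
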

\begin{proof}
First, we consider the case $k = qL$ for integers $q \geq 1$. Let
\[
B_L := \bigoplus_{q \geq 0} B_{L,qL}.
\]
Since $\deg(\mc{B}_{L,L}) > 0$, $\mc{B}_{L,L}$ is ample, and hence $B_L$ is a finitely generated $\Q$-algebra with $B_{L,qL} \neq 0$ for $q$ large. We endow each $B_{L,qL}$ with the filtration $(B_{L,qL}^a)_{a \in \R}$ discussed above. Then by lemma \ref{quasi-filtered-lemma}, $B_L$ is $\psi_L$-quasi-filtered with $\psi_L(q) := \psi(qL)$, where $\psi$ is as in lemma \ref{quasi-filtered-lemma}. By proposition \ref{height-upper-bound-prop}, $\wt{\lambda}_{L, qL}(f) = \wt{\lambda}_{qL}(f) \leq CqL$ for all non-zero $f \in B_{L,qL}$. Hence by theorem 3.4.3 of \cite{Chen-1} (and by rescaling the measures by $1/L$), the collection of measures 
\[
\nu_{L,qL} := \frac{1}{\dim_\Q B_{L,qL}} \sum_{i=1}^{\dim_\Q B_{L,qL}} \delta_{\frac{1}{qL} \lambda_{L,qL,i}}
\]
converges weakly to a compactly supported probability measure $\nu_{L,\infty}$ on $\R$.

For general $k$, suppose that $k = qL + r$ with $0 \leq r < L$. For $f \in B_{L,k} \subseteq B_{L,(q+1)L}$, lemma \ref{quasi-filtered-lemma} gives 
\begin{align*}
\wt{\lambda}_{L,(q+1)L}(f) &\geq \wt{\lambda}_{L,k}(f) + \wt{\lambda}_{L,L-r}(1) -\psi(k) - \psi(L-r) \\
&\geq \wt{\lambda}_{L,k}(f) + c_3 \log(k),
\end{align*}
for some constant $c_3$ independent of $k$. This implies that for all $i = 1, \dots, \dim_\Q B_{L,k}$, 
\[
\lambda_{L,(q+1)L, i} \geq \lambda_{L,k,i} + c_3\log(k). 
\]
Then 
\[
\dim_\Q B_{L,(q+1)L} - \dim_\Q B_{L,k} = [E : \Q] d_\Gamma (L-r),
\]
is independent of $q$. Hence, for any bounded increasing continuous function $f : \R \to \R$, we have 
\[
\nu_{L,k}(f) \leq \nu_{L, (q+1)L}(f) + o(1),
\]
and hence,
\[
\limsup_{q \to \infty} \nu_{L, qL + r}(f) \leq \nu_{L, \infty}(f). 
\]
Similarly, using the inclusion $B_{L,qL} \subseteq B_{L,k}$, we deduce that 
\[
\liminf_{q \to \infty} \nu_{L, qL + r}(f) \geq \nu_{L,\infty}(f),
\]
and hence $\nu_{L,k}(f) \to \nu_{L,\infty}(f)$ for all bounded increasing continuous functions $f$. Since all measures involved are probability measures, this shows that $\nu_{L,k} \to \nu_{L, \infty}$ weakly. 
\end{proof}

\begin{Lemma}
\label{main-estimate-lemma}
For every positive real number $\varepsilon$ and bounded Lipschitz function $h : \R \to \R$ , there is a constant $L_0 = L_0(\varepsilon, h)$ such that for all $L \geq L_0$, and for all $k \geq 1$, we have
\[
|\nu_{k}(h) - \nu_{L,k}(h)| \leq \epsilon. 
\]
\end{Lemma}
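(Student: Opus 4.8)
The plan is to compare $\nu_k$ and $\nu_{L,k}$ directly via the \emph{subspace} filtration. Since $\{\lambda_{L,k,i}\}_{i=1}^{\dim_\Q B_{L,k}}$ is a sub-multiset of $\{\lambda_{k,i}\}_{i=1}^{d_k}$, write $N_{L,k} := d_k - \dim_\Q B_{L,k}$ for the number of ``extra'' maxima and $R_{L,k}$ for the corresponding size-$N_{L,k}$ multiset. Then one has the exact identity
\[
\nu_k = \frac{\dim_\Q B_{L,k}}{d_k}\,\nu_{L,k} + \frac{1}{d_k}\sum_{\mu \in R_{L,k}} \delta_{\mu/k},
\]
so that, as long as $B_{L,k}\neq 0$ (so $\nu_{L,k}$ is a genuine probability measure), for any bounded $h$
\[
|\nu_k(h) - \nu_{L,k}(h)| \;\le\; \frac{N_{L,k}}{d_k}\,|\nu_{L,k}(h)| + \frac{1}{d_k}\sum_{\mu\in R_{L,k}}|h(\mu/k)| \;\le\; \frac{2 N_{L,k}}{d_k}\,\|h\|_\infty .
\]
Thus the lemma reduces to bounding $N_{L,k}/d_k$ uniformly in $k$ by a quantity tending to $0$ as $L\to\infty$ (only boundedness of $h$ is used; the Lipschitz hypothesis is not needed here).

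For the numerator I would use the short exact sequence of sheaves on $X(\Gamma)$
\[
0 \to \mathscr{L}_\Q^{\otimes k}\!\big(-\lceil k/L\rceil D\big) \to \mathscr{L}_\Q^{\otimes k}(-D) \to \mc{Q} \to 0,
\]
where $\mc{Q}$ is a torsion sheaf supported on a thickening of $D$ with $\dim_E H^0(\mc{Q}) = (\lceil k/L\rceil - 1)\deg_E D$; taking global sections gives $N_{L,k} \le [E:\Q]\,(\lceil k/L\rceil - 1)\,\deg_E D \le [E:\Q]\,\deg_E(D)\cdot k/L$, valid for all $k,L\ge 1$. For the denominator, $\deg\big(\mathscr{L}_\Q^{\otimes k}(-D)\big) = k\,d_\Gamma - \deg_E D$ grows linearly with positive slope $d_\Gamma = \deg(\pi_{\Gamma,E})$, so Riemann--Roch supplies a constant $c_0 > 0$ and a threshold $k_1$ with $d_k \ge c_0 k$ for all $k \ge k_1$.

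Now split on the size of $k$. If $1 \le k \le L$ then $\lceil k/L\rceil = 1$, so $\mc{B}_{L,k} = \mathscr{L}_\Q^{\otimes k}(-D)$, hence $B_{L,k} = S_k$ with the same filtration and $\nu_{L,k} = \nu_k$, and the estimate is trivial. If $k > L$ and we require $L_0 \ge k_1$, then $k > k_1$, so $d_k \ge c_0 k$ and
\[
|\nu_k(h) - \nu_{L,k}(h)| \le \frac{2N_{L,k}}{d_k}\|h\|_\infty \le \frac{2\,[E:\Q]\,\deg_E(D)\,\|h\|_\infty}{c_0\,L},
\]
which is $\le \varepsilon$ once $L \ge L_0 := \max\!\big(k_1,\; \lceil 2[E:\Q]\deg_E(D)\|h\|_\infty / (c_0\varepsilon)\rceil\big)$, a bound depending only on $\varepsilon$ and $h$, as required.

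I do not expect a real obstacle: the content is entirely bookkeeping, made clean precisely because $B_{L,k}$ carries the subspace filtration. The only point needing care is the uniformity in $k$ of the two dimension estimates: the lower bound $d_k \gtrsim k$ holds only past the Riemann--Roch threshold, and it is exactly the trivial range $k \le L$ (together with the choice $L_0 \ge k_1$) that lets one sidestep small $k$; one should also enlarge $L_0$ if necessary so that $\deg\big(\mathscr{L}_\Q^{\otimes k}(-\lceil k/L\rceil D)\big)$ is large enough to force $B_{L,k}\neq 0$ for all $k$ in the relevant range, keeping $\nu_{L,k}$ a probability measure.
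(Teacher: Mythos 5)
Your proof is correct and takes essentially the same approach as the paper: both exploit the fact that $\{\lambda_{L,k,i}\}$ is a sub-multiset of $\{\lambda_{k,i}\}$ to bound $|\nu_k(h)-\nu_{L,k}(h)|$ by roughly $2(d_k - d_{L,k})\|h\|_\infty/d_k$, then control $d_k - d_{L,k}$ via Riemann--Roch and use linear growth of $d_k$. You correctly observe that only boundedness of $h$ (not the Lipschitz constant) is needed here, and you handle the small-$k$ and $B_{L,k}=0$ edge cases a bit more explicitly than the paper, but the decomposition and estimates are the same.
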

\begin{proof}
Let $|h|_{\text{Lip}} := \sup_{x \in \R} |h(x)| + \sup_{x,y \in \R, x \neq y} \frac{|h(x) - h(y)|}{|x-y|} = M$. Recall that since $B_{L,k}$ is equipped with the subspace filtration coming from $S_k$, the multi-set $\{ \lambda_{L,k,i} \}_{i=1}^{\dim_\Q B_{L,k}}$ is a sub multi-set of $\{ \lambda_{k,i} \}_{i=1}^{\dim_\Q S_k}$. Let $d_{L,k} := \dim_\Q B_{L,k}$ and $d_k := \dim_\Q S_k$. Then
\begin{align*}
|\nu_k(h) - \nu_{L,k}(h)| &= \left| \frac{1}{d_k}\sum_{i=1}^{d_k} h\left( \frac{1}{k} \lambda_{k,i} \right) - \frac{1}{d_{L,k}} \sum_{i=1}^{d_{L,k}} h\left( \frac{1}{k} \lambda_{L,k,i} \right)  \right| \\
&\leq \frac{1}{d_k}\left|\sum_{i=1}^{d_k} h\left( \frac{1}{k} \lambda_{i,k} \right) - \sum_{i=1}^{d_{L,k}} h\left( \frac{1}{k} \lambda_{i,L,k} \right) \right| + \left(\frac{1}{d_{L,k}} - \frac{1}{d_k} \right) \left| \sum_{i=1}^{d_{L,k}} h\left( \frac{1}{k} \lambda_{i,L,k} \right) \right| \\
&\leq \frac{1}{d_k} (d_k - d_{L,k})M + \frac{d_k - d_{L,k}}{d_k d_{L,k}} d_{L,k} M \\
&\leq \frac{c_4 \left( \lceil k/L \rceil - 1\right)}{d_k} \\
&\leq \frac{c_4}{L},
\end{align*}
for some constant $c_4$. The conclusion follows at once. 
\end{proof}

\begin{Cor}
Suppose $N \geq 2$. Let $h : \R \to \R$ be a bounded Lipschitz function. Then the sequences $(\nu_k(h))_k$ and $(\nu_{L,\infty}(h))_L$ are convergent and have the same limit. 
\end{Cor}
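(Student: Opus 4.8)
The plan is a standard interchange-of-limits ($\varepsilon/3$-type) argument: Lemma \ref{main-estimate-lemma} controls $|\nu_k(h) - \nu_{L,k}(h)|$ \emph{uniformly in $k$}, and Proposition \ref{nu-L-conv-prop} lets us pass to the limit in $k$ for each fixed $L$. A bounded Lipschitz $h$ is in particular bounded and continuous, so weak convergence $\nu_{L,k} \to \nu_{L,\infty}$ gives $\nu_{L,k}(h) \to \nu_{L,\infty}(h)$.

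\emph{Step 1: $(\nu_{L,\infty}(h))_L$ is Cauchy, hence convergent.} Fix $\varepsilon > 0$ and let $L_0 = L_0(\varepsilon, h)$ be as in Lemma \ref{main-estimate-lemma}. For $L, L' \geq L_0$ and any $k \geq 1$,
\[
|\nu_{L,\infty}(h) - \nu_{L',\infty}(h)| \leq |\nu_{L,\infty}(h) - \nu_{L,k}(h)| + |\nu_{L,k}(h) - \nu_k(h)| + |\nu_k(h) - \nu_{L',k}(h)| + |\nu_{L',k}(h) - \nu_{L',\infty}(h)|.
\]
The second and third terms are each $\leq \varepsilon$ by Lemma \ref{main-estimate-lemma}; the first and fourth tend to $0$ as $k \to \infty$ by Proposition \ref{nu-L-conv-prop}. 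Letting $k \to \infty$ yields $|\nu_{L,\infty}(h) - \nu_{L',\infty}(h)| \leq 2\varepsilon$ for all $L, L' \geq L_0$, so $(\nu_{L,\infty}(h))_L$ is a Cauchy sequence in $\R$ and converges to some $\ell \in \R$.

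\emph{Step 2: $\nu_k(h) \to \ell$.} Given $\varepsilon > 0$, choose $L \geq L_0(\varepsilon, h)$ large enough that also $|\nu_{L,\infty}(h) - \ell| \leq \varepsilon$ (possible by Step 1). Then for every $k \geq 1$,
\[
|\nu_k(h) - \ell| \leq |\nu_k(h) - \nu_{L,k}(h)| + |\nu_{L,k}(h) - \nu_{L,\infty}(h)| + |\nu_{L,\infty}(h) - \ell| \leq \varepsilon + |\nu_{L,k}(h) - \nu_{L,\infty}(h)| + \varepsilon.
\]
Letting $k \to \infty$ and using Proposition \ref{nu-L-conv-prop} once more gives $\limsup_{k \to \infty} |\nu_k(h) - \ell| \leq 2\varepsilon$; as $\varepsilon > 0$ is arbitrary, $\nu_k(h) \to \ell$. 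Hence both sequences converge, to the common limit $\ell$.

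Every step is an elementary estimate assembled from results already proved, so I do not expect a genuine obstacle. The only point that matters is that the bound in Lemma \ref{main-estimate-lemma} is uniform in $k$ — that uniformity is precisely what legitimizes interchanging the $k \to \infty$ and $L \to \infty$ limits, and without it the argument would break down.
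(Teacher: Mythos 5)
Your argument is correct and is essentially the same interchange-of-limits argument as the paper's: both hinge on the uniformity in $k$ of Lemma \ref{main-estimate-lemma} combined with the weak convergence $\nu_{L,k}\to\nu_{L,\infty}$ from Proposition \ref{nu-L-conv-prop}. The only difference is the order in which the two halves are established — you first show $(\nu_{L,\infty}(h))_L$ is Cauchy and then pull $\nu_k(h)$ to the same limit, whereas the paper first sandwiches $\liminf_k \nu_k(h)$ and $\limsup_k \nu_k(h)$ to get existence of $\lim_k\nu_k(h)$ and then identifies it with $\lim_L \nu_{L,\infty}(h)$ — but this is a cosmetic reordering, not a different route.
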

\begin{proof}
Let $\varepsilon > 0$ be any positive real number, and let $L_0 = L_0(\varepsilon, h)$ be as in the previous lemma. For any $L \geq L_0$, and any $k$, we have $|\nu_k(h) - \nu_{L,k}(h)| \leq \varepsilon$ from the lemma above, and hence 
\[
\nu_{L,k}(h) - \varepsilon \leq \nu_k(h) \leq \nu_{L,k}(h) + \varepsilon. 
\]
From this we get 
\[
\nu_{L,\infty}(h) - \varepsilon \leq \liminf_k \nu_k(h) \leq \limsup_k \nu_k(h) \leq \nu_{L,\infty}(h) + \varepsilon,
\]
and hence 
\[
0 \leq \limsup_k \nu_k(h) - \liminf_k \nu_k(h) \leq 2\varepsilon.
\]
Since $\varepsilon$ is arbitrary, we conclude that $\lim_k \nu_k(h)$ exists. Moreover, since for all $L \geq L_0$, we have 
\[
|\nu_{L,\infty}(h) - \lim_k \nu_k(h)| \leq \varepsilon,
\]
we conclude that $\lim_L \nu_{L,\infty}(h) = \lim_k \nu_k(h)$, as required.
\end{proof}

Now by the Riesz representation theorem for measures, there exists a sub-probability Borel measure $\nu$ on $\R$ representing the positive linear functional $\lim_k \nu_k = \lim_L \nu_{L,\infty}$ on $C_c(\R)$. Namely, for all $h \in C_c(\R)$, we have 
\begin{equation}
\label{vague-conv-limit-eqn}
\nu(h) = \lim_k \nu_k(h) = \lim_L \nu_{L,\infty}(h). 
\end{equation}
Hence, $\nu_k$ converges \textit{vaguely} to $\nu$. 

\subsection{$\nu$ is a probability measure} 
\label{prob-measure-proof-ss}
We obtain lower bound estimates on the successive maxima $\lambda_{k,i}$ of $S_k$ in proposition \ref{lower-bounds-successive-max}, which are then used in proposition \ref{uniform-tightness-prop} to show that $\nu$ is a probability measure. 

Recall that $T_1, \dots, T_r$ are the pre-images of $\infty \in \P_\Q^1$ under the map $\pi_\Gamma : X(\Gamma) \to \P_\Q^1$, and $D = T_1 + \dots + T_r$. Let $e_i$ denote the ramification index of $T_i$ over $\infty$. Then
\[
\mathscr{L}_\Q^{\otimes k}(-D) = \mc{O}_{X(\Gamma)} \left( (ke_1 - 1)T_1 + \dots + (ke_r - 1)T_r \right). 
\]

Let $g$ denote the genus of $X(\Gamma)_\C$. Then the genus of $X(\Gamma)$ as a curve over $E$, and that of $X(\Gamma_\sigma)_\C$, for each $\sigma$, is also equal to $g$.  

\begin{Prop}
\label{lower-bounds-successive-max}
Fix an integer $k_0$ such that $k_0e_p - 1 > 2 g - 2$ for all $p = 1, \dots, r$, and let $d_0 := \dim_\Q S_{k_0}$. For all $k > k_0$, if $\{ \lambda_{k,i} \}_{i=1}^{\dim_\Q S_k}$ denote the successive maxima of $S_k$, then:
\begin{itemize}
\item For $1 \leq i \leq d_0$, 
\[
\lambda_{k,i} \geq 6k \log \left( \frac{k - k_0}{k} \right) - C_4 k.
\]
\item For $d_0 + 1 \leq i$, 
\[
\lambda_{i,k} \geq 6k \log \left( \frac{k - k_0 - \left\lceil \frac{i - d_0}{d_\Gamma [E : \Q]} \right\rceil}{k} \right) - C_4 k.
\]
\end{itemize}
For $i > d_0 + (k - k_0 - 1)d_\Gamma [E : \Q]$, we interpret the right hand side above as $-\infty$. 
\end{Prop}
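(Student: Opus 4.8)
The plan is to produce, for each relevant index $i$, an explicit subspace of $S_k$ of dimension at least $i$ on which the naive adelic height $\lambda_k$ is bounded below by the stated quantity; by the definition of successive maxima this immediately gives $\lambda_{k,i}$ the desired lower bound. The mechanism, exactly as in \cite{TGS} Theorem 3.4.1 and Lemma 3.4.4, is multiplication by a fixed high power of the cusp form $\Delta$ pulled back to $X(\Gamma)$: if $f \in S_{k_0}$ is a rational cusp form of weight $12k_0$, then $(\Delta')^{\,k-k_0} f \in S_k$, where $\Delta'$ denotes the pullback to $X(\Gamma)$ (via $\pi_{\Gamma}$) of the weight-$12$ form $\Delta$ on $X(1)$ — concretely, $\Delta'$ is the canonical section of $\mathscr{L}_\Q(-D)$ corresponding under Proposition \ref{expl-desc-mod-forms-prop} to the constant section $1$ of $\pi_{\Gamma,\C}^*\mc{O}_{\P^1_\C}(\infty)$. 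The point is that on each archimedean component $X(\Gamma_\sigma)_\C$ one has the clean pointwise identity $|(\Delta')^{m}|_{\mathrm{Pet}}(z) = |\Delta(z)|\,(4\pi\,\mathrm{Im}(z))^{6}$ raised to the $m$-th power in the relevant normalization, so the sup (hence the $L^2$) norm of $(\Delta')^{k-k_0}f$ is controlled by that of $f$ together with an explicit factor coming from $\sup_{z}|\Delta(z)|(4\pi\,\mathrm{Im}(z))^6$ on a fundamental domain; the $6k\log\frac{k-k_0}{k}$ term is precisely the logarithm of the ratio of the ``weights'' $k_0$ and $k$ appearing in the Petersson normalization $(4\pi\,\mathrm{Im}(z))^{12k}$, mirroring the computation on p.\ of \cite{TGS}.

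Concretely I would proceed as follows. \textbf{Step 1.} Fix $k_0$ as in the statement, so that $\deg(\mathscr{L}_\Q^{\otimes k_0}(-D)) = \sum_p (k_0 e_p - 1) > r(2g-2) \geq 2g-2$ on each genus-$g$ curve $X(\Gamma_\sigma)$ (here I use that $k_0 e_p - 1 > 2g-2$ for each $p$, which forces the total degree well above $2g-2$); then Riemann–Roch gives $\dim_\Q S_{k_0} = d_0$ and, more importantly, for every $k > k_0$ the line bundle $\mathscr{L}_\Q^{\otimes k}(-D - (k-k_0)D') $ where $D'$ is a suitable effective divisor supported away from the cusps stays non-special, so the dimensions $\dim_\Q S_k$ and the ``gaps'' between consecutive twists behave as claimed — in particular $\dim_\Q S_k - \dim_\Q S_{k_0} = (k - k_0)\,d_\Gamma[E:\Q]$ once $k, k_0$ are both large, which is what makes the index ranges $1\le i\le d_0$ and $d_0+1\le i$ split the way they do. \textbf{Step 2.} For $1 \le i \le d_0$, take a $\Q$-basis $f_1,\dots,f_{d_0}$ of $S_{k_0}$ and consider the subspace $W_i := \mathrm{span}_\Q\{(\Delta')^{k-k_0}f_1,\dots,(\Delta')^{k-k_0}f_i\} \subseteq S_k$, which has dimension $i$ since multiplication by the nonzero section $(\Delta')^{k-k_0}$ is injective. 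For any $f$ in this span, use Lemma \ref{basic-scaling-lemma} to scale so $\lambda_k(f) = -\log\|f\|_{k,\infty}$, and estimate $\|(\Delta')^{k-k_0}f\|_{k,\infty}$ from above: writing out the Petersson $L^2$ norm and pulling out $\sup_{z\in\mc{F}_{\Gamma_\sigma}}\big(|\Delta(z)|(4\pi\,\mathrm{Im}(z))^6\big)^{k-k_0}$, one gets a bound of the shape $e^{-6(k-k_0)\log(\text{const})}\cdot(\text{something involving }\|f\|_{k_0})$, and after dividing through by the $(4\pi\,\mathrm{Im}(z))$-normalizations attached to weights $k$ versus $k_0$ the factor $(k_0/k)^{6k}$ — equivalently $6k\log(k_0/k)$ in log scale — emerges. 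Absorbing all $k$-independent and mild ($o(k)$, e.g. $\log k$) contributions into a single constant $C_4$ (rescaling it as needed, and using Proposition \ref{Gromov-lemma-prop} to pass between sup and $L^2$ norms with only a $k^{3/4}$ loss, i.e. $O(\log k) = o(k)$) gives the first bullet. \textbf{Step 3.} For $i > d_0$, the same construction is applied with $k_0$ replaced by $k_0 + m$ where $m = \big\lceil \frac{i-d_0}{d_\Gamma[E:\Q]}\big\rceil$: by the dimension count in Step 1, $\dim_\Q S_{k_0+m} \ge d_0 + m\, d_\Gamma[E:\Q] \ge i$, so multiplying a basis of (the first $i$ elements of) $S_{k_0+m}$ by $(\Delta')^{k-k_0-m}$ yields an $i$-dimensional subspace of $S_k$; the identical norm estimate now produces $6k\log\frac{k-k_0-m}{k} - C_4 k$. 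When $m \ge k - k_0$ the source space twist is untwistable into $S_k$ and the bound degenerates to $-\infty$, which matches the convention stated for $i > d_0 + (k-k_0-1)d_\Gamma[E:\Q]$.

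The main obstacle is \textbf{Step 1}, the bookkeeping of dimensions and specialness across all the archimedean components $X(\Gamma_\sigma)$ simultaneously and over the possibly-non-$\Q$ field $E$: one must check that the generic-fiber cohomology $H^0(X(\Gamma),\mathscr{L}_\Q^{\otimes k}(-D))$ has the expected $E$-dimension (hence $\Q$-dimension $[E:\Q]$ times that), that twisting by $\Delta'$ is compatible with the model over $E$, and that the ``non-special'' range really is $k_0 e_p - 1 > 2g-2$ per cusp rather than just in aggregate — this is where the hypothesis on $k_0$ is genuinely used, to guarantee injectivity of the twist maps and exactness of the relevant Riemann–Roch dimension formula uniformly in $k$. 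The archimedean norm estimate itself (Steps 2–3) is, by contrast, a direct computation with the explicit Petersson metric formula from \S\ref{rational-Pet-met-ss} combined with Proposition \ref{Gromov-lemma-prop}, and should go through essentially verbatim from \cite{TGS} once the sup of $|\Delta(z)|(4\pi\,\mathrm{Im}(z))^6$ over a fundamental domain for $\Gamma_\sigma$ is bounded independently of $\sigma$ (which holds since $\Gamma_\sigma$ has the same index in $\Gamma(1)$ for all $\sigma$, so a fundamental domain is a union of $d_\Gamma$ copies of the standard one).
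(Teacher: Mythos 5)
Your Step 1 and the first bullet (Step 2) match the paper's approach: Riemann--Roch gives the dimension counting, $S_{k_0}$ embeds into $S_k$ via the inclusion of line bundles, and the archimedean estimate for a fixed basis of $S_{k_0}$ yields $\lambda_{k,i}\geq 6k\log\frac{k-k_0}{k}-C_4k$. (A minor remark: the paper estimates the $L^2$ norm directly by explicit integral computations and does not invoke Proposition \ref{Gromov-lemma-prop} here; Gromov's lemma is used elsewhere.)

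The genuine gap is in your Step 3. You propose to multiply ``a basis of $S_{k_0+m}$'' by $(\Delta')^{k-k_0-m}$ and claim that ``the identical norm estimate'' gives $\lambda_{k,i}\geq 6k\log\frac{k-k_0-m}{k}-C_4k$. But the estimate in Step 2 works because the relevant sup norms $\sup_z |f\Delta^{k_0}(z)|(4\pi\mathrm{Im}(z))^{6k_0}$ are bounded by a single constant $C_1$, since $k_0$ is \emph{fixed} and we use a \emph{fixed, finite} basis. In Step 3 the level $k_0+m$ grows with $i$ (up to about $k-1$), so the ``constant'' $C_1$ bounding the weight-$12(k_0+m)$ sup norms of basis elements of $S_{k_0+m}$ now depends on $m$, and for an arbitrary choice of (scaled, integral) basis there is no a priori control: the sup norm of a primitive integral basis element of $\mathscr{S}_{k_0+m}$ can be as large as one likes, and the estimate would simply fail. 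To obtain the required bound one needs the sup norms to grow no faster than roughly $\exp\bigl(6(k_0+m)\log(k_0+m)+O(k_0+m)\bigr)$, which is genuinely a statement about a carefully chosen basis, not about every basis.

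The paper's construction supplies exactly what is missing: starting from a \emph{fixed} set of rational functions $b_{q,p,i}\in S_{k_0+1}$ with prescribed pole orders at the $T_p$ (whose cusp forms $b_{q,p,i}\Delta^{k_0+1}$ have sup norms $\leq C_1$), it forms the basis $\{c_v\}\cup\bigcup_{t=0}^{k-k_0-1}\{j^t b_{q,p,i}\}$ of $S_k$, and bounds the Petersson norm of $j^t b_{q,p,i}\Delta^k$ by factoring it as $(b_{q,p,i}\Delta^{k_0+1})\cdot(j^t\Delta^{k-k_0-1})$ and using the pointwise estimates $|j(z)|\leq C_1 e^{2\pi y}$, $|\Delta(z)|\leq C_1 e^{-2\pi y}$ with the \emph{full} weight factor $(4\pi y)^{12k}$ kept inside the integral. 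This factorization, not the ``sup norm of $S_{k_0+m}$ times a $\Delta$-power'' factorization you propose, is what produces the $(k/(k-k_0-t-1))^{12k}e^{O(k)}$ bound cleanly. In short, your outline correctly identifies the strategy (explicit bases, multiplying by powers of the canonical section of $\mathscr{L}_\Q$, norm estimates), but it omits the one substantive construction the proposition requires --- the $j^t b_{q,p,i}$ basis with uniformly controlled growth --- and without it the claim of an ``identical norm estimate'' does not go through.
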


\begin{proof}
We construct an $E$-basis for $S_k$ when $k \geq k_0$. We start by constructing a basis for $S_{k_0 + 1}/S_{k_0}$. Using proposition 1.40 in \cite{Shimura}, one easily checks that for $k \geq 1$, $\deg(\mathscr{L}_\Q^{\otimes k}(-D)) > 2g - 2$, and hence by Riemann-Roch,
\[
\dim_E S_k = \dim_E H^0(X(\Gamma), \mathscr{L}^{\otimes k}_\Q(-D)) = \deg (\mathscr{L}^{\otimes k}_\Q(-D)) - g + 1.
\]
for $k \geq 1$. Hence, for any $m \geq 0$, 
\begin{equation}
\label{codim-S_k}
\dim_E S_{k_0 + m + 1} - \dim_E S_{k_0 + m} = \deg \mathscr{L}_\Q = d_{\Gamma}. 
\end{equation}
For $p = 1, \dots, r$, and $q = 1, \dots, e_p$, since 
\[
\dim_E H^0( X(\Gamma), \mc{O}( (k_0e_p - 1 + q) T_p) ) - \dim_E H^0( X(\Gamma), \mc{O}( (k_0e_p - 1 + (q-1)) T_p) ) = \deg T_p,
\]
there exist rational functions $b_{q,p,i}$ for $i=1, \dots, \deg T_p$ in $H^0( X(\Gamma), \mc{O}( (k_0e_p - 1 + q) T_p) )$ that are $E$-linearly independent, and that are regular everywhere on $X(\Gamma)$ except at $T_p$, where they have a pole of order $k_0e_p - 1 + q$.

Varying $p,q$, and $i$, there are $d_{\Gamma}$ such functions $\{b_{q,p,i}\}_{q,p,i}$, and $b_{q,p,i} \in S_{k_0 + 1} \setminus S_{k_0}$. It is clear from construction that $\{ b_{q,p,i} \}_{q,p,i} \subseteq S_{k_0 + 1}$ are $E$-linearly independent, which in light of equation \ref{codim-S_k} implies that their images constitute an $E$-basis of $S_{k_0 + 1}/S_{k_0}$. 

For any $m \geq 0$, note that 
\[
\text{ord}_{T_p}(j^m b_{q,p,i}) = -me_p - (k_0e_p - 1 + q) = -[ (k_0 + m)e_p - 1 + q ]. 
\]

Since $\{ j^m b_{q,p,i} \}_{q,p,i} \subseteq S_{k_0 + m + 1} \setminus S_{k_0 + m}$ are $E$-linearly independent, equation \ref{codim-S_k} again implies that their images make up an $E$-basis of $S_{k_0 + m + 1} / S_{k_0 + m}$. Now let $d_0' = \dim_E S_{k_0}$ and fix an $E$-basis $\{ c_1, \dots, c_{d_0'} \}$ of $S_{k_0}$. For $k > k_0$, the above discussion yields the $E$-basis
\[
\{c_1, \dots, c_{d_0'} \} \cup \bigcup_{t=0}^{k - k_0 - 1} \{ j^t b_{q,p,i} \}_{q,p,i}
\]
of $S_k$. By scaling the $c_v$'s and the $b_{q,p,i}$'s by elements of $\Q^\times$, we may assume that they are integral (i.e., they lie in $\mathscr{S}_{k_0}$ and $\mathscr{S}_{k_0 + 1}$, respectively). Then since $j \in \mathscr{M}_1$ is integral, $j^t b_{q,p,i} \in \mathscr{S}_{k}$ are integral as well.  Fixing a $\Z$-basis $\{ \alpha_1, \dots, \alpha_{[E : \Q]} \}$ of the ring of integers $\mc{O}_E$, we get the $\Q$-basis of $S_k$:
\[
\bigcup_{u=1}^{[E : \Q]} \left( \{\alpha_u c_1, \dots, \alpha_u c_{d_0'} \} \cup \bigcup_{t=0}^{k - k_0 - 1} \{ \alpha_u j^t b_{q,p,i} \}_{q,p,i} \right). 
\]
Since each $\mathscr{S}_k$ is an $\mc{O}_E$-module, the $\alpha_u c_v$ and $\alpha_u j^t b_{q,p,i}$ are integral as well.  

Let $\alpha_u^\sigma c_v^\sigma$ and $\alpha_u^\sigma j^t b_{q,p,i}^\sigma$ denote the images of $\alpha_u c_v$ and $\alpha_v j^t b_{q,p,i}$, respectively, in $S_k \otimes_{E, \sigma} \C$. Each $\alpha_u^\sigma c_v^\sigma \Delta^{k_0} \in S_{12k_0}(\Gamma_\sigma)_\C$, and $\alpha_u^\sigma b_{q,p,i}^\sigma \Delta^{k_0 + 1} \in S_{12(k_0 + 1)}(\Gamma_\sigma)_\C$. There is a positive constant $C_1$ such that the estimates
\begin{align*}
\sup_{z_\sigma \in \Gamma_\sigma \backslash \mf{h}} |(\alpha_u^\sigma c_v^\sigma \Delta^{k_0}) (z)| &\leq C_1 \\
\sup_{z_\sigma \in \Gamma_\sigma \backslash \mf{h}} |(\alpha_u^\sigma b_{q,p,i}^\sigma \Delta^{k_0 + 1})(z)| &\leq C_1 \\
|j(z)| &\leq C_1 e^{2\pi y} \\
|\Delta(z)| &\leq C_1 e^{-2\pi y}
\end{align*}
hold for all $\sigma, u, v$, and triples $(q,p,i)$. Then 
\begin{align*}
\| \alpha_u^\sigma c_v^\sigma \Delta^k\|_{\text{Pet}}^2 &= \frac{1}{d_{\Gamma_\sigma}} \int_{\mc{F}_{\Gamma_\sigma}} |(\alpha_u^\sigma c_v^\sigma \Delta^k) (z)|^2 (4\pi y)^{12k} \ \frac{dx dy}{y^2} \\
&\leq \frac{C_1^2}{d_{\Gamma_\sigma}} \int_{\mc{F}_{\Gamma_\sigma}} |\Delta^{k - k_0} (z)|^2 (4\pi y)^{12k} \ \frac{dx dy}{y^2} \\
&\leq \frac{C_1^{2(k - k_0) + 2} }{d_{\Gamma_\sigma}} \int_{\mc{F}_{\Gamma_\sigma}} e^{-4\pi y (k-k_0)} (4\pi y)^{12k} \ \frac{dx dy}{y^2} \\
&\leq \frac{C_1^{2(k - k_0) + 2} w}{d_{\Gamma_\sigma}} \int_0^\infty e^{-4\pi y(k-k_0)} (4\pi y)^{12k} \ \frac{dx dy}{y^2} \\
&\leq \left( \frac{k}{k-k_0} \right)^{12k} e^{C_2 k},
\end{align*}
for some constant $C_2$, and where we pick a connected fundamental domain $\mc{F}_{\Gamma_\sigma}$ for $\Gamma_\sigma$ contained in a vertical strip of the form $\{ (x, y) : \beta \leq x \leq \beta + w, 0 < y \}$ for some $\beta \in \R$, where $w$ is the width of the cusp $\infty$ for $\Gamma_\sigma$. Similarly, for $t = 0, \dots, k - k_0 - 2$, 
\begin{align*}
\| \alpha_u^\sigma b_{q,p,i}^\sigma j^t \Delta^k \|_{\text{Pet}}^2 &= \frac{1}{d_{\Gamma_\sigma}} \int_{\mc{F}_{\Gamma_\sigma}} |(\alpha_u^\sigma b_{q,p,i}^\sigma j^t \Delta^k) (z)|^2 (4\pi y)^{12k} \ \frac{dx dy}{y^2} \\
&= \frac{1}{d_{\Gamma_\sigma}} \int_{\mc{F}_{\Gamma_\sigma}} |(\alpha_u^\sigma b_{q,p,i}^\sigma \Delta^{k_0 + 1})(z)|^2 |(j^t \Delta^{k - k_0 - 1}) (z)|^2 (4\pi y)^{12k} \ \frac{dx dy}{y^2} \\
&\leq \frac{C_1^2}{d_{\Gamma_\sigma}} \int_{\mc{F}_{\Gamma_\sigma}} |(j^t \Delta^{k - k_0 - 1}) (z)|^2 (4\pi y)^{12k} \ \frac{dx dy}{y^2} \\
&\leq \frac{C_1^{2t + 2k - 2k_0}}{d_{\Gamma_\sigma}} \int_{\mc{F}_{\Gamma_\sigma}} e^{-4\pi y (k - k_0 - t - 1)} (4\pi y)^{12k} \ \frac{dx dy}{y^2} \\
&\leq \frac{C_1^{2t + 2k - 2k_0} w}{d_{\Gamma_\sigma}} \int_0^\infty e^{-4\pi y (k - k_0 - t - 1)} (4\pi y)^{12k} \ \frac{dx dy}{y^2} \\
&\leq \left( \frac{k}{k - k_0 - t - 1} \right)^{12k} e^{C_3 k}
\end{align*}
for some constant $C_3$. Then
\begin{align*}
\|\alpha_u c_v\|_{k, \infty}^2 &= \frac{1}{[E : \Q]} \sum_\sigma \|\alpha_u^\sigma c_v^\sigma \Delta^k \|_{\text{Pet}}^2 \leq \left( \frac{k}{k-k_0} \right)^{12k} e^{C_2 k}, \\
\|\alpha_u b_{q,p,i}j^t\|_{k, \infty}^2 &= \frac{1}{[E : \Q]} \sum_\sigma \|\alpha_u^\sigma b_{q,p,i}^\sigma j^t \Delta^k \|_{\text{Pet}}^2 \leq \left( \frac{k}{k - k_0 - t - 1} \right)^{12k} e^{C_3 k}.
\end{align*}
Since $\alpha_u c_v$ and $\alpha_u b_{q,p,i} j^t$ are all integral and hence their $p$-norms are all at most $1$ for every finite place $p$, we have
\begin{align*}
\lambda_k( \alpha_u c_v ) &\geq -\log \| \alpha_u c_v \|_{k, \infty} \geq 6k \log \left( \frac{k - k_0}{k} \right) - C_4 k, \\
\lambda_k(\alpha_u b_{q,p,i} j^t) &\geq -\log \|\alpha_u b_{q,p,i} j^t\|_{k, \infty} \geq 6k \log \left( \frac{k - k_0 - t - 1}{k} \right) - C_4 k,
\end{align*}
for a sufficiently large constant $C_4$. 

Now, let $d_0 = \dim_\Q S_{k_0} = [E : \Q] d_0'$. For $1 \leq i \leq d_0$, any $i$ elements of the set 
\[
\bigcup_{u=1}^{[E : \Q]} \{\alpha_u c_1, \dots, \alpha_u c_{d_0'} \} 
\]
are $\Q$-linearly independent. Hence, for $1 \leq i \leq d_0$, 
\[
\lambda_{k,i} \geq 6k \log \left( \frac{k - k_0}{k} \right) - C_4 k.
\]

For $d_0 + t_0 d_{\Gamma} [E : \Q] + 1 \leq i \leq d_0 + (t_0 + 1) d_{\Gamma} [E : \Q]$, where $t_0 = 0, \dots, k - k_0 - 2$, we can take the subset
\[
\bigcup_{u=1}^{[E : \Q]} \left( \{\alpha_u c_1, \dots, \alpha_u c_{d_0'} \} \cup \bigcup_{t=0}^{t_0 - 1} \{ \alpha_u j^t b_{q,p,i} \}_{q,p,i} \right) \cup S,
\]
where $S \subseteq \bigcup_{u = 1}^{[E : \Q]} \{ \alpha_u b_{q,p,i} j^{t_0} \}$ is any subset of cardinality $i - d_0 - t_0 d_\Gamma [E : \Q]$. (If $t_0 = 0$, then the set $\bigcup_{t=0}^{t_0 - 1} \{ \alpha_u j^t b_{q,p,i} \}_{q,p,i}$ in the above union should be interpreted as the empty set.) This set of $i$ $\Q$-linearly independent elements shows that for $i$ in the above range, 
\[
\lambda_{k,i} \geq 6k \log \left( \frac{k - k_0 - t_0 - 1}{k} \right) - C_4 k. 
\]
Noting that $t_0 + 1 = \left\lceil \frac{i - d_0}{d_\Gamma [E : \Q]} \right\rceil$, we conclude that: 
\begin{itemize}
\item If $1 \leq i \leq d_0$, then 
\[
\lambda_{k,i} \geq 6k \log \left( \frac{k - k_0}{k} \right) - C_4 k.
\]

\item If $d_0 + 1 \leq i \leq d_0 + (k - k_0 - 1)d_\Gamma [E : \Q]$, then 
\[
\lambda_{k,i} \geq 6k \log \left( \frac{k - k_0 - \left\lceil \frac{i - d_0}{d_\Gamma [E : \Q]} \right\rceil}{k} \right) - C_4 k.
\]

\item If $i > d_0 + (k - k_0 - 1)d_\Gamma [E : \Q]$, the above expression still holds if we interpret the right hand side to be $-\infty$. 
\end{itemize}
This gives us the lower bounds in the proposition. 
\end{proof}

We now show that the sequence of measures $(\nu_k)_k$ is \textit{uniformly tight}: namely, given any $\varepsilon > 0$, there exists a compact set $K \subseteq \R$ for which $\nu_k(\R \setminus K) \leq \varepsilon$ for all $k$. 

\begin{Prop}
\label{uniform-tightness-prop}
The sequence of measures $(\nu_k)_k$ is uniformly tight, and hence $\nu$ is a probability measure and the vague convergence $\nu_k \to \nu$ is in fact weak convergence. Furthermore, $\nu$ has support bounded above. 
\end{Prop}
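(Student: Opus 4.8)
The plan is to establish uniform tightness by combining the upper bound on heights from Proposition \ref{height-upper-bound-prop} with the lower bounds on the successive maxima from Proposition \ref{lower-bounds-successive-max}. Uniform tightness of $(\nu_k)_k$ means: for every $\varepsilon > 0$ there is a compact interval $[-A, B] \subseteq \R$ with $\nu_k(\R \setminus [-A,B]) \leq \varepsilon$ for all $k$. Since $\nu_k$ is supported on the points $\tfrac{1}{k}\lambda_{k,i}$, this amounts to showing that, apart from at most $\varepsilon d_k$ of the indices $i$, we have $-A \leq \tfrac{1}{k}\lambda_{k,i} \leq B$. The upper bound $\tfrac{1}{k}\lambda_{k,i} \leq C$ for all $i$ and all $k$ is immediate from Proposition \ref{height-upper-bound-prop}, so it also gives the ``support bounded above'' claim: any weak (or vague) limit is supported in $(-\infty, C]$. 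It remains to handle the lower tail.

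For the lower tail, I would use the estimates of Proposition \ref{lower-bounds-successive-max}. Fix $\varepsilon > 0$. I want to find $A > 0$ such that the number of indices $i$ with $\lambda_{k,i} < -Ak$ is at most $\varepsilon d_k$ for all large $k$. Recall $d_k = \dim_\Q S_k = \deg(\mathscr{L}_\Q^{\otimes k}(-D)) - g + 1$, which grows linearly in $k$; write $d_k = ak + O(1)$ with $a = \deg(\mathscr{L}_\Q) = d_\Gamma[E:\Q]$ (after accounting for the $E$-structure, $\dim_\Q = [E:\Q]\dim_E$). For an index of the form $i = d_0 + t_0 d_\Gamma[E:\Q] + (\text{remainder})$, Proposition \ref{lower-bounds-successive-max} gives
\[
\lambda_{k,i} \geq 6k \log\!\left( \frac{k - k_0 - \lceil (i-d_0)/(d_\Gamma[E:\Q]) \rceil}{k} \right) - C_4 k.
\]
Setting $x = i/d_k \in (0,1]$ so that $\lceil(i-d_0)/(d_\Gamma[E:\Q])\rceil \approx xk$, the right-hand side is approximately $6k\log(1-x) - C_4 k$. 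Thus $\tfrac{1}{k}\lambda_{k,i} \geq 6\log(1 - i/d_k) - C_4 + o(1)$ uniformly. Given $\varepsilon$, choose $x_0 = x_0(\varepsilon) < 1$ with $1 - x_0 < \varepsilon/2$ (say), and set $A := -6\log(1 - x_0) + C_4 + 1$. Then for all $i \leq x_0 d_k$ and $k$ large we get $\lambda_{k,i} \geq -Ak$, so at most $(1 - x_0)d_k + O(1) \leq \varepsilon d_k$ indices can have $\tfrac{1}{k}\lambda_{k,i} < -A$. Combined with the upper bound this gives $\nu_k(\R \setminus [-A, C]) \leq \varepsilon$ for all large $k$; the finitely many remaining small $k$ are handled trivially (each $\nu_k$ is a finitely supported probability measure, so enlarge $A$ to absorb them). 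Hence $(\nu_k)_k$ is uniformly tight.

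Finally I would combine uniform tightness with the vague convergence $\nu_k \to \nu$ already established in \eqref{vague-conv-limit-eqn}. Uniform tightness implies that no mass escapes to $\pm\infty$, so the total mass of the vague limit is $1$, i.e., $\nu$ is a probability measure; equivalently, $\nu_k(h) \to \nu(h)$ for every bounded continuous $h$ (not just $h \in C_c(\R)$), which is precisely weak convergence. The ``support bounded above'' assertion follows since $\nu_k$ is supported in $(-\infty, C]$ for all $k$ by Proposition \ref{height-upper-bound-prop}, and this closed set is preserved under weak limits.

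The main obstacle is the bookkeeping in converting the index-wise lower bounds of Proposition \ref{lower-bounds-successive-max} into a statement about the \emph{proportion} of indices lying below a given threshold: one must carefully track how $\lceil (i-d_0)/(d_\Gamma[E:\Q]) \rceil$ compares to $i/d_k$ as $k \to \infty$, using $d_k = d_\Gamma[E:\Q]\,k + O(1)$, and confirm that the function $x \mapsto 6\log(1-x) - C_4$ controls $\tfrac1k\lambda_{k,i}$ from below uniformly for $i/d_k$ bounded away from $1$. The logarithm blowing up as $x \to 1$ is exactly why only an $\varepsilon$-fraction of the mass can sit arbitrarily far down the real line, which is what tightness needs; once this is pinned down the rest is routine.
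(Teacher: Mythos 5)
Your proof is correct and follows essentially the same strategy as the paper: the upper bound $\lambda_{k,i}/k \le C$ from Proposition~\ref{height-upper-bound-prop} gives the upper part of the support, the index-wise lower bounds of Proposition~\ref{lower-bounds-successive-max} control the lower tail, and the passage from vague to weak convergence is a standard consequence of tightness. The only difference is a cosmetic reparameterization of the tail estimate: you fix a proportion $x_0 < 1$ of indices to retain and then determine the cutoff $A = -6\log(1-x_0) + C_4 + 1$, whereas the paper fixes the cutoff $a$ and bounds the fraction of indices with $\lambda_{k,i}/k < -a$ by $\sim C_5 e^{-a/6}$; both rest on inverting the same inequality $6\log\bigl((k - k_0 - \lceil(i-d_0)/(d_\Gamma[E:\Q])\rceil)/k\bigr) - C_4 < -a$. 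Your observation that $\lceil(i-d_0)/(d_\Gamma[E:\Q])\rceil \approx xk$ with $x = i/d_k$, uniformly for $x \le x_0$, is exactly the bookkeeping the paper carries out, and your handling of the finitely many small $k$ and of the $O(1)$ remainder indices (those beyond $d_0 + (k-k_0-1)d_\Gamma[E:\Q]$) matches as well. The paper concludes via Prohorov's theorem and uniqueness of vague limits, while you argue directly that tightness forces the vague limit to have total mass $1$; these are interchangeable standard steps.
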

\begin{proof}
By proposition \ref{height-upper-bound-prop}, there is a constant $C$ (which we may take to be positive) such that $\lambda_{k,i}/k \leq C$ for all $k$ and all $i$, meaning that the supports of the measures $\nu_k$ are all contained in $(-\infty, C]$. Hence for any positive real number $a$, $\nu_k(\R \setminus [-a,C]) = \nu_k((-\infty, -a)) + \nu_k((C, \infty)) = \nu_k((-\infty, -a))$. Thus, to show uniform tightness, it suffices to show that for any $\varepsilon > 0$, there is a positive real number $a_1$ and a positive integer $k_1$ such that for all reals $a \geq a_1$ and all integers $k \geq k_1$, $\nu_{k}((-\infty, -a)) < \varepsilon$. (For each $i < k_1$, there is some compact set $K_i$ such that $\nu_i(\R \setminus K_i) < \varepsilon$. Letting $K' = K_1 \cup \cdots \cup K_{k_1 - 1} \cup [-a,C]$, we note that $\nu_k( \R \setminus K' ) \leq \varepsilon$ for all $k$.)

We keep the conventions from proposition \ref{lower-bounds-successive-max}. Take any $k > k_0$. Given a positive real number $a$, 
\[
\nu_k( (-\infty, -a) ) = \frac{ \#\{ i : \lambda_{k,i}/k < -a \} }{\dim_\Q S_k}. 
\]
We first restrict to counting only those $i$ with $d_0 + 1 \leq i \leq d_0 + (k - k_0 - 1)d_\Gamma [E : \Q]$, noting that the remaining $i$'s contribute at most $d_0 + d_\Gamma [E : \Q]$ to the count (which is a constant independent of $k$). Then for $i$ in the above range, proposition \ref{lower-bounds-successive-max} gives
\[
6 \log \left( \frac{k - k_0 - \left\lceil \frac{i - d_0}{d_\Gamma [E : \Q]} \right\rceil}{k} \right) - C_4 \leq \lambda_{k,i}/k < -a,
\]
which implies that
\[
i > d_0 + (k - k_0 - 1 - C_5 k e^{-a/6}) d_\Gamma [E : \Q]
\]
for $C_5 = e^{C_4/6}$. Hence,
\[
\nu_k( (-\infty, -a) ) = \frac{\#\{ i : \lambda_{k,i}/k < -a \}}{\dim_\Q S_k} \leq \frac{(C_5 k e^{-a/6} + 2)d_\Gamma [E : \Q] + d_0 + d_\Gamma [E : \Q]}{\dim_\Q S_k}.
\]
Noting that $\dim_\Q S_k$ grows linearly with $k$, we may pick $k_1$ and $a_1$ large enough so that for any $k \geq k_1$ and $a \geq a_1$, $\nu_k((-\infty, -a)) < \varepsilon$. This concludes the proof of uniform tightness.  

To see that $\nu$ is a probability measure, we note that by Prohorov's theorem, uniform tightness implies that $(\nu_k)_k$ admits a weakly convergent subsequence $(\nu_{k_m})_{m}$. If $\nu_{k_m} \to \omega$ weakly, then $\omega$ is also the vague limit of the $\nu_{k_m}$. Then by uniqueness of vague limits, we conclude that $\omega = \nu$. Since $\omega$ is a probability measure, we conclude that $\nu$ is one as well. It is a standard result that if the limit measure is also a probability measure, then vague convergence is equivalent to weak convergence, and hence $\nu_k$ converges weakly to $\nu$. Finally, it is clear that the support of $\nu$ is contained in $(-\infty, C]$. 
\end{proof}

\section{Comparison of measures - Proof of theorem \ref{measure-comparison-thm-intro}}
\label{measure-comp-proof-section}
\subsection{Notations} 
\label{conventions-comp-measures}
Given finite index subgroups $\Gamma' \subseteq \Gamma$ of $\Gamma(1)$, we say that \textit{$\Gamma' \subseteq \Gamma$ is defined over $E$}, where $E$ is a number field, if there exist smooth projective geometrically connected $E$-curves $X(\Gamma')$ and $X(\Gamma)$ such that:
\begin{itemize}
\item the base change of $X(\Gamma')$ and $X(\Gamma)$ to $\C$ give the modular curves $X(\Gamma')_\C$ and $X(\Gamma)_\C$, respectively, and 
\item there exist $E$-morphisms $\pi : X(\Gamma') \to X(\Gamma)$ and $\pi_\Gamma : X(\Gamma) \to \P_E^1$ with base change to $\C$ equal to the natural maps $X(\Gamma')_\C \to X(\Gamma)_\C$ and $X(\Gamma)_\C \to \P_\C^1$ induced by the inclusion $\Gamma' \subseteq \Gamma$, and by the $j$-function, respectively. 
\end{itemize} 
In particular, there is also an $E$-morphism $\pi_\Gamma \circ \pi : X(\Gamma') \to \P_E^1$ with base change to $\C$ equal to the natural map $X(\Gamma')_\C \to \P_\C^1$ given by the $j$-function. 

Given a finite index subgroup $\Gamma \subseteq \Gamma(1)$, we also say that \textit{$\Gamma$ is defined over $E$} to mean that $\Gamma \subseteq \Gamma(1)$ is defined over $E$. In this case, the above conditions are equivalent to the existence of models $X(\Gamma)$ and $X(\Gamma) \to \P_E^1$ over $E$ of $X(\Gamma)_\C$ and the $j$-function map $X(\Gamma)_\C \to \P_\C^1$, respectively. Note that if $\Gamma' \subseteq \Gamma$ is defined over $E$, then so are $\Gamma'$ and $\Gamma$.

For the rest of this section, suppose that $\Gamma' \subseteq \Gamma$ is defined over a number field $E$, and let $X(\Gamma'), X(\Gamma)$, and $\pi : X(\Gamma') \to X(\Gamma)$ be as above. As in \S \ref{integral-models-ss}, let $X(\Gamma)_\Z$ denote the normalization of $\P_\Z^1$ in $X(\Gamma)$ under the natural map $X(\Gamma) \to \P_E^1 \to \P_\Z^1$, and define $X(\Gamma')_\Z$ analogously. The natural map $\pi : X(\Gamma') \to X(\Gamma)$ induces a morphism $X(\Gamma')_\Z \to X(\Gamma)_\Z$ (over $\P_\Z^1$). For any choice of a desingularization $\mathscr{X}(\Gamma) \to X(\Gamma)_\Z$, there exists a desingularization $\mathscr{X}(\Gamma') \to X(\Gamma')_\Z$ along with a morphism $\pi_{\Z} : \mathscr{X}(\Gamma') \to \mathscr{X}(\Gamma)$ that extends $X(\Gamma')_\Z \to X(\Gamma)_\Z$. 

Let $\mathscr{L}' := \pi_\Z^*\mathscr{L}$, where $\mathscr{L} = \pi_{\Gamma, \Z}^* \mc{O}_{\P_\Z^1}(\ov{\infty})$ (see \S \ref{integral-models-ss}). Keeping the same conventions as in \S \ref{ad-Q-vb-rat-cusp-forms-ss}, we let 
\begin{align*}
\mathscr{M}_k &:= H^0(\mathscr{X}(\Gamma), \mathscr{L}^{\otimes k}), \quad \qquad \ \mathscr{M}_k' := H^0(\mathscr{X}(\Gamma'), \mathscr{L}'^{\otimes k}) \\
M_k &:= \mathscr{M}_k \otimes_\Z \Q, \qquad \quad \quad \qquad \ \ M_k' := \mathscr{M}_k' \otimes_\Z \Q \\
S_k &:= H^0(X(\Gamma), \mathscr{L}_\Q^{\otimes k}(-D)),  \quad \quad S_k' := H^0(X(\Gamma'), \mathscr{L'}_\Q^{\otimes k}(-D')) \\
\mathscr{S}_k &:= \mathscr{M}_k \cap S_k, \qquad \qquad \qquad \quad \mathscr{S}_k' := \mathscr{M}_k' \cap S_k',
\end{align*}
where $D$ and $D'$ are the formal sums of the pre-images of $\infty \in \P_E^1$ in $X(\Gamma)$ and $X(\Gamma')$ under the natural maps $X(\Gamma) \to \P_E^1$ and $X(\Gamma') \to \P_E^1$, respectively. We remark that these definitions are independent of the choices of the regular models $\mathscr{X}(\Gamma)$ and $\mathscr{X}(\Gamma')$. 

Given an extension $F/E$ of number fields, let $X(\Gamma)_F := X(\Gamma) \otimes_E F$ denote the base change, and let $X(\Gamma)_{F, \Z}$ denote the normalization of $\P_\Z^1$ under the natural map $X(\Gamma)_F \to \P_\Z^1$. The map $X(\Gamma)_F \to X(\Gamma)$ extends to a map between the normalizations $X(\Gamma)_{F,\Z} \to X(\Gamma)_\Z$. Next, let $\mathscr{X}(\Gamma)_F$ be a desingularization of $X(\Gamma)_{F, \Z}$ over $F$ that admits a morphism to $\mathscr{X}(\Gamma)$ extending the natural map $X(\Gamma)_{\Z, F} \to X(\Gamma)_\Z$. Let $\mathscr{M}_{F,k}$ denote the global sections of the pullback of $\mathscr{L}^{\otimes k}$ to $\mathscr{X}(\Gamma)_F$, and define $M_{F,k}, S_{F,k}$, and $\mathscr{S}_{F,k}$ in the obvious manner. 

For any place $v$ of $\Q$, we let $\| \cdot \|_{k,v}, \| \cdot \|_{k, v}'$, and $\| \cdot \|_{F,k,v}$ denote the local norms on $S_k \otimes_\Q \C_v, S_k' \otimes_\Q \C_v$, and $S_{F,k} \otimes_\Q \C_v$, respectively. Finally, let $\lambda_k, \lambda_k'$, and $\lambda_{F,k}$ be the naive adelic height functions on $S_k, S_k'$, and $S_{F,k}$, respectively. 

We adopt the convention that we drop the subscript $E$ for objects defined over $E$ (as in \S \ref{integral-models-ss}), but include the subscript for objects over subfields or field extensions of $E$.

\subsection{Main results}
\begin{Lemma}
\label{global-sections-intersection}
Let $E$ be a number field, $X$ and $Y$ smooth projective integral curves over $E$, and $\pi : Y \to X$ a non-constant map of curves over $E$. Suppose $\mathscr{X}$ and $\mathscr{Y}$ are regular projective models of $X$ and $Y$, respectively, over $\spec(\mc{O}_E)$, and let $\pi_\Z : \mathscr{Y} \to \mathscr{X}$ be a $\spec(\mc{O}_E)$-morphism extending $\pi$. For any line bundle $\mathscr{L}$ on $\mathscr{X}$ with $L := \mathscr{L}|_X$, we have the equality
\[
H^0(\mathscr{X}, \mathscr{L}) = H^0(X, L) \cap H^0(\mathscr{Y}, \pi_\Z^* \mathscr{L}),
\]
where the intersection takes place in $H^0(Y, \pi^* L)$. 
\end{Lemma}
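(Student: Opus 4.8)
The plan is to realise all four cohomology groups as subgroups of $H^0(Y, \pi^*L)$ via compatible injections, to reduce the non-trivial inclusion to an effectivity statement for a divisor on $\mathscr{X}$, and then to transfer non-negativity of vertical orders from $\mathscr{Y}$ down to $\mathscr{X}$.

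First I would set up the commutative square of restriction and pullback maps. Restriction to the (dense) generic fibres gives injections $H^0(\mathscr{X}, \mathscr{L}) \hookrightarrow H^0(X, L)$ and $H^0(\mathscr{Y}, \pi_\Z^*\mathscr{L}) \hookrightarrow H^0(Y, \pi^*L)$, injective because $\mathscr{X}$ and $\mathscr{Y}$ are integral. Pullback along the dominant maps $\pi$ and $\pi_\Z$ gives injections $H^0(X, L) \hookrightarrow H^0(Y, \pi^*L)$ and $H^0(\mathscr{X}, \mathscr{L}) \hookrightarrow H^0(\mathscr{Y}, \pi_\Z^*\mathscr{L})$, via $\mc{O}_X \hookrightarrow \pi_*\mc{O}_Y$, $\mc{O}_{\mathscr{X}} \hookrightarrow (\pi_\Z)_*\mc{O}_{\mathscr{Y}}$ and the projection formula. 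With these identifications the inclusion ``$\subseteq$'' is immediate. For ``$\supseteq$'', I would take $t \in H^0(Y,\pi^*L)$ which equals $\pi^*s_0$ for some $s_0 \in H^0(X,L)$ and which also extends to a section $\wt{t} \in H^0(\mathscr{Y}, \pi_\Z^*\mathscr{L})$. Since $\mathscr{X}$ is normal, it suffices to show that the Weil divisor $\text{div}_{\mathscr{L}}(s_0)$ on $\mathscr{X}$ is effective; its horizontal part is $\ov{\text{div}_L(s_0)} \geq 0$, so the only point is to check that $\text{ord}_\xi(s_0) \geq 0$ at the generic point $\xi$ of each irreducible component $V$ of each special fibre $\mathscr{X}_{\mf{p}}$, for a nonzero prime $\mf{p}$ of $\mc{O}_E$.

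The key step is to produce, for each such $V$, a vertical prime divisor of $\mathscr{Y}$ dominating it. Since $\mathscr{Y} \to \spec(\mc{O}_E)$ is proper and $\mathscr{X} \to \spec(\mc{O}_E)$ is separated, $\pi_\Z$ is proper; being dominant between integral schemes it is surjective, so $\mathscr{Y}_{\mf{p}} \to \mathscr{X}_{\mf{p}}$ is surjective. By flatness of the two models over the Dedekind ring $\mc{O}_E$, the fibres $\mathscr{X}_{\mf{p}}$ and $\mathscr{Y}_{\mf{p}}$ are equidimensional of dimension $1$; hence among the finitely many irreducible components $W_1, \dots, W_m$ of $\mathscr{Y}_{\mf{p}}$ at least one, say $W$, satisfies $\pi_\Z(W) = V$, because each $\pi_\Z(W_i)$ is irreducible and closed of dimension $\leq 1$ and the irreducible curve $V$ is not a union of finitely many proper closed subsets. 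Let $\xi'$ be the generic point of $W$, a codimension-one point of the regular surface $\mathscr{Y}$ with $\pi_\Z(\xi') = \xi$. Then $\mc{O}_{\mathscr{X}, \xi} \to \mc{O}_{\mathscr{Y}, \xi'}$ is a local homomorphism of discrete valuation rings with some ramification index $e \geq 1$, so $\text{ord}_{\xi'}(\pi_\Z^*s) = e \cdot \text{ord}_\xi(s)$ for every rational section $s$ of $\mathscr{L}$ (computed in a local trivialisation near $\xi$ and its pullback near $\xi'$). Applying this with $s = s_0$ and using $\pi_\Z^*s_0 = \wt{t}$ (they agree on the dense open $Y$), the regularity of $\wt{t}$ gives $e \cdot \text{ord}_\xi(s_0) = \text{ord}_{\xi'}(\wt{t}) \geq 0$, hence $\text{ord}_\xi(s_0) \geq 0$. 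As $\xi$ was arbitrary, $\text{div}_{\mathscr{L}}(s_0) \geq 0$ and $s_0 \in H^0(\mathscr{X}, \mathscr{L})$, completing the proof.

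The main obstacle is precisely the geometric input of the key step: that every vertical prime divisor of $\mathscr{X}$ is dominated by a vertical prime divisor of $\mathscr{Y}$. This uses flatness of both models over $\mc{O}_E$ (to force the special fibres to be purely one-dimensional) together with surjectivity of $\pi_\Z$; note that we neither need nor may assume that $\pi_\Z$ is finite or flat, since it can contract components. The remaining ingredients — injectivity of the four maps, the normality argument identifying effective divisors with regular sections, and the elementary ramification computation for the extension of discrete valuation rings $\mc{O}_{\mathscr{X},\xi} \subseteq \mc{O}_{\mathscr{Y},\xi'}$ — are routine.
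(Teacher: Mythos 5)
Your proof is correct, and it takes a genuinely different route from the paper's. The paper passes to the normalization $\eta\colon\mathscr{X}'\to\mathscr{X}$ of $\mathscr{X}$ in $Y$, observes that $\mathscr{X}'$ is a normal model of $Y$ birational to $\mathscr{Y}$, resolves the indeterminacy (via Liu's theorem 9.2.7) by a regular model $\mathscr{Y}'$ dominating both, uses Zariski's main theorem twice to identify $H^0(\mathscr{Y},\pi_\Z^*\mathscr{L})$ with $H^0(\mathscr{X}',\eta^*\mathscr{L})$, and then finishes with a local integral-closure calculation: on an affine chart $\spec(A)$ trivializing $\mathscr{L}$, the section lies in $A'\cap(A\otimes_\Z\Q)=A$ because $A$ is integrally closed in its fraction field. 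Your argument instead avoids constructing any auxiliary models: you invoke the characterization of regular sections on a normal integral scheme by effectivity of the associated Weil divisor, dispose of the horizontal part immediately, and reduce the vertical part to the geometric fact that $\pi_\Z$, being proper and dominant, sends some component of each $\mathscr{Y}_{\mathfrak p}$ onto each component of $\mathscr{X}_{\mathfrak p}$, at which point the ramification relation $\operatorname{ord}_{\xi'}=e\cdot\operatorname{ord}_\xi$ between the two codimension-one discrete valuations closes the argument. The paper's route converts the whole problem into a single commutative-algebra step at the cost of quoting resolution of birational indeterminacies between arithmetic surfaces; your route stays on $\mathscr{X}$ and $\mathscr{Y}$ themselves and instead relies on flatness over $\mathcal{O}_E$ and catenarity to control fiber dimensions. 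Both arguments in fact only need $\mathscr{X}$ and $\mathscr{Y}$ normal (for your proof, Serre's criterion makes all codimension-one local rings discrete valuation rings even without regularity, and for the paper's proof one could replace Liu's theorem by Lipman's), so neither is strictly more general; your proof is arguably the more elementary of the two.
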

\begin{proof}
First let $\mathscr{X}' \xrightarrow{\eta} \mathscr{X}$ denote the normalization of $\mathscr{X}$ in $Y$ with respect to the map $Y \to X \to \mathscr{X}$. Then $\mathscr{X}'$ is a normal arithmetic surface, which admits a birational map $\mathscr{Y} \dashrightarrow \mathscr{X}'$ (inducing the identity on the generic fiber $Y$). By \cite{Liu} chapter 9.2, theorem 2.7, there is a projective birational morphism $\mathscr{Y}' \xrightarrow{\alpha} \mathscr{Y}$, with $\mathscr{Y}'$ regular, and a birational morphism $\mathscr{Y}' \xrightarrow{\beta} \mathscr{X}'$ lifting $\mathscr{Y} \dashrightarrow \mathscr{X}'$. The natural pullback morphisms induce the equality $H^0(\mathscr{X}', \eta^*\mathscr{L}) = H^0(\mathscr{Y}', \beta^* \eta^* \mathscr{L}) = H^0(\mathscr{Y}, \pi_\Z^* \mathscr{L})$ (since $\eta \circ \beta = \pi_\Z \circ \alpha$) in $H^0(Y, \pi^* L)$. Hence, it suffices to show that $H^0(\mathscr{X}, \mathscr{L}) = H^0(X, L) \cap H^0(\mathscr{X}', \eta^*\mathscr{L})$. 

We only need to show $\supseteq$. Take $s \in H^0(X, L) \cap H^0(\mathscr{X}', \eta^*\mathscr{L})$. Let $\spec(A) \subseteq \mathscr{X}$ be an affine open subset where $\mathscr{L}$ is trivial. Then $L$ is trivial on $A \otimes_\Z \Q$, and $\eta^* \mathscr{L}$ is trivial on $\eta^{-1}(\spec(A)) = \spec(A')$, where $A'$ is the integral closure of $A$ in the rational function field $\kappa(Y)$. The section $s$ then corresponds to an element in $A' \cap (A \otimes_\Z \Q) = A \subseteq \kappa(X)$, since $A$ is integrally closed in its fraction field $\kappa(X)$. 
\end{proof}

\begin{Lemma}
\label{norm-compatibility-lemma}
Let $\Gamma' \subseteq \Gamma$ be finite index subgroups of $\Gamma(1)$ defined over a number field $E$, and let $F/E$ be a finite extension. Then with the notation in \S \ref{conventions-comp-measures}, the local norms respect the inclusions
\begin{enumerate}[(a)]
\item $S_k \otimes_\Q \C_v \to S_k' \otimes_\Q \C_v$, and 
\item $S_k \otimes_\Q \C_v \to S_{F,k} \otimes_\Q \C_v$
\end{enumerate}
for all places $v$ of $\Q$. 
\end{Lemma}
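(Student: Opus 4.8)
The plan is to show that each of the two inclusions is an \emph{isometry} at every place $v$ of $\Q$, treating the finite and the archimedean places separately; granting this, the compatibility asserted by the lemma — and with it the equalities $\lambda_k'(\pi^*s)=\lambda_k(s)$ and $\lambda_{F,k}(s\otimes 1)=\lambda_k(s)$, which will be used in the proof of Theorem~\ref{measure-comparison-thm-intro} — follows at once. The map in (a) is the $\C_v$-linear extension of the pullback $\pi^*\colon S_k\to S_k'$; it does land in $S_k'$ because the map $X(\Gamma')\to\P_E^1$ factors through $\pi$, so $D'$ is the reduced preimage of $D$ and hence $\pi^*D\geq D'$, which forces a section vanishing along $D$ to pull back to one vanishing along $D'$. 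The map in (b) is $s\mapsto s\otimes 1$ under the flat base-change identification $S_{F,k}=S_k\otimes_E F$. Both maps are injective (pullback along a dominant morphism of integral schemes is injective on sections of a line bundle).

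\textbf{Finite places.} For $v=p$ finite the local norms on $S_k$, $S_k'$, $S_{F,k}$ are those attached to the integral lattices $\mathscr{S}_k$, $\mathscr{S}_k'$, $\mathscr{S}_{F,k}$ as in Example~\ref{ad-Q-vb-example}, so it is enough to prove that $\pi^*\mathscr{S}_k$ is saturated in $\mathscr{S}_k'$ and that $\mathscr{S}_k$ is saturated in $\mathscr{S}_{F,k}$: extending a $\Z$-basis coming from $\mathscr{S}_k$ to a $\Z$-basis of the larger lattice then gives $\|\pi^*s\|_{k,p}'=\|s\|_{k,p}$ and $\|s\otimes 1\|_{F,k,p}=\|s\|_{k,p}$. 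For the first, suppose $s\in\mathscr{S}_k$ and $\pi^*s=n\,t$ with $t\in\mathscr{S}_k'$ and $n$ a positive integer; then $\pi^*(s/n)=t\in\mathscr{S}_k'\subseteq\mathscr{M}_k'$ while $s/n\in M_k$, so Lemma~\ref{global-sections-intersection}, applied to $\pi_\Z\colon\mathscr{X}(\Gamma')\to\mathscr{X}(\Gamma)$ and the line bundle $\mathscr{L}^{\otimes k}$, gives $s/n\in\mathscr{M}_k$; since also $s/n\in S_k$ (being a scalar multiple of $s\in S_k$), we get $s/n\in\mathscr{M}_k\cap S_k=\mathscr{S}_k$, i.e.\ $s\in n\mathscr{S}_k$. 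The argument for $\mathscr{S}_k\subseteq\mathscr{S}_{F,k}$ is identical, using the variant of Lemma~\ref{global-sections-intersection} for $\mathscr{X}(\Gamma)_F\to\mathscr{X}(\Gamma)$: that lemma's proof is local and uses only that the coordinate ring of an affine chart of $\mathscr{X}(\Gamma)$ is integrally closed in its function field, so it goes through verbatim when the base ring is enlarged from $\mathscr{O}_E$ to $\mathscr{O}_F$.

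\textbf{Archimedean places.} At $v=\infty$ the norm is the normalized $L^2$ (Petersson) norm of \S\ref{rational-Pet-met-ss}. Fix an embedding $\sigma\colon E\hookrightarrow\C$ and let $\pi_\sigma\colon X(\Gamma'_\sigma)_\C\to X(\Gamma_\sigma)_\C$ be the base change of $\pi$. Since the composite $X(\Gamma')\to X(\Gamma)\to\P_E^1$ is a model over $E$ of the $j$-map for $\Gamma'$, the map $\pi_\sigma$ commutes with the $j$-maps to $\P_\C^1$, and hence is the canonical covering of modular curves, with $\Gamma'_\sigma\subseteq\Gamma_\sigma$ of index $[\Gamma:\Gamma']$. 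As $\mathscr{L}'=\pi_\Z^*\mathscr{L}$, a section $f_\sigma$ of $\mathscr{L}_\C^{\otimes k}|_{X(\Gamma_\sigma)_\C}$, which (via Proposition~\ref{expl-desc-mod-forms-prop}) corresponds to the cusp form $f_\sigma\Delta^k\in S_{12k}(\Gamma_\sigma)_\C$, pulls back to the cusp form represented on $\mf{h}$ by the same function, now regarded as an element of $S_{12k}(\Gamma'_\sigma)_\C$ via $\Gamma'_\sigma\subseteq\Gamma_\sigma$; in particular $|\pi_\sigma^*f_\sigma|_{\text{Pet}}=|f_\sigma|_{\text{Pet}}$ pointwise on $\mf{h}$. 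Writing a fundamental domain for $\Gamma'_\sigma$ as a union of $[\Gamma:\Gamma']$ translates of one for $\Gamma_\sigma$, invoking the $\Gamma_\sigma$-invariance of the Petersson density, and using $d_{\Gamma'_\sigma}=[\Gamma:\Gamma']\,d_{\Gamma_\sigma}$, one gets $\|\pi_\sigma^*(f_\sigma\Delta^k)\|_{\text{Pet}}=\|f_\sigma\Delta^k\|_{\text{Pet}}$. Summing over $\sigma$ and recalling $\|f\|_{k,\infty}^2=\frac{1}{[E:\Q]}\sum_\sigma\|f_\sigma\Delta^k\|_{\text{Pet}}^2$ for $f\in S_k$ (and similarly for $S_k'$) proves that (a) is an isometry at $v=\infty$. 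For (b), each component $X(\Gamma_\sigma)_\C$ of $\mathscr{X}(\Gamma)(\C)$ occurs $[F:E]$ times among the components $X(\Gamma_{\tau|_E})_\C$, $\tau\colon F\hookrightarrow\C$, of $\mathscr{X}(\Gamma)_F(\C)$, the $\tau$-component of $s\otimes 1$ being $s^{\tau|_E}$; hence
\[
\|s\otimes 1\|_{F,k,\infty}^2=\frac{1}{[F:\Q]}\sum_{\tau\colon F\hookrightarrow\C}\|s^{\tau|_E}\Delta^k\|_{\text{Pet}}^2=\frac{[F:E]}{[F:\Q]}\sum_{\sigma\colon E\hookrightarrow\C}\|s^\sigma\Delta^k\|_{\text{Pet}}^2=\|s\|_{k,\infty}^2 .
\]
I expect the only genuinely delicate point to be the archimedean case of (a): one must check that, for \emph{every} embedding $\sigma$, the base change of $\pi$ is indeed the canonical modular covering — which is precisely where the hypothesis that $\pi$ and the maps to $\P^1$ are defined over $E$ enters — and then carry out the fundamental-domain bookkeeping that upgrades the pointwise identity $|\pi_\sigma^*f_\sigma|_{\text{Pet}}=|f_\sigma|_{\text{Pet}}$ to the isometry $\|\pi_\sigma^*(f_\sigma\Delta^k)\|_{\text{Pet}}=\|f_\sigma\Delta^k\|_{\text{Pet}}$ of normalized Petersson norms. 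The finite places, by contrast, reduce cleanly to Lemma~\ref{global-sections-intersection}.
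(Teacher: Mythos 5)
Your proof is correct and follows essentially the same route as the paper's. At the finite places you establish saturation via Lemma~\ref{global-sections-intersection} (the paper does this for $\mathscr{M}_k \subseteq \mathscr{M}_k'$ and then passes implicitly to the $\mathscr{S}_k$'s; you work with the $\mathscr{S}_k$'s directly — a harmless simplification), and at the archimedean place you spell out the fundamental-domain computation behind the level-independence of the normalized Petersson inner product, which the paper simply invokes as known, together with the same counting of embeddings $\tau : F \hookrightarrow \C$ over each $\sigma : E \hookrightarrow \C$ for part (b). One small remark: no ``variant'' of Lemma~\ref{global-sections-intersection} is actually needed for $\mathscr{X}(\Gamma)_F \to \mathscr{X}(\Gamma)$, since $X(\Gamma)_F$ is a smooth projective integral curve over $E$ and $\mathscr{X}(\Gamma)_F$ is a regular projective model over $\mathcal{O}_E$, so the lemma applies as stated.
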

\begin{proof}
First let $v = p$ be a finite place. The inclusion of finite free $\Z$-modules $\mathscr{M}_k \subseteq \mathscr{M}'_k$ implies that there exists a $\Z$-basis $\{b_1, \dots, b_{d'} \}$ of $\mathscr{M}'_k$ for which there exist integers $n_1, \dots, n_d$ such that $n_1 b_1, \dots, n_d b_d$ is a $\Z$-basis for $\mathscr{M}_k$. Since $\mathscr{M}_k = M_k \cap \mathscr{M}'_k$ by lemma \ref{global-sections-intersection}, we must have $b_i \in \mathscr{M}_k$, and hence $n_i = 1$ for all $i = 1, \dots, d$. Given $s \in M_k \otimes_\Q \C_p$, if $s = \sum_{i=1}^d \alpha_i b_i$ with $\alpha_i \in \C_p$, then $\| s \|_{k,p} = \| s \|_{k,p}' = \max \{ |\alpha_i| \}_{i=1}^d$, by the definition of the local norms at $p$. A similar argument also shows that $\| s \|_{k,p} = \| s \|_{F,k,p}$. 

Now suppose $v = \infty$. For any $\sigma : E \to \C$, the base change of $\pi : X(\Gamma') \to X(\Gamma)$ by $\sigma$ gives $\pi_\sigma : X(\Gamma'_\sigma)_\C \to X(\Gamma_\sigma)_\C$ for finite index subgroups $\Gamma'_\sigma \subseteq \Gamma_\sigma \subseteq \Gamma(1)$. Now given any $f = (f_\sigma)_\sigma \in S_k \otimes_\Q \C \subseteq S_k' \otimes_\Q \C$, the Petersson norm of the cusp form $f_\sigma \Delta^k \in S_{12k}(\Gamma_\sigma)_\C \subseteq S_{12k}(\Gamma'_\sigma)_\C$ is independent of the groups $\Gamma_\sigma$ and $\Gamma'_\sigma$, we get $\| f \|_{k, \infty} = \| f \|_{k, \infty}'$.  

Finally, we note that for $f$ as above, 
\begin{align*}
\| f \|_{F, k, \infty}^2 &= \frac{1}{[F : \Q]} \sum_{\tau : F \to \C} \| f^\tau \Delta^k \|_{\text{Pet}}^2 \\
&= \frac{1}{[F : \Q]} \sum_{\sigma : E \to \C} \sum_{\substack{\tau \\ \tau|_E = \sigma}} \| f^\tau \Delta^k \|_{\text{Pet}}^2 \\
&= \frac{1}{[E : \Q]} \sum_{\sigma : E \to \C} \| f^\sigma \Delta^k \|_{\text{Pet}}^2 = \| f \|_{k, \infty}^2. 
\end{align*}
\end{proof}

Now suppose $\Gamma' \trianglelefteq \Gamma$ is normal, and that $\pi : X(\Gamma') \to X(\Gamma)$ is Galois over $E$. Since $X(\Gamma')_\Z$ is also the normalization of $X(\Gamma)_\Z$ in $X(\Gamma')$, the universal property of normalization implies that every element $\tau \in \text{Aut}(X(\Gamma')/X(\Gamma))$ lifts uniquely to an element $\wt{\tau} \in \text{Aut}(X(\Gamma')_\Z/X(\Gamma)_\Z)$. Let the elements of $\Aut(X(\Gamma')/X(\Gamma))$ be denoted $\tau_j$, and let $(\cdot)|_{\tau_j}$ denote the pullback map on sections induced by $\tau_j$.   

Suppose also that $F/E$ is a Galois extension of number fields. Since $X(\Gamma)$ is geometrically integral over $E$, $\Gal(X(\Gamma)_F/X(\Gamma)) = \Gal(F/E)^{\text{op}}$ (where the ``op'' means that we take the opposite group). For $\alpha_j \in \Gal(F/E)$, let $\alpha_j^*$ denote the corresponding element of $\Gal(X(\Gamma)_F/X(\Gamma))$. Again by the universal property of normalization, $\alpha_j^*$ extends uniquely to an automorphism $\Aut(X(\Gamma)_{\Z,F}/X(\Gamma)_\Z)$. We denote by $(\cdot)^{\alpha_j}$ the pullback map on sections induced by $\alpha_j^*$. 

\begin{Lemma}
\label{maxima-comparison-lemma}
Let $\Gamma' \trianglelefteq \Gamma$ be defined over $E$ and suppose that $\pi : X(\Gamma') \to X(\Gamma)$ is Galois over $E$. Let $F/E$ be a Galois extension of number fields. Then with the conventions of \S \ref{conventions-comp-measures}, we have
\begin{enumerate}[(a)]
\item $S_k^a \subseteq S_k'^a \cap S_k \subseteq S_k^{a-\log [\Gamma' : \Gamma]}$, and 
\item $S_k^a \subseteq S_{F,k}^a \cap S_k \subseteq S_k^{a - \log [F : E]}$
\end{enumerate}
for all $a \in \R$. 
\end{Lemma}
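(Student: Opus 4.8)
The plan is to obtain the left-hand inclusions in (a) and (b) directly from Lemma~\ref{norm-compatibility-lemma}, and the right-hand inclusions from an averaging (trace) argument over the relevant finite group, again invoking Lemma~\ref{norm-compatibility-lemma} to transport naive adelic heights between the spaces.

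For the left-hand inclusions (which need only that $\Gamma'\subseteq\Gamma$ is defined over $E$), recall that by Lemma~\ref{norm-compatibility-lemma} the local norms agree along the inclusions $S_k\hookrightarrow S_k'$ and $S_k\hookrightarrow S_{F,k}$, so $\lambda_k(s)=\lambda_k'(s)=\lambda_{F,k}(s)$ for every $s\in S_k$. Hence any $s\in S_k$ with $\lambda_k(s)\geq a$ is also an element of $S_k'$ (resp.\ $S_{F,k}$) of height $\geq a$; passing to $\Q$-spans gives $S_k^a\subseteq S_k'^a$ and $S_k^a\subseteq S_{F,k}^a$. Since $S_k^a\subseteq S_k$ by definition, this is exactly the left-hand inclusion in (a) and (b).

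For the right-hand inclusion in (a), let $G:=\Aut(X(\Gamma')/X(\Gamma))$, acting on $S_k'$ through the pullback maps $(\cdot)|_{\tau_j}$; since $\pi$ is Galois over $E$ we have $|G|=\deg(\pi)=[\Gamma':\Gamma]=:n$. Two facts will be needed. First, the image of $S_k$ in $S_k'$ is precisely the invariant subspace $(S_k')^G$: a $G$-invariant section descends to a section of $\mathscr{L}^{\otimes k}$ on $X(\Gamma)=X(\Gamma')/G$, and $\pi^*\bar t$ vanishes along $D'$ if and only if $\bar t$ vanishes along $D$, since $\mathrm{ord}_{T'}(\pi^*\bar t)=e(T'/\pi(T'))\cdot\mathrm{ord}_{\pi(T')}(\bar t)$ with $e\geq 1$ and every cusp of $X(\Gamma)$ is the image of a cusp of $X(\Gamma')$. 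Second, each local norm $\|\cdot\|_{k,v}'$ is $G$-invariant: at a finite place this holds because $\mathscr{S}_k'$ is a $G$-stable lattice, as each $\tau_j$ lifts uniquely to an automorphism of the integral model over $X(\Gamma)_\Z$ (hence over $\P^1_\Z$), under which $\mathscr{L}'^{\otimes k}=\pi_\Z^*\mathscr{L}^{\otimes k}$ is equivariantly identified with itself; and at $v=\infty$ it holds because on each archimedean component $X(\Gamma'_\sigma)_\C$ the automorphism $\tau_j$ acts through $\Gamma_\sigma/\Gamma'_\sigma$, and since $\Delta$ is a modular form for all of $\Gamma(1)$ this corresponds to the slash action of an element of the normalizing group $\Gamma_\sigma$ on the cusp form $f_\sigma\Delta^k$, under which the Petersson norm is invariant (the hyperbolic volume and the Petersson density being invariant under $\Gamma_\sigma$).

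Granting these, for $t\in S_k'$ set $\mathrm{Tr}(t):=\sum_j t|_{\tau_j}\in (S_k')^G=S_k$. Combining the ultrametric inequality at finite places, the triangle inequality at $\infty$, the $G$-invariance of the $\|\cdot\|_{k,v}'$, and Lemma~\ref{norm-compatibility-lemma} to identify $\|\mathrm{Tr}(t)\|_{k,v}$ with $\|\mathrm{Tr}(t)\|_{k,v}'$, one gets
\[
\lambda_k(\mathrm{Tr}(t))=\lambda_k'(\mathrm{Tr}(t))\geq \lambda_k'(t)-\log n .
\]
Now take $s\in S_k'^a\cap S_k$ and write $s=\sum_i c_i t_i$ with $c_i\in\Q$ and $\lambda_k'(t_i)\geq a$. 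Since $s\in S_k=(S_k')^G$ we have $n s=\mathrm{Tr}(s)=\sum_i c_i\mathrm{Tr}(t_i)$, so $s=\sum_i (c_i/n)\,\mathrm{Tr}(t_i)$ is a $\Q$-linear combination of elements $\mathrm{Tr}(t_i)\in S_k$ each of $\lambda_k$-height $\geq a-\log n$; hence $s\in S_k^{a-\log n}=S_k^{a-\log[\Gamma':\Gamma]}$. Part (b) is identical with $G=\Gal(F/E)$ acting on $S_{F,k}$ via $(\cdot)^{\alpha_j}$, with $(S_{F,k})^G=S_k$, $n=[F:E]$, using Lemma~\ref{norm-compatibility-lemma}(b) and the $\Gal(F/E)$-invariance of the local norms on $S_{F,k}$ (at finite places because $\mathscr{S}_{F,k}$ is a $\Gal(F/E)$-stable $\mc{O}_F$-lattice, and at $\infty$ because $\Gal(F/E)$ merely permutes the embeddings $\tau:F\to\C$, hence the summands of $\sum_\tau\|f^\tau\Delta^k\|_{\mathrm{Pet}}^2$).

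The main obstacle is the bookkeeping behind the two facts above: that $S_k$ is exactly the $G$-fixed subspace of $S_k'$ (resp.\ of $S_{F,k}$), which rests on the compatibility of the cusp-vanishing conditions under $\pi$; and that every local norm is genuinely $G$-invariant, which at finite places requires extending automorphisms to the (a priori arbitrary, but $H^0$-independent) regular integral model, and at the archimedean place requires the invariance of the Petersson inner product on $S_{12k}(\Gamma'_\sigma)_\C$ under the normalizing group $\Gamma_\sigma/\Gamma'_\sigma$. Once these are pinned down, the trace estimate and the passage to $\Q$-spans are routine.
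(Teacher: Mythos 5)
Your proposal is correct and follows essentially the same approach as the paper's proof: the left-hand inclusions come from Lemma~\ref{norm-compatibility-lemma}, and the right-hand inclusions come from averaging over $\Aut(X(\Gamma')/X(\Gamma))$ (resp.\ $\Gal(F/E)$), using $G$-stability of the integral lattice at finite places and invariance of the Petersson norm under the normalizing action at infinity, together with the identity $n s = \mathrm{Tr}(s)$ for $s \in S_k$. The only cosmetic difference is that you bound $\|\mathrm{Tr}(t)\|_{k,\infty}'$ by the triangle inequality while the paper uses a pointwise Cauchy--Schwarz estimate inside the Petersson integral; both yield the same factor $n$, and your version of the decomposition $s=\sum_i (c_i/n)\mathrm{Tr}(t_i)$ makes explicit a step the paper elides with ``from which the conclusion follows.''
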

\begin{proof}
The first containment for both (a) and (b) is the content of lemma \ref{norm-compatibility-lemma}. For the second containment, take any $f \in S_k'^a \cap S_k$ (resp. $s \in S_{F,k}^a \cap S_k$). Then $f = \sum_i g_i$ for $g_i \in S_k'$ with $\lambda_k'(g_i) \geq a$ (resp. $s = \sum_i t_i$ for $t_i \in S_{F,k}$ with $\lambda_{F,k}(t_i) \geq a$). Let $h_i := \sum_j g_i|_{\tau_j}$ (resp. $u_i := \sum_j t_i^{\alpha_j}$). Then we claim that $h_i \in S_k$ with $\lambda_k(h_i) \geq a - \log [\Gamma' : \Gamma]$ (resp. $u_i \in S_k$ with $\lambda_k(u_i) \geq a - \log [F : E]$) for all $i$, from which the conclusion follows. 

First suppose $v = p$ is a finite place. Given any $\Z$-basis $\{b_k\}$ of $\mathscr{M}'_k$, if $g_i = \sum_k \alpha_k b_k$ with $\alpha_k \in \Q$, then $\| g_i \|_{k,p}' = \max\{|\alpha_k|\}_k$. Now since $\tau_j$ extends uniquely to $\Aut(X(\Gamma')_\Z/X(\Gamma)_\Z)$, the pullback map $(\cdot)|_{\tau_j} : M'_k \to M'_k$ restricts to an automorphism of the integral sections $\mathscr{M}'_k$ (see the remarks in \S \ref{integral-models-ss}). Hence $\{b_k|_{\tau_j}\}$ is also a $\Z$-basis of $\mathscr{M}'_k$, from which we conclude that $\|g_i|_{\tau_j}\|'_{k,p} = \|g_i\|'_{k,p}$ for all $\tau_j$. Hence,
\[
\|h_i\|_{k,p} = \|h_i\|'_{k,p} = \left\| \sum_j g_i|_{\tau_j} \right\|'_{k,p} \leq \max_j \|g_i|_{\tau_j} \|'_{k,p} = \|g_i\|'_{k,p}. 
\]
A similar argument shows that $\| u_i \|_{k,p} \leq \| t_i \|_{F,k,p}$.

Next, suppose $v = \infty$. First, we address (a). We have the base change diagram by $\sigma : E \to \C$
\[
\begin{tikzcd}
X(\Gamma'_\sigma)_\C \arrow[r, "\gamma_j"] \arrow[d] & X(\Gamma'_\sigma)_\C \arrow[d] \\
X(\Gamma') \arrow[r, "\tau_j"] & X(\Gamma')
\end{tikzcd}
\]
where the top horizontal map $\gamma_j \in \Aut( X(\Gamma'_\sigma)_\C/X(\Gamma_\sigma)_\C)$ corresponds to the automorphism of the modular curve $X(\Gamma'_\sigma)_\C$ induced by the coset $\gamma_j\Gamma' \in \Gamma/\Gamma'$ for some $\gamma_j \in \Gamma$. Hence for all $i$ and $j$, we have $(g_i|_{\tau_j})^\sigma = g_i^\sigma|_{\gamma_j}$ (where $|_{\gamma_j}$ denotes the usual slash operator for modular forms, which in this case is simply pre-composition by $\gamma_j$). Then we have
\begin{align*}
\|h_i\|_{k, \infty}^2 = \|h_i\|_{k, \infty}'^2 &= \frac{1}{[E : \Q]} \sum_{\sigma : E \to \C} \| h_i^\sigma \Delta^k \|_{\text{Pet}}^2 \\
&\leq \frac{1}{[E : \Q]} \frac{1}{d_{\Gamma'}} \frac{d_{\Gamma'}}{d_\Gamma} \sum_\sigma \int_{\mc{F}_{\Gamma'}} \sum_j \left| ((g_i|_{\tau_j})^{\sigma} \Delta^k) (z) \right|^2  (4\pi y)^{12k} \ \frac{dx dy}{y^2} \\
&= \frac{1}{[E : \Q]} \frac{1}{d_{\Gamma'}} \frac{d_{\Gamma'}}{d_\Gamma} \sum_\sigma \int_{\mc{F}_{\Gamma'}} \sum_j \left| ((g_i^\sigma|_{\gamma_j}) \Delta^k) (z) \right|^2  (4\pi y)^{12k} \ \frac{dx dy}{y^2} \\
&= \frac{1}{[E : \Q]} \frac{d_{\Gamma'}}{d_\Gamma} \sum_j \sum_\sigma \| g_i^\sigma \Delta^k \|_{\text{Pet}}^2 \\
&= \left( \frac{d_{\Gamma'}}{d_\Gamma} \right)^2 \|g_i\|_{k, \infty}'^2. 
\end{align*}
where we use Cauchy-Schwartz inequality in the second line, and the invariance of the Petersson inner product with respect to the slash operator for modular forms in the fourth line. We conclude that 
\[
\lambda_k(h_i) = - \sum_v \log \|h_i\|_{k,v} \geq - \sum_v \log \|g_i\|_{k,v}' - \log \left(\frac{d_{\Gamma'}}{d_\Gamma} \right) = \lambda'_k(g_i) - \log [\Gamma' : \Gamma]. 
\]

Finally, we address (b). For each $\sigma : E \to \C$, fix a lift $\wt{\sigma} : F \to \C$. Then all lifts of $\sigma$ to $F$ are given by $\wt{\sigma} \circ \alpha_j$ as $\alpha_j$ varies over $\Gal(F/E)$. 
\begin{align*}
\|u_i\|_{k, \infty}^2 = \frac{1}{[E : \Q]} \sum_{\sigma : E \to \C} \| u_i^\sigma \Delta^k \|_{\text{Pet}}^2 &= \frac{1}{[E : \Q]} \sum_{\sigma : E \to \C} \| u_i^{\wt{\sigma}} \Delta^k \|_{\text{Pet}}^2 \\
&= \frac{1}{[E : \Q]} \sum_{\sigma : E \to \C} \left \| \sum_j (t_i^{\alpha_j})^{\wt{\sigma}} \Delta^k \right \|_{\text{Pet}}^2 \\
&\leq \frac{[F : E]}{[E : \Q]} \sum_{\sigma : E \to \C} \sum_j \left \| t_i^{\wt{\sigma} \circ \alpha_j} \Delta^k \right \|_{\text{Pet}}^2 \\
&= [F : E]^2 \frac{1}{[F : \Q]} \sum_{\tau : F \to \C} \left \| t_i^{\tau} \Delta^k \right \|_{\text{Pet}}^2 \\
&= [F : E]^2 \|t_i\|_{F,k,\infty}^2.
\end{align*}
Hence, we conclude that
\[
\lambda_k(u_i) \geq \lambda_{F,k}(t_i) - \log[F : E],
\]
as required. 
\end{proof}

\subsection{Proof of theorem \ref{measure-comparison-thm-intro}}
\label{proof-measure-comp}
\begin{proof}[Proof of theorem \ref{measure-comparison-thm-intro}]
Let $E$ and $E_0$ be the fields of constants of $X(\Gamma')$ and $X(\Gamma)$, respectively. Taking global sections of the structure sheaves for the morphism $\pi_{\Gamma', \Gamma} : X(\Gamma') \to X(\Gamma)$ yields an inclusion $E_0 \hookrightarrow E$, and hence an $E$-morphism $X(\Gamma') \to X(\Gamma) \otimes_{E_0} E$ such that its base change to $\C$ is equal to the natural map $X(\Gamma')_\C \to X(\Gamma)_\C$ by assumption. Hence, $\Gamma' \subseteq \Gamma$ is defined over $E$. 

Let $\Gamma''$ be a finite index subgroup of $\Gamma'$ with $\Gamma'' \trianglelefteq \Gamma$. Let $F$ be a finite extension of $E$ that is Galois over $E_0$, and suppose that $\Gamma''$ is defined over $F$, with model $X(\Gamma'')_F$. By extending $F$ if necessary, we may also assume that there exist $F$-morphisms $X(\Gamma'')_F \to X(\Gamma')_F \to X(\Gamma)_F$ with base change to $\C$ equal to the natural maps $X(\Gamma'')_\C \to X(\Gamma')_\C \to X(\Gamma)_\C$, and that the maps $X(\Gamma'')_F \to X(\Gamma')_F$ and $X(\Gamma'')_F \to X(\Gamma)_F$ are Galois covers of curves. Then repeated application of lemma \ref{maxima-comparison-lemma} gives
\begin{align*}
S_{E_0, k}^a \subseteq S_k^a \cap S_{E_0, k} \subseteq S_k'^a \cap S_{E_0,k} &\subseteq S_{F,k}'^a \cap S_{E_0,k} \\
&\subseteq S_{F,k}''^a \cap S_{E_0, k} \subseteq S_{F,k}^{a - \log[\Gamma'' : \Gamma]} \cap S_{E_0, k} \subseteq S_{E_0,k}^{a - \log [\Gamma'' : \Gamma] - \log [F : E_0]}. 
\end{align*}
In particular, we have 
\begin{equation}
\label{maxima-comparison-subspace}
S_{E_0, k}^a \subseteq S_k'^{a} \cap S_{E_0, k} \subseteq S_{E_0,k}^{a - \log [\Gamma'' : \Gamma] - \log [F : E_0]}. 
\end{equation}
Let $\lambda_{E_0, k, i}'$ denote the successive maxima of $S_{E_0, k}$ with respect to the subspace filtration $S_k'^a \cap S_{E_0, k}$, and let 
\[
\nu_{E_0, k}' := \frac{1}{\dim_\Q S_{E_0, k}} \sum_{i=1}^{\dim_\Q S_{E_0, k}} \delta_{\frac{1}{k} \lambda_{E_0, k, i}'}. 
\]
Equation \ref{maxima-comparison-subspace} gives
\[
\lambda_{E_0, k, i} \leq \lambda_{E_0, k, i}' \leq \lambda_{E_0, k, i} + \log [\Gamma'' : \Gamma] + \log [F : E_0]
\]
for all $i = 1, \dots, \dim_\Q S_{E_0, k}$. Consequently, $\nu_{E_0, k}' \to \nu_{E_0}$ weakly. 

Consider now the short exact sequence 
\[
0 \to S_{E_0, k} \to S_k' \to W_k := S_k'/S_{E_0, k} \to 0.
\]
If we equip $W_k$ with the quotient filtration as on page 16 of \cite{Chen-1}, then by proposition 1.2.5 of \cite{Chen-1}, there is a Borel probability measure $\omega_k$ on $\R$ such that 
\[
(\dim_\Q S_k') \cdot \nu_k' = (\dim_\Q S_{E_0, k}) \cdot \nu_{E_0, k}' + (\dim_\Q W_k) \cdot \omega_k,
\]
and hence
\[
\omega_k = \frac{\dim_\Q S_k'}{\dim_\Q W_k} \cdot \nu_k' - \frac{\dim_\Q S_{E_0, k}}{\dim_\Q W_k} \cdot \nu_{E_0, k}'. 
\]
Taking the limit as $k \to \infty$, we get that $\omega_k$ converges weakly to the Borel probability measure 
\[
\omega := \frac{[E : E_0][\Gamma : \Gamma']} {[E : E_0][\Gamma : \Gamma'] - 1} \cdot \nu' - \frac{1} {[E : E_0] [\Gamma : \Gamma'] - 1} \cdot \nu_{E_0}.
\]
Rearranging, we get 
\[
\nu' = \frac{1}{[E : E_0][\Gamma : \Gamma']} \cdot \nu_{E_0} + \left( 1 - \frac{1}{[E : E_0][\Gamma : \Gamma']} \right) \cdot \omega.
\]
Finally, since $[E : E_0][\Gamma : \Gamma'] = \text{deg}(\pi_{\Gamma', \Gamma})$, we get the desired conclusion. 
\end{proof}

\begin{Cor}
\label{unboundedness-general-corollary}
Assume the setup in theorem \ref{main-theorem-statement-intro}. Then the support of the limit measure $\nu$ is bounded above and unbounded below. 
\end{Cor}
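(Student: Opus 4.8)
The plan is to handle the two assertions separately. Boundedness above of the support of $\nu$ has already been proved: it is part of Proposition \ref{uniform-tightness-prop}, being an immediate consequence of the uniform bound $\lambda_{k,i}/k \le C$ of Proposition \ref{height-upper-bound-prop}. So the only new content is that the support is unbounded below, and --- since the intersection-theoretic estimates available to us are too crude to yield this directly --- I would obtain it by transporting the corresponding statement for $\Gamma(1)$ from \cite{TGS} through the measure-comparison Theorem \ref{measure-comparison-thm-intro}.

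Concretely, I would apply Theorem \ref{measure-comparison-thm-intro} with $\Gamma' = \Gamma$ the given group (with its arithmetic surface $\mathscr{X}(\Gamma)$, defined over its field of constants $E$) and with the role of the larger group played by $\Gamma(1)$, equipped with the arithmetic surface $\mathscr{X}(\Gamma(1)) = \P^1_\Z$ (the normalization of $\P^1_\Z$ in $X(\Gamma(1)) = \P^1_\Q$ is already regular, hence is an arithmetic surface associated to $\Gamma(1)$). The required $\Q$-morphism $X(\Gamma) \to X(\Gamma(1)) = \P^1_\Q$ is the map $\pi_\Gamma$ of \S\ref{integral-models-ss}, whose base change to $\C$ is, componentwise, the natural map $X(\Gamma_\sigma)_\C \to \P^1_\C$ of complex modular curves, so the hypotheses of Theorem \ref{measure-comparison-thm-intro} hold. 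The lattice $\mathscr{S}_k$ of integral cusp forms for $\Gamma(1)$ of weight $12k$ attached to $\P^1_\Z$ is exactly the one studied in \cite{TGS}, so by theorem 3.2.2(i) of \cite{TGS} the corresponding measures converge weakly to a Borel probability measure $\nu_{\Gamma(1)}$, and by part (ii) of that theorem the support of $\nu_{\Gamma(1)}$ is unbounded below. Theorem \ref{measure-comparison-thm-intro} then produces a Borel probability measure $\omega$ with
\[
\nu \;=\; \frac{1}{\deg(\pi_\Gamma)}\,\nu_{\Gamma(1)} \;+\; \Bigl(1 - \frac{1}{\deg(\pi_\Gamma)}\Bigr)\,\omega ,
\]
where $\deg(\pi_\Gamma) = [E:\Q]\,[\Gamma(1):\Gamma] \ge 1$. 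Since $\omega \ge 0$, for every $a \in \R$ we obtain $\nu\bigl((-\infty,-a)\bigr) \ge \deg(\pi_\Gamma)^{-1}\,\nu_{\Gamma(1)}\bigl((-\infty,-a)\bigr) > 0$, the last inequality because $\nu_{\Gamma(1)}$ has support unbounded below; hence the support of $\nu$ is unbounded below, which together with the first paragraph finishes the proof.

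The steps above are essentially formal; the only places needing care --- what I would regard as the main (though modest) obstacle --- are two bookkeeping checks. First, one must verify that $\pi_\Gamma$ genuinely qualifies as a ``$\Q$-morphism that is a model over $\Q$ of the natural map $X(\Gamma)_\C \to X(\Gamma(1))_\C$'' in the exact sense demanded by Theorem \ref{measure-comparison-thm-intro}, paying attention to the case $E \ne \Q$, where $X(\Gamma) \otimes_\Q \C$ is a disjoint union of complex modular curves; this is implicit in the discussion opening the proof of Theorem \ref{measure-comparison-thm-intro} in \S\ref{proof-measure-comp}. Second, one should confirm that the normalized Petersson norm of \S\ref{rational-Pet-met-ss} specializes, for $\Gamma(1)$ with $E = \Q$, to the norm used in \cite{TGS} up to a factor bounded by an exponential in $k$ with bounded exponent per unit weight; any such discrepancy translates every $\lambda_{k,i}/k$ by a bounded constant and so affects neither the existence of the limits nor the comparison identity nor the conclusion about unboundedness.
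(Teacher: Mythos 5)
Your proof is correct and takes essentially the same route as the paper: boundedness above from Propositions \ref{uniform-tightness-prop} and \ref{height-upper-bound-prop}, and unboundedness below by applying Theorem \ref{measure-comparison-thm-intro} to the pair $(\Gamma, \Gamma(1))$ with $\mathscr{X}(\Gamma(1)) = \P_\Z^1$ and then importing unboundedness from theorem 3.2.2(ii) of \cite{TGS}. Your final step --- spelling out that $\omega \ge 0$ gives $\nu\bigl((-\infty,-a)\bigr) \ge \deg(\pi_\Gamma)^{-1}\,\nu_{\Gamma(1),\Q}\bigl((-\infty,-a)\bigr) > 0$ --- is a slightly more explicit version of what the paper compresses into the word ``Consequently,'' and the two bookkeeping checks you flag are reasonable sanity checks the paper leaves implicit, not additional obstacles.
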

\begin{proof}
That the support of $\nu$ is bounded above follows from lemma \ref{height-upper-bound-prop}. The modular curve $X(1)_\C$ for $\Gamma(1)$ has a model $X(1)_\Q$ over $\Q$ that we identify with $\P_\Q^1$ via the $j$-function. We have the morphism $\pi_\Gamma : X(\Gamma) \to \P_\Q^1$ associated to the inclusion $\Gamma \subseteq \Gamma(1)$, and hence we may apply theorem \ref{measure-comparison-thm-intro} to get
\[
\nu = \frac{1}{\text{deg}(\pi_\Gamma)} \cdot \nu_{\Gamma(1), \Q} + \left( 1 - \frac{1}{\text{deg}(\pi_\Gamma)} \right) \cdot \omega,
\]
where $\nu_{\Gamma(1), \Q}$ is the limit measure associated to the successive maxima of the spaces of $\Q$-rational cusp forms of level $\Gamma(1)$ and weight $12k$ as in theorem 3.2.2 of \cite{TGS}, and $\omega$ is a Borel probability measure on $\R$. By part (ii) of the same theorem, the support of $\nu_{\Gamma(1), \Q}$ is not bounded below. Consequently, the support of $\nu$ is unbounded below as well. 
\end{proof}

\bibliographystyle{amsalpha}
\bibliography{references}

\end{document}